\documentclass[12pt,a4paper,reqno]{amsart}
\usepackage[english]{babel}
\usepackage[applemac]{inputenc}
\usepackage[T1]{fontenc}
\usepackage{palatino}
\usepackage{amsmath}
\usepackage{amssymb}
\usepackage{amsthm}
\usepackage{amsfonts}
\usepackage{graphicx}
\usepackage[colorlinks = true, citecolor = black]{hyperref}
\pagestyle{headings}
\author{Tuomas Orponen}
\thanks{The research was partially supported by the Academy of Finland, grant 133264 "Stochastic and harmonic analysis, interactions and applications", and by the Finnish foundation Jenny ja Antti Wihurin rahasto.}
\title[Restricted families of projections in $\R^{3}$]{Hausdorff dimension estimates for restricted families of projections in $\R^{3}$}
\address{Department of Mathematics and Statistics, University of Helsinki, P.O.B. 68, FI-00014 Helsinki, Finland}
\subjclass[2010]{28A80 (Primary); 28A78 (Secondary)}
\email{tuomas.orponen@helsinki.fi}

\newcommand{\R}{\mathbb{R}}
\newcommand{\N}{\mathbb{N}}

\newcommand{\Z}{\mathbb{Z}}

\newcommand{\cG}{\mathcal{G}}
\newcommand{\cE}{\mathcal{E}}

\newcommand{\spt}{\operatorname{spt}}

\newcommand{\Pd}{\dim_{\mathrm{p}}}

\newcommand{\cH}{\mathcal{H}}

\newcommand{\cC}{\mathcal{C}}
\newcommand{\spa}{\operatorname{span}}
\newcommand{\diam}{\operatorname{diam}}

\newcommand{\dist}{\operatorname{dist}}

\numberwithin{equation}{section}

\theoremstyle{plain}
\newtheorem{thm}[equation]{Theorem}
\newtheorem{conjecture}[equation]{Conjecture}
\newtheorem{lemma}[equation]{Lemma}

\newtheorem{proposition}[equation]{Proposition}

\newtheorem{fact}[equation]{Fact}

\theoremstyle{definition}
\newtheorem{definition}[equation]{Definition}

\theoremstyle{remark}
\newtheorem{remark}[equation]{Remark}

\addtolength{\hoffset}{-1.15cm}
\addtolength{\textwidth}{2.3cm}
\addtolength{\voffset}{0.45cm}
\addtolength{\textheight}{-0.9cm}

\begin{document}

\begin{abstract} This paper is concerned with restricted families of projections in $\R^{3}$. Let $K \subset \R^{3}$ be a Borel set with Hausdorff dimension $\dim K = s > 1$. If $\mathcal{G}$ is a smooth and sufficiently well-curved one-dimensional family of two-dimensional subspaces, the main result states that there exists $\sigma(s) > 1$ such that $\dim \pi_{V}(K) \geq \sigma(s)$ for almost all $V \in \mathcal{G}$. A similar result is obtained for some specific families of one-dimensional subspaces. 
\end{abstract}

\maketitle

\tableofcontents

\section{Introduction}

This paper continues a line of research motivated by the question: are there Marstrand-Mattila type projection theorems for restricted families of projections? The original result of J. Marstrand \cite{Mar} and P. Mattila \cite{Mat2} states that if $B \subset \R^{d}$ is an analytic set with Hausdorff dimension $\dim B \leq m$, then $\dim \pi_{V}(B) = \dim B$ for almost all $m$-planes $V \in \mathcal{G}(d,m)$. Here $\pi_{V} \colon \R^{d} \to V$ is the orthogonal projection onto $V$, $\dim$ stands for Hausdorff dimension, and an $m$-plane refers to an $m$-dimensional subspace of $\R^{d}$.

In the 'restricted projections' framework, one chooses a smooth submanifold $\mathcal{G} \subset \mathcal{G}(d,m)$ with $\dim \mathcal{G} < \dim \mathcal{G}(d,m)$ and asks whether $\dim \pi_{V}(B) = \dim B$ for almost all $V \in \mathcal{G}$. To date, several answers are known. First, I mention the results of E. J\"arvenp\"a\"a, M. J\"arvenp\"a\"a, T. Keleti, M. Leikas and F. Ledrappier contained in the papers \cite{JJLL} and \cite{JJK} (the latter of which generalises the theorems in the former). These papers provide a complete answer in the setting where no 'curvature conditions' are placed on $\mathcal{G}$. Indeed, \cite[Theorem 3.2]{JJK} gives an almost sure lower bound for $\dim \pi_{V}(B)$ in terms of $\dim B$ and $\dim \mathcal{G}$. In the typical situation, there exists a number $0 < \sigma < \dim B$, depending on $\dim B$ and $\dim \mathcal{G}$ such that $\dim \pi_{V}(B) \in [\sigma,\dim B]$ for almost every $V \in \mathcal{G}$. Examples in \cite{JJK} show that the lower bounds are sharp.

A natural follow-up question, whether $V \mapsto \dim \pi_{V}(B)$ is almost surely a constant (depending on $B$ and $\mathcal{G}$), was studied by K. F\"assler and the author in \cite{FO1}; positive answers were obtained in some special cases, in particular for the one-dimensional family of planes in $\R^{3}$ containing the $z$-axis. On the other hand, there are some trivial counterexamples, such as the concatenation of the one-dimensional families of planes in $\R^{3}$ containing the $z$-axis and the $x$-axis. 

There is one notable example of a strict submanifold $\mathcal{G} \subset \mathcal{G}(d,m)$, for which it is known that $\dim \pi_{V}(B) = \dim B$ for almost all $V \in \mathcal{G}$, and for all analytic sets $B$ with $\dim B \leq m$. This is the \emph{isotropic Grassmannian} $\mathcal{G} = \mathcal{G}_{h}(d,m)$, a submanifold of $\mathcal{G}(2d,m)$ with positive codimension. The projection theorem for $\mathcal{G}_{h}(d,m)$ is due to Z. Balogh, K. F\"assler, P. Mattila and J. Tyson \cite{BFMT}; a different proof based on the notion of \emph{transversality} was given by R. Hovila \cite{Ho}. 

As mentioned above, the papers \cite{JJLL} and \cite{JJK} do not impose any 'curvature conditions' on the manifold $\mathcal{G}$. In particular, the framework of these papers allows for two counterexamples, which serve well to motivate the definitions below. 
\begin{itemize}
\item[(I)] In the first one, all the $m$-planes in $\mathcal{G}$ are contained in a single non-trivial subspace $W \subset \R^{d}$. Then $\pi_{V}(W^{\perp}) = \{0\}$, for all $V \in \mathcal{G}$, which means that there is no non-trivial dimension conservation result for the projection family $(\pi_{V})_{V \in \mathcal{G}}$. 

\item[(II)] In the second -- and slightly more subtle -- counterexample, the $m$-planes in $\mathcal{G}$ may cover the whole of $\R^{d}$, but they are \emph{co-contained} in a single subspace $W \subset \R^{d}$ with $\dim W \leq m < d$, in the sense that $V^{\perp} \subset W$ for all $V \in \mathcal{G}$. Then $\pi_{V}(W) \subset V \cap W$ for all $V \in \mathcal{G}$. (To see this, pick $w \in W$, and write $w = \pi_{V}(w) + v^{\perp}$ with $v^{\perp} \in V^{\perp} \subset W$. It follows that $\pi_{V}(w) = w - v^{\perp} \in V \cap W$.) In particular, $\dim \pi_{V}(W) < \dim W$ for all $V \in \mathcal{G}$. Since $\dim W \leq m$, this means that $(\pi_{V})_{V \in \mathcal{G}}$ does not satisfy the classical Marstrand-Mattila projection theorem (as formulated in the first paragraph). The simplest case of this type of counter example is the family $\mathcal{G}$ of all planes in $\R^{3}$ containing the $z$-axis.
\end{itemize}

In three dimensions, at least, these are -- essentially -- the only counterexamples known to date. Informally speaking, one could conjecture that any (smooth) one-dimensional family of one- or two-planes, no "large part" of which is contained or co-contained in a single non-trivial subspace, should satisfy the Marstrand-Mattila projection theorem. 

To formulate the hypothesis of the conjecture in precise terms, K. F\"assler and the author proposed in \cite{FO2} the following curvature condition for one-dimensional families of one- and two-planes in $\R^{3}$.
\begin{definition}[Non-degenerate families] Assume that $J \subset \R$ is an open interval, and $\gamma \colon J \to S^{2}$ is a $\cC^{3}$-curve satisfying
\begin{equation}\label{curvature} \spa\{\gamma(\theta),\dot{\gamma}(\theta),\ddot{\gamma}(\theta)\} = \R^{3}, \qquad \theta \in J. \end{equation}
Write $\ell_{\theta} := \spa(\gamma(\theta)) \in \mathcal{G}(3,1)$, and $V_{\theta} := \ell_{\theta}^{\perp} \in \mathcal{G}(3,2)$. Then, the families $\{\ell_{\theta}\}_{\theta \in J}$ and $\{V_{\theta}\}_{\theta \in J}$ are referred to as \emph{non-degenerate families of lines} and \emph{planes}, respectively. The orthogonal projections onto $\ell_{\theta}$ and $V_{\theta}$ are denoted by 
\begin{displaymath} \rho_{\theta} := \pi_{\ell_{\theta}} \quad \text{and} \quad \pi_{\theta} := \pi_{V_{\theta}}, \end{displaymath}
and the families $\{\rho_{\theta}\}_{\theta \in J}$ and $\{\pi_{\theta}\}_{\theta \in J}$ are called \emph{non-degenerate families of projections}.
\end{definition}
In the sequel, $\rho_{\theta}$ and $\pi_{\theta}$ will always refer to members of non-degenerate families of projections. Classical techniques, dating back as far as Kaufman's work \cite{Ka} in 1968, can be used to show that the lower bounds in \cite{JJLL} and \cite{JJK} (obtained without any curvature assumptions) are no longer sharp for the projections $\rho_{\theta}$. In \cite{FO2}, we verified the following proposition:
\begin{proposition}[Proposition 1.4 in \cite{FO2}]\label{prop1} If $B \subset \R^{3}$ is an analytic set, then
\begin{displaymath} \dim \rho_{\theta}(B) \geq \min\left\{\dim B, \frac{1}{2}\right\} \quad \text{and} \quad \dim \pi_{\theta}(B) \geq \min\{\dim B,1\} \quad \text{for a.e. } \theta \in J. \end{displaymath}
\end{proposition}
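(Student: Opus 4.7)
The natural approach is Kaufman's energy method. Fix $t < \min\{\dim B, 1/2\}$ for $\rho_{\theta}$, respectively $t < \min\{\dim B, 1\}$ for $\pi_{\theta}$. Since $B$ is analytic, Frostman's lemma produces a compactly supported Borel probability measure $\mu$ on $B$ with finite $t$-energy $I_{t}(\mu) := \iint |x-y|^{-t}\,d\mu(x)\,d\mu(y) < \infty$. It suffices to establish, for each compact subinterval $J' \subset J$,
$$\int_{J'} I_{t}(\rho_{\theta}\mu)\,d\theta < \infty \qquad \text{and} \qquad \int_{J'} I_{t}(\pi_{\theta}\mu)\,d\theta < \infty,$$
since these force $I_{t}(\rho_{\theta}\mu), I_{t}(\pi_{\theta}\mu) < \infty$ for a.e.\ $\theta \in J'$, yielding the claimed lower bounds for $\dim \rho_{\theta}(B)$ and $\dim \pi_{\theta}(B)$.

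Using $\rho_{\theta}(x-y) = \langle x-y, \gamma(\theta)\rangle$ and $|\pi_{\theta}(x-y)|^{2} = |x-y|^{2}(1 - \langle z, \gamma(\theta)\rangle^{2})$ with $z := (x-y)/|x-y|$, Fubini reduces both integrals, up to the finite factor $I_{t}(\mu)$, to the uniform kernel estimates
$$\sup_{z \in S^{2}} \int_{J'} |\langle z, \gamma(\theta)\rangle|^{-t}\,d\theta < \infty \quad (t < 1/2), \qquad \sup_{z \in S^{2}} \int_{J'}\bigl(1 - \langle z, \gamma(\theta)\rangle^{2}\bigr)^{-t/2}\,d\theta < \infty \quad (t < 1).$$
Set $f_{z}(\theta) := \langle z, \gamma(\theta)\rangle$. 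Condition \eqref{curvature} forbids the simultaneous vanishing of $f_{z}, f_{z}', f_{z}''$, so by compactness of $J' \times S^{2}$ one has $\max\{|f_{z}|, |f_{z}'|, |f_{z}''|\} \geq c > 0$ pointwise. A standard sublevel-set / van der Corput argument, subdividing $J'$ according to which of the three derivatives dominates and applying $|\{|f^{(k)}| \geq \lambda,\, |f| \leq \epsilon\}| \lesssim (\epsilon/\lambda)^{1/k}$, yields $|\{\theta \in J' : |f_{z}(\theta)| \leq \epsilon\}| \lesssim \sqrt{\epsilon}$ uniformly in $z$; a layer-cake integration gives the first bound for every $t < 1/2$. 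For the second, set $g_{z}(\theta) := 1 - f_{z}(\theta)^{2}$: differentiating $|\gamma|^{2} \equiv 1$ forces $f_{z}'(\theta_{0}) = 0$ at any zero $\theta_{0}$ of $g_{z}$, and a short computation gives $g_{z}''(\theta_{0}) = 2|\dot\gamma(\theta_{0})|^{2}$, bounded below by non-degeneracy. Thus $g_{z}$ has only quadratic zeros with uniform leading coefficient, and $g_{z}^{-t/2}$ is integrable for every $t < 1$.

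The main obstacle is making the kernel bounds \emph{uniform} in $z \in S^{2}$; pointwise the arguments are elementary, but the whole scheme collapses if the constants degenerate as $z$ varies. Uniformity is supplied by compactness of $J' \times S^{2}$ together with the quantitative content of \eqref{curvature}: the function $(\theta,z) \mapsto \max\{|f_{z}(\theta)|,|f_{z}'(\theta)|,|f_{z}''(\theta)|\}$ is continuous and strictly positive on $J' \times S^{2}$, hence uniformly bounded below. A Rolle-type count also bounds the number of zeros of $f_{z}$ and $g_{z}$ in $J'$ independently of $z$, so that contributions from distinct zeros sum to a $z$-uniform constant. Once this is in place, multiplication by $I_{t}(\mu) < \infty$ closes the argument.
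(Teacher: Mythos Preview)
Your argument is correct and is essentially the approach taken in \cite{FO2} (the paper itself does not reprove Proposition~\ref{prop1} but defers to \cite[\S3.1]{FO2}): Kaufman's energy method reduced to the sublevel-set estimates $|\{\theta : |\rho_{\theta}(\xi)| \leq \epsilon\}| \lesssim \epsilon^{1/2}$ and $|\{\theta : |\pi_{\theta}(\xi)| \leq \epsilon\}| \lesssim \epsilon$ for $\xi \in S^{2}$, each following from the observation that the relevant functions have at most second-order zeros (cf.\ Lemmas~\ref{lemma1} and~\ref{universal} in the present paper, which cite \cite[(3.6), (3.9)]{FO2}). Your treatment of uniformity in $z$ via compactness of $J' \times S^{2}$ is exactly the right closing step.
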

The lower bound for $\dim \pi_{\theta}(B)$ holds without the curvature condition \eqref{curvature} and was already established in \cite{JJLL}. In contrast, the bounds in \cite{JJLL} and \cite{JJK} give no information about $\dim \rho_{\theta}(B)$ in this situation (at least in case $\dim B \leq 1$) -- the reason being example (I) above. But even if Proposition \ref{prop1} improves on \cite{JJLL} and \cite{JJK} under the curvature hypothesis \eqref{curvature}, there is no longer reason to believe that the bounds $\min\{\dim B,1/2\}$ and $\min\{\dim B,1\}$ are sharp. The guess that they are not is the content of the following conjecture, formalising the discussion above:
\begin{conjecture} If $B \subset \R^{3}$ is an analytic set, then
\begin{displaymath} \dim \rho_{\theta}(B) = \min\{\dim B, 1\} \quad \text{and} \quad \dim \pi_{\theta}(B) = \min\{\dim B,2\} \quad \text{for a.e. } \theta \in J. \end{displaymath}
\end{conjecture}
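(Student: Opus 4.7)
My plan is to combine the standard energy-integral reduction with Fourier-analytic restriction/decoupling estimates for the cone generated by $\gamma$. By potential theory, it suffices to show that for every Frostman measure $\mu$ on $B$ with exponent $s$ close to $\dim B$, one has
\begin{equation*}
\int_J I_t(\rho_{\theta\ast}\mu)\, d\theta < \infty \quad \text{and} \quad \int_J I_t(\pi_{\theta\ast}\mu)\, d\theta < \infty
\end{equation*}
for every $t < \min\{s,1\}$ and $t < \min\{s,2\}$, respectively. Writing $I_t(\rho_{\theta\ast}\mu)$ via Plancherel as a weighted $L^2$-integral of $\hat\mu$ along the ray $\R\gamma(\theta)$, and then integrating in $\theta$, converts the problem into a weighted $L^2$-estimate for $\hat\mu$ on the cone $C_\gamma := \{r\gamma(\theta) : r \in \R,\ \theta \in J\}$. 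The non-degeneracy condition \eqref{curvature} is exactly the statement that $C_\gamma$ is a $\cC^{2}$-surface of nonvanishing Gaussian curvature away from the origin. The Stein--Tomas $L^2$-restriction theorem for $C_\gamma$ recovers Proposition \ref{prop1}, so the whole challenge is to push beyond the $L^2$-threshold.

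For the line family $\{\rho_\theta\}$ I would pursue a refined decoupling strategy. Discretising $\mu$ at a scale $\delta>0$ and applying Bourgain--Demeter $\ell^{2}$-decoupling to $C_\gamma$, one writes $\hat\mu$ as a sum of pieces localised to $\delta^{1/2}$-caps on the cone; the decoupling inequality distributes the $L^{p}$-mass optimally between these caps. Combining this with a broad/narrow or polynomial partitioning induction on scales, in the spirit of Guth and of the Guth--Iosevich--Ou--Wang treatment of the planar Falconer conjecture, one should obtain the sharp $L^{p}$-input needed to replace Stein--Tomas and reach every $t<1$. A parallel, more combinatorial approach would discretise the question as an incidence problem between $\delta$-separated points of $B$ and $\delta$-tubes pointing in the directions $\gamma(\theta)$; by \eqref{curvature} any two such tubes with distinct parameters meet transversally, and a Guth--Katz type incidence bound (or a Wolff-style hairbrush estimate) should deliver the conjectured exponent.

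For the plane family $\{\pi_\theta\}$, I would first exploit the identity $|\pi_\theta(x)-\pi_\theta(y)|^{2} = |x-y|^{2} - \langle x-y,\gamma(\theta)\rangle^{2}$ to reduce to understanding how often the direction $(x-y)/|x-y|$ lies close to the curve $\pm\gamma$. The pointwise bound $\int_{J}(1-\langle u,\gamma(\theta)\rangle^{2})^{-t/2}\, d\theta<\infty$ holds only for $t<1$, which is what forces the $\min\{\dim B,1\}$ threshold in Proposition \ref{prop1}. To reach $t<2$ one must use more than this pointwise average: an $L^{p}$-averaged Kakeya-type estimate controlling how $\mu$ can concentrate in thin neighbourhoods of the planes $V_{\theta_{0}}$, combined with the $C_\gamma$-decoupling used above, ought to raise the threshold. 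The natural framework is Guth's multilinear restriction + polynomial partitioning machinery, applied to the extension operator associated with $C_\gamma$ and refined so as to see the Frostman geometry of $\mu$.

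The main obstacle, in both cases but sharply for $\{\pi_\theta\}$, is that every currently available argument in this line of work is essentially $L^{2}$-based and hits a hard wall at the exponents of Proposition \ref{prop1}. Making the planar conjecture go through requires an honest $L^{p}$-restriction input for $C_\gamma$ well beyond Stein--Tomas, together with an incidence-geometric step controlling how $\mu$-mass concentrates along the rulings of $C_\gamma$. I expect the line case $\dim \rho_\theta(B)=\min\{\dim B,1\}$ to be within reach of the decoupling/incidence machinery once it is adapted to the curve $\gamma$. The plane case, however, is genuinely at the frontier: it likely demands a polynomial partitioning argument tailored to $\gamma$, and plausibly awaits ideas beyond those developed in this paper.
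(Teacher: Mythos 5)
The statement you were asked to prove is labelled a \emph{Conjecture} in the paper, and it is presented as open: the paper does not prove it. What the paper actually establishes are the weaker Theorems \ref{main} and \ref{main2}, namely that $\dim\pi_\theta(B)\geq\sigma(s)>1$ almost surely when $\dim B=s>1$, and an analogous bound $\dim\rho_\theta(B)\geq\tilde\sigma(s)>1/2$ for the specific curve \eqref{specialCurve}. There is therefore no ``paper proof'' to compare against.

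Your submission is not a proof either, and you say so yourself: the text is a strategy memo built from phrases like ``one should obtain,'' ``ought to raise the threshold,'' ``I expect,'' and you close by conceding that the plane case ``plausibly awaits ideas beyond those developed in this paper.'' That concession is exactly right and is the reason the submission cannot be accepted as a proof of the statement. Two more concrete issues in the sketch. First, $C_\gamma$ is a ruled cone, so it has a vanishing principal curvature along each ruling; the non-degeneracy condition \eqref{curvature} gives nonvanishing curvature only in the direction transverse to the rulings. Consequently the relevant restriction/decoupling theory is that of the light cone (Stein--Tomas exponent $6$ in $\R^3$, Bourgain--Demeter cone decoupling), not that of a positively curved surface; your phrase ``nonvanishing Gaussian curvature'' misstates this and would, if taken literally, steer one toward the wrong exponents. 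Second, the claim that any two $\delta$-tubes with distinct parameters ``meet transversally'' is not true near the cone tip: two lines through the origin making angle $\alpha$ have $\delta$-neighbourhoods whose intersection has length about $\delta/\alpha$, which degenerates as $\alpha\to0$; the incidence/hairbrush step has to account for this. These are precisely the places where a rigorous argument has to do real work, and the sketch does not. It is also worth noting that the approach you outline (Fourier restriction for $C_\gamma$) is in the spirit of Oberlin--Oberlin \cite{OO}, whereas this paper's own partial results are proved by a purely combinatorial energy argument hinging on the two-cones and three-cones lemmas in the appendices; the two methods are genuinely different, and neither, as of this paper, reaches the conjectured endpoints.
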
 
The main results in \cite{FO2} were the verification of the first part of this conjecture for self-similar sets in $\R^{3}$ without rotations, and a slight improvement over the $\min\{\dim B,1/2\}$ and $\min\{\dim B,1\}$ bounds for \emph{packing dimension} $\Pd$:
\begin{thm}[Theorem 1.6 in \cite{FO2}]\label{packingDimension} Assume that $B \subset \R^{3}$ is an analytic set with $\dim B = s$. If $s > 1/2$, there exists a constant $\sigma_{1}(s) > 1/2$ such that
\begin{displaymath} \Pd \rho_{\theta}(B) \geq \sigma_{1}(s) \quad \text{for a.e. } \theta \in J. \end{displaymath} 
If $s > 1$, there exists a constant $\sigma_{2}(s) > 1$ such that
\begin{displaymath} \Pd \pi_{\theta}(B) \geq \sigma_{2}(s) \quad \text{for a.e. } \theta \in J. \end{displaymath}
\end{thm}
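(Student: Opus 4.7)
The plan is to address both statements with the same blueprint; I describe the line case $\rho_{\theta}$ in detail, as the plane case is formally analogous. The first step is the usual Frostman reduction: fix $t < s$ close to $s$ and a compactly supported probability measure $\mu$ on $B$ with $\mu(B(x,r)) \lesssim r^{t}$. The classical lower bound $\dim \rho_{\theta}(B) \geq 1/2$ in Proposition \ref{prop1} comes from a Kaufman-type estimate that, under the curvature hypothesis $\spa\{\gamma(\theta),\dot\gamma(\theta),\ddot\gamma(\theta)\} = \R^{3}$ and the van der Corput lemma, takes the form
\begin{displaymath}
\int_{J} \mathbf{1}_{\{|\rho_{\theta}(x) - \rho_{\theta}(y)| \leq r\}} \, d\theta \lesssim \min\Big\{1, \frac{r^{1/2}}{|x - y|^{1/2}}\Big\}, \qquad x, y \in \R^{3}.
\end{displaymath}
Integrated against $\mu\times\mu$, this gives finite $\sigma$-energy of $\rho_{\theta}\mu$ for a.e.\ $\theta$ whenever $\sigma < 1/2$, hence the Hausdorff bound. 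The sharp obstacle is the exponent $1/2$, which is saturated by point pairs with $|x-y| \asymp r^{1/2}$.

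The route to a packing dimension improvement goes through the characterisation
\begin{displaymath}
\Pd E = \inf\Big\{\sup_{n} \overline{\dim}_{B} E_{n} \;:\; E = \textstyle\bigcup_{n} E_{n}\Big\}.
\end{displaymath}
It suffices to exhibit $\eta = \eta(s) > 0$ and a positive-measure set $J' \subset J$ such that, for every countable Borel decomposition $B = \bigcup_{n} B_{n}$ and every $\theta \in J'$, there is some $n$ with $\overline{\dim}_{B} \rho_{\theta}(B_{n}) \geq 1/2 + \eta$. Equivalently, I need a \emph{single-scale, discretised} version of the Kaufman bound that gains a definite increment over $1/2$ as soon as the underlying measure has Frostman exponent exceeding $1/2$. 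The plan is to dyadically decompose $\R^{3}$ at scale $r$ and, using $\mu(B(x,r)) \lesssim r^{t}$ with $t > 1/2$, show that the $(x,y)$-pairs for which the Kaufman bound is attained with equality, namely those at distance $\asymp r^{1/2}$, carry $\mu\times\mu$-mass only $\lesssim r^{\alpha(t)}$ for some $\alpha(t) > t$. This provides a quantitative improvement that, combined with dyadic pigeonholing over scales and a Borel--Cantelli argument in $\theta$, produces a positive-measure set $J'$ along which the minimal $r$-covering number of $\rho_{\theta}(K)$ satisfies $N(\rho_{\theta}(K), r_{k}) \gtrsim r_{k}^{-(1/2 + \eta)}$ along a sequence of scales $r_{k} \to 0$, for \emph{every} compact $K \subset B$ with $\mu(K) > 0$. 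The uniformity in $K$ is precisely what allows passage to a general decomposition: choose the piece $B_{n}$ of largest $\mu$-mass and apply the bound to $K = B_{n}$.

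For the two-plane case, the argument runs in parallel with target exponent $1$ in place of $1/2$, replacing the above inequality by the two-dimensional Kaufman-type bound $\int_{J} \mathbf{1}_{\{|\pi_{\theta}(x) - \pi_{\theta}(y)| \leq r\}} \, d\theta \lesssim \min\{1, r/|x-y|\}$, which is again a consequence of \eqref{curvature} via stationary phase.

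The main obstacle will be producing the effective gain $\eta(s) > 0$ in the discretised estimate: the continuous Kaufman bound is sharp without extra Frostman information beyond exponent $1/2$ (respectively $1$), so any improvement \emph{must} exploit the hypothesis $s > 1/2$ (respectively $s > 1$) in a quantitative, scale-sensitive way, presumably through a non-concentration statement for $\mu$ at the critical scale $r^{1/2}$. A secondary but nontrivial step is the uniformisation that converts an a.e.\ box-counting bound into a statement valid for \emph{all} countable decompositions simultaneously, which is the true bridge between box-counting and packing dimension.
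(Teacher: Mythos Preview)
The paper does not contain a proof of this theorem. Theorem~\ref{packingDimension} is quoted from \cite{FO2} as background and motivation; the present paper's contribution is to upgrade its conclusion from packing to Hausdorff dimension (Theorems~\ref{main} and~\ref{main2}). So there is no ``paper's own proof'' against which to compare your proposal.

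That said, a few comments on the proposal itself. What you have written is an outline rather than a proof, and you are candid about this: the decisive step --- producing the quantitative gain $\eta(s)>0$ from the Frostman hypothesis $t>1/2$ --- is identified as ``the main obstacle'' and left open. The Kaufman-type inequality you quote is sharp on its own, so the entire content of the theorem lives in that unfilled step. Your heuristic that ``pairs at distance $\asymp r^{1/2}$ carry small $\mu\times\mu$-mass'' is not in itself enough: such pairs can easily have mass $\asymp r^{t/2}$, which does not beat the Kaufman bound. One genuinely needs a mechanism explaining why, if the projection is small at scale $r$, then $\mu$ must exhibit additional structure (concentration near a low-dimensional set) that is incompatible with the Frostman condition. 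In \cite{FO2}, and in the Hausdorff-dimension arguments of the present paper, this mechanism is geometric: small projections force $\mu$ to concentrate near (neighbourhoods of) translated conical surfaces, and intersections of such surfaces are essentially one-dimensional, contradicting $s>1$ (or the analogous statement with three cones for $s>1/2$). Your proposal contains no counterpart to this geometric step.

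A secondary issue is the uniformisation you describe: you want a single positive-measure set $J'$ that works simultaneously for \emph{every} compact $K\subset B$ with $\mu(K)>0$. A Borel--Cantelli argument typically yields, for each fixed $K$, a full-measure set of good $\theta$; the order of quantifiers you need is the reverse, and bridging that gap is not automatic. (One standard route is to prove the box-counting bound not for arbitrary $K$ but uniformly for all sets of the form $\{x:\mu(B(x,r))\text{ is not too large}\}$ at dyadic scales, which is a countable family.)
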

The appearance of $\Pd$ in the theorem above was unfortunate, but the method of proof simply did not yield the same conclusion for $\dim$. The first version of the present paper addressed the issue in the special case, where $\rho_{\theta}$ and $\pi_{\theta}$ are obtained from the curve
\begin{equation}\label{specialCurve} \gamma(\theta) = \tfrac{1}{\sqrt{2}}(\cos \theta, \sin \theta, 1), \qquad \theta \in (0,2\pi). \end{equation}
In other words, Theorem \ref{packingDimension} was proven with $\Pd$ replaced by $\dim$, but only for these specific families of projections. The first results for Hausdorff dimension in the general situation were, soon afterwards, obtained by D. and R. Oberlin \cite{OO}. Here is their main result:
\begin{thm}[Theorem 1.1 in \cite{OO}]\label{Oberlin} Let $B \subset \R^{3}$ be an analytic set with $\dim B \geq 1$. Then, for almost every $\theta \in J$, one has the lower bounds
\begin{displaymath} \dim \pi_{\theta}(B) \geq \begin{cases} (3/4) \dim B, & \text{if } 1 \leq \dim B \leq 2,\\ \min\{\dim B - 1/2, 2\}, & \text{if } 2 \leq \dim B \leq 3. \end{cases} \end{displaymath}

\end{thm}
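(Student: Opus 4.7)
The plan is to use the classical Fourier/potential-theoretic method for projection theorems. Fix $s < \dim B$ and, by Frostman's lemma, let $\mu$ be a compactly supported measure on $B$ with $I_s(\mu) \asymp \int_{\R^3} |\hat\mu(\xi)|^2 |\xi|^{s-3}\,d\xi < \infty$. For each $t$ strictly below the claimed lower bound, it suffices to prove
\begin{displaymath}
\int_J I_t(\pi_\theta \mu)\, d\theta < \infty,
\end{displaymath}
which by standard arguments forces $\dim \pi_\theta(B) \geq t$ for a.e.\ $\theta \in J$. Since $\widehat{\pi_\theta \mu}(\eta) = \hat\mu(\eta)$ for $\eta \in V_\theta$, one has $I_t(\pi_\theta \mu) = c_t \int_{V_\theta} |\hat\mu(\eta)|^2 |\eta|^{t-2}\, d\mathcal{H}^2(\eta)$. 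Introducing polar coordinates $\eta = r\omega$ in $V_\theta$ and applying the area formula to the parametrisation $(\theta, \phi) \mapsto \omega(\theta, \phi) \in V_\theta \cap S^2$ of the sphere bundle of the plane family, a direct computation yields
\begin{displaymath}
\int_J I_t(\pi_\theta \mu)\, d\theta \asymp \int_{\R^3} |\hat\mu(\xi)|^2 |\xi|^{t-3}\, W(\xi/|\xi|)\, d\xi, \qquad W(\omega) = \sum_{\theta : \gamma(\theta) \cdot \omega = 0} \frac{1}{|\dot\gamma(\theta) \cdot \omega|}.
\end{displaymath}

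The weight $W$ is the coarea density of the function $\theta \mapsto \gamma(\theta) \cdot \omega$, and it blows up exactly where this function has a double zero, that is on the one-dimensional curve $\mathcal{C} \subset S^2$ of unit vectors perpendicular to both $\gamma(\theta)$ and $\dot\gamma(\theta)$ for some $\theta$ (the binormal indicatrix of $\gamma$, together with its antipode). Condition \eqref{curvature} forces $\ddot\gamma \notin \spa\{\gamma, \dot\gamma\}$, which implies that at every caustic $\omega \in \mathcal{C}$ one has $\ddot\gamma(\theta) \cdot \omega \neq 0$; thus the double zeros are simple folds and $W(\omega) \asymp \dist(\omega, \mathcal{C})^{-1/2}$ near $\mathcal{C}$, while $W$ is bounded away from $\mathcal{C}$. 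If $W$ were bounded on all of $S^2$, the full classical Marstrand bound $\dim \pi_\theta(B) \geq \min(\dim B, 2)$ would follow at once; the entire obstruction, and therefore the content of the theorem, is the inverse square-root singularity of $W$ on $\mathcal{C}$ combined with the possibility that $|\hat\mu|^2$ concentrates near the curved conical surface $\mathcal{C}_0 := \R_{+} \cdot \mathcal{C}$. The same non-degeneracy simultaneously ensures that $\mathcal{C}_0$ is a smooth ruled $2$-surface in $\R^3$ with one non-vanishing principal curvature transverse to its rulings.

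The remaining task is to control the concentration of $|\hat\mu|^2$ near $\mathcal{C}_0$. I would perform a two-scale dyadic decomposition of the frequency variable, by $|\xi| \sim 2^j$ and by angular distance $2^{-k}$ to $\mathcal{C}$, and prove a fractional Stein--Tomas-type extension estimate of the form
\begin{displaymath}
\int_{|\xi| \sim 2^j,\, \dist(\xi/|\xi|, \mathcal{C}) \sim 2^{-k}} |\hat\mu(\xi)|^2\, d\xi \lesssim 2^{j(3-s)}\, 2^{-k\alpha(s)}\, I_s(\mu),
\end{displaymath}
with $\alpha(s) > 0$ obtained from a $TT^{*}$/van der Corput argument for smooth densities on $\mathcal{C}_0$ enabled by the nonzero principal curvature. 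Inserting this bound on the right-hand side of the energy integral, summing geometric series in $j, k$ against the weights $|\xi|^{t-3}$ and $W \sim 2^{k/2}$, and optimising the two scales produces the thresholds $t < 3s/4$ for $s \in [1, 2]$ and $t < s - 1/2$ for $s \in [2, 5/2]$; the cap $\min\{\cdot, 2\}$ is automatic from $\pi_\theta(B) \subset V_\theta \cong \R^2$.

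The main obstacle is establishing the sharp fractional restriction inequality on $\mathcal{C}_0$ with the correct joint dependence on $j$ and $k$ for a measure $\mu$ of finite $s$-energy, rather than for an $L^p$-function. The crossover at $s = 2$ between the regimes $3s/4$ and $s - 1/2$ corresponds precisely to the scale at which the curvature-driven gain from $\mathcal{C}_0$ balances the contribution of those wave packets of $\hat\mu$ that are tangential to $\mathcal{C}_0$; closing this step requires interpolating between the trivial $L^2$ bound and a cone Stein--Tomas estimate, lifted to fractional measures via the Mattila framework.
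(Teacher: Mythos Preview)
This theorem is not proved in the present paper; it is quoted as Theorem~1.1 of \cite{OO}, and the only information the paper gives about its proof is the single sentence ``The proof of Theorem~\ref{Oberlin} is based on a Fourier restriction estimate.'' There is therefore no proof here to compare your attempt against in detail.

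That said, your general framework matches what the paper indicates about the Oberlin--Oberlin method: reduce the averaged energy $\int_J I_t(\pi_\theta\mu)\,d\theta$ to a weighted Fourier integral, identify the blow-up of the weight along the conical surface generated by the binormal directions of $\gamma$, and control the concentration of $|\hat\mu|^{2}$ near that surface via a restriction/extension inequality. Your identification of the singular set and the square-root blow-up from the non-degeneracy hypothesis \eqref{curvature} is correct.

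However, what you have written is a programme, not a proof. The crucial step---the ``fractional Stein--Tomas-type extension estimate'' with the specific decay exponent $\alpha(s)$ that makes the double dyadic sum converge precisely for $t<3s/4$ (when $1\le s\le 2$) and $t<s-1/2$ (when $2\le s\le 5/2$)---is asserted but not established. You yourself flag this as ``the main obstacle'' and describe only in outline how it might be closed (``interpolating between the trivial $L^{2}$ bound and a cone Stein--Tomas estimate, lifted to fractional measures via the Mattila framework''). In \cite{OO} this step is carried out via a concrete restriction theorem for measures of finite energy, and the exponents $3/4$ and $1/2$ emerge from that inequality; without actually proving such an estimate and checking that the resulting numerology yields exactly those thresholds, your argument remains incomplete.
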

The proof of Theorem \ref{Oberlin} is based on a Fourier restriction estimate. One should note that the technique does not seem to yield improvements over the bound $\min\{\dim B,1\}$ bound, when $1 \leq \dim B \leq 4/3$. Such an improvement is the main result of this paper:
\begin{thm}\label{main} Let $B \subset \R^{3}$ be an analytic set with $\dim B = s > 1$. There exists a constant $\sigma(s) > 1$ such that
\begin{displaymath} \dim \pi_{\theta}(B) \geq \sigma(s) \quad \text{for a.e. } \theta \in J. \end{displaymath} 
\end{thm}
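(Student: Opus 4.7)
The plan is to follow the potential-theoretic blueprint used in \cite{FO2} to obtain the packing-dimension version (Theorem \ref{packingDimension}), and to refine it so that it delivers a Hausdorff-dimension bound. I would fix $1 < t < s$ and a compactly supported Frostman probability measure $\mu$ on $B$ with $\mu(B(x,r)) \lesssim r^{s}$, and aim to exhibit $\epsilon = \epsilon(s) > 0$ such that
\begin{equation*}
\int_{J} I_{1+\epsilon}(\pi_{\theta}\mu)\, d\theta \;=\; \iint d\mu(x)\, d\mu(y) \int_{J}\frac{d\theta}{|\pi_{\theta}(x-y)|^{1+\epsilon}} \;<\; \infty,
\end{equation*}
from which $\dim \pi_{\theta}(B) \geq 1+\epsilon$ for a.e.\ $\theta$ follows by the standard Frostman criterion, yielding the theorem with $\sigma(s) = 1 + \epsilon(s)$.

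Writing $|\pi_{\theta}(z)|^{2} = |z|^{2} - \langle z, \gamma(\theta)\rangle^{2}$ and using that $|\dot{\gamma}|$ is bounded below (a consequence of \eqref{curvature}), one has the sharp pointwise bound $|\{\theta \in J : |\pi_{\theta}(z)| \leq \lambda|z|\}| \lesssim \lambda$. A direct dyadic estimate therefore yields
\begin{equation*}
\int_{J}\frac{d\theta}{|\pi_{\theta}(z)|^{1+\epsilon}} \;\lesssim\; |z|^{-(1+\epsilon)}\sum_{\ell \geq 0} 2^{\epsilon \ell},
\end{equation*}
which diverges for every $\epsilon > 0$, so the $L^{2}$-method cannot cross the threshold $t = 1$ using the $\theta$-marginal alone. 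The improvement has to come from the \emph{joint} distribution of $(x-y,\theta)$: the dangerous configurations are precisely those where $(x-y)/|x-y|$ lies within $O(\lambda)$ of $\pm \gamma(\theta)$, which means that $y$ lies in a thin conical tube around the line through $x$ in direction $\gamma(\theta)$. The full force of \eqref{curvature} enters via the cone parametrisation $(\theta, \tau) \mapsto x + \tau\gamma(\theta)$, which is a local diffeomorphism onto its image away from $\tau = 0$ with Jacobian controlled by $|\tau|\cdot|\gamma(\theta)\times\dot{\gamma}(\theta)|$ and whose second-order behaviour is dictated by $\ddot{\gamma}$ being transverse to $\spa\{\gamma,\dot{\gamma}\}$. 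A Fubini step in the $\theta$-variable, combined with a Frostman-type estimate on the $\mu$-mass of these tubes, should convert the divergent sum above into a scale-by-scale incidence count that, for $\epsilon = \epsilon(s)$ sufficiently small, is absorbed into a finite multiple of $I_{t}(\mu)$.

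The main obstacle — and the reason why the first version of this manuscript addressed only the explicit curve \eqref{specialCurve} — is to perform this multi-scale bookkeeping uniformly in $\gamma$, juggling three parameters at once: the radial scale $r = |x-y|$, the angular scale $\lambda = |\pi_{\theta}(x-y)|/r$, and the $\theta$-scale of each cover of the bad locus $\{\theta_{0} : \gamma(\theta_{0}) = \pm (x-y)/|x-y|\}$. The companion one-dimensional projection bound of Proposition \ref{prop1} is a natural ally here, since on the dangerous locus $|\rho_{\theta}(x-y)| \approx |x-y|$, and a non-trivial Frostman exponent in the $\rho_{\theta}$-direction can be traded against the lack of decay in the $\pi_{\theta}$-direction. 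The expected output is a constant $\sigma(s) = 1 + \epsilon(s) > 1$ with $\epsilon(s) \to 0$ as $s \to 1^{+}$, qualitatively matching Theorem \ref{packingDimension}.
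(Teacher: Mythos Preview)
Your proposal is not a proof but a plan, and the plan has a real gap at precisely the point you flag as ``the main obstacle.'' You correctly observe that the naive energy estimate
\begin{displaymath}
\int_{J}\frac{d\theta}{|\pi_{\theta}(z)|^{1+\epsilon}}
\end{displaymath}
diverges for every $\epsilon>0$, so that the $L^{2}$/potential method cannot cross the threshold $1$ using the $\theta$-marginal alone. But the remedy you sketch---``a Fubini step in the $\theta$-variable, combined with a Frostman-type estimate on the $\mu$-mass of these tubes''---does not work without a further idea. The bad set of $\theta$ for a fixed $z=x-y$ corresponds to $y$ lying near the cone $x+C$, and a Frostman measure with exponent $s>1$ can perfectly well put \emph{all} of its mass on a single cone (for instance, $\mu$ supported on $C$ itself and $x=0$). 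Thus no pointwise tube bound of the form $\mu(x+\ell_{\theta}(\delta))\lesssim\delta^{\beta}$ with $\beta>0$ is available uniformly in $x$, and your ``scale-by-scale incidence count'' has nothing to feed on. The suggestion to trade against $\rho_{\theta}$ does not help either: on the bad locus $|\rho_{\theta}(z)|\sim|z|$, which gives no extra decay. This is exactly why the Fourier-restriction approach of \cite{OO} yields nothing beyond $1$ in the range $1<\dim B\leq 4/3$.

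The paper's argument is structurally different and supplies the missing geometric idea. Rather than trying to make an energy integral finite, it argues by contradiction at a single scale: assume $\cH^{\sigma}(\pi_{\theta}(B))\leq 1$ on a positive-measure set of $\theta$, pigeonhole (\`a la Bourgain) to a scale $\delta$ where the projections admit efficient $\delta$-covers, and set up a bilinear ``energy'' $\cE=\iint|\{\theta:x\sim_{\theta}y\}|\,d\mu x\,d\mu y$ that is large by Cauchy--Schwarz. The key step is then to show that if $\sigma$ is close to $1$, a substantial set $G$ of points $y$ has $\mu(y+\cC)\geq\delta^{\kappa}$; from this one extracts \emph{two} well-separated points $x_{1},x_{2}$ with $\mu((x_{1}+\cC)\cap(x_{2}+\cC))\geq\delta^{\kappa}$. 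The decisive input is a \emph{two cones lemma}: the $\delta$-neighbourhoods of two translated copies of the conical surface, with vertices $\geq\delta^{\epsilon}$ apart, intersect in a set that can be covered like a one-dimensional object (roughly $\delta^{-1/2}$ balls of radius $\delta^{1/2}$). Since $\mu$ has Frostman exponent $s>1$, such a set cannot carry mass $\delta^{\kappa}$ for small $\kappa$, and one reaches a contradiction. The passage from ``one cone can be heavy'' to ``two cones cannot both be heavy for the same measure'' is the idea your proposal is missing.
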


For the projections onto lines, the result is analogous but only concerns the specific family arising from the curve \eqref{specialCurve}:
\begin{thm}\label{main2} Assume that $\rho_{\theta}$ is the orthogonal projection onto the line $\ell_{\theta} = \spa(\gamma(\theta))$, where $\gamma$ is the curve from \eqref{specialCurve}. Let $B \subset \R^{3}$ be an analytic set with $\dim B = s > 1/2$. There exists a constant $\tilde{\sigma}(s) > 1/2$ such that
\begin{displaymath} \dim \rho_{\theta}(B) \geq \tilde{\sigma}(s) \quad \text{for a.e. } \theta \in (0,2\pi). \end{displaymath}
\end{thm}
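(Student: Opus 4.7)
The strategy is to mirror the averaged energy / Fourier-analytic approach of Theorem \ref{packingDimension} and \cite{OO}, adapted to the one-dimensional projection family $\rho_\theta$ and exploiting the fact that the specific curve $\gamma$ sweeps out the standard light cone in $\R^{3}$. Fix a Frostman measure $\mu$ on $B$ with $\mu(B(x,r)) \leq r^{u}$ for some $1/2 < u < s$. Since $\rho_\theta$ is rank one with image direction $\gamma(\theta)$, we have $|\rho_\theta(x)| = |\langle x, \gamma(\theta) \rangle|$, hence the one-dimensional Fourier transform satisfies $\widehat{\rho_\theta \mu}(r) = \hat\mu(r\gamma(\theta))$. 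By the usual Fourier formulation of the $t$-energy $I_t$,
\begin{equation*}
    \int_{0}^{2\pi} I_t(\rho_\theta \mu)\,d\theta \;\asymp\; \int_{0}^{2\pi}\!\!\int_{\R} |\hat\mu(r\gamma(\theta))|^{2} |r|^{t-1}\,dr\,d\theta \;\asymp\; \int_{\Gamma} |\hat\mu(\xi)|^{2} |\xi|^{t-2}\,d\sigma(\xi),
\end{equation*}
where $\Gamma = \{\xi \in \R^{3} : \xi_{3}^{2} = \xi_{1}^{2} + \xi_{2}^{2}\}$ is the light cone and $d\sigma$ its natural surface measure. Thus to conclude $\dim \rho_\theta(B) \geq t$ for a.e.\ $\theta$, it suffices to bound the right-hand side.

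The plan is then to control this weighted $L^{2}$-norm of $\hat\mu$ on the cone by the $u$-energy $I_u(\mu) \asymp \int |\hat\mu(\xi)|^{2} |\xi|^{u-3}\,d\xi$, with an exponent $t = \tilde\sigma(s) > 1/2$. A direct $L^\infty$-type bound on $\hat\mu$ combined with polar coordinates in $(r,\theta)$ yields exactly the threshold $t = 1/2$, reproducing the Kaufman-type bound of Proposition \ref{prop1}. Any improvement must exploit the angular curvature of $\Gamma$. Following the dyadic structure of Theorem \ref{packingDimension}, I would localize to annuli $|\xi| \sim 2^{k}$ and apply a Stein--Tomas / Strichartz-type $L^{p}$-restriction estimate to the cone on each annulus, combined with the Frostman condition $\mu(B(x,r)) \leq r^{u}$ with $u > 1/2$, which prevents $\hat\mu$ from concentrating along a single generator of $\Gamma$. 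Summing the annular bounds over dyadic scales via a $\delta$-discretized / pigeonhole argument in the spirit of Peres--Schlag should then produce an exponent $\tilde\sigma(s) > 1/2$.

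The main obstacle is obtaining a cone-restriction inequality robust enough to give a genuine $\varepsilon$-improvement for measures with Frostman exponent just above $1/2$. The critical exponent $1/2$ in Proposition \ref{prop1} is precisely the value one obtains when the curvature of $\Gamma$ is disregarded, and it coincides with the critical Sobolev exponent for trace inequalities on a $2$-surface in $\R^{3}$. The cone has only \emph{one} non-vanishing principal curvature, so its restriction theory is considerably more delicate than the sphere's --- this is the same underlying reason why the Fourier-analytic bounds of Theorem \ref{Oberlin} cannot improve the bound $\min\{\dim B, 1\}$ in the range $1 \leq \dim B \leq 4/3$. A bilinear cone restriction estimate \`a la Wolff, or a direct incidence / counting argument applied to the $\delta$-discretized picture, should yield the required $\varepsilon$-gain, but the quantitative dependence of $\tilde\sigma(s) - 1/2$ on $s - 1/2$ is expected to be small and to require careful multiscale bookkeeping, including separate handling of the cone's apex via the compact support of $\mu$.
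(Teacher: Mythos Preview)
Your proposal is not a proof but a research outline, and at the key step it names the obstacle without overcoming it. You correctly reduce the problem to bounding $\int_{\Gamma}|\hat\mu(\xi)|^{2}|\xi|^{t-2}\,d\sigma(\xi)$ for some $t>1/2$, and you correctly observe that the naive estimate (ignoring curvature) gives exactly $t=1/2$. But then you appeal to ``a Stein--Tomas / Strichartz-type restriction estimate'' or ``a bilinear cone restriction estimate \`a la Wolff'' without specifying which inequality, with which exponents, applied to which object. The cone has only one nonvanishing curvature, and the linear Stein--Tomas range for the cone does not by itself push past the critical exponent when the Frostman index is barely above $1/2$; indeed you note yourself that this is precisely why the Fourier method of Theorem~\ref{Oberlin} gives nothing beyond $\min\{\dim B,1\}$ for the plane projections when $\dim B$ is near $1$. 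The analogous failure occurs here near $1/2$. Invoking bilinear restriction would require a transversality decomposition and a passage from $L^{p}$ bounds for functions to energy bounds for Frostman measures of low exponent --- none of which is indicated. As written, the proposal stalls at exactly the threshold the theorem is meant to cross.

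The paper's argument is entirely different and avoids Fourier analysis on the cone. It runs by contradiction from a $\delta$-discretized counter-assumption: one defines an incidence energy $\mathcal{E}$ counting pairs $(x,y)$ lying in a common $\delta$-slab $\rho_\theta^{-1}(I)$, bounds $\mathcal{E}$ below by $\approx\delta^{\sigma}$ via Cauchy--Schwarz, and above by splitting according to whether $x-y$ lies near a certain cone $C^{E}$ built from the ``good'' parameters. This forces the existence of \emph{three} well-separated points $x_{1},x_{2},x_{3}$ such that $\mu\big(\bigcap_i (x_i + C^{E}(\delta^\tau))\big)$ is large. A geometric \emph{Three Cones Lemma} then shows that three translated $\delta$-neighbourhoods of the standard cone, with well-separated apices, intersect only near one or two generators of $C$; hence a large portion of $\mu$ sits in a thin tube in direction $b_\theta=\gamma(\theta)\times\dot\gamma(\theta)$ for some good $\theta$. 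The contradiction comes from a preliminary step you did not anticipate: before fixing $\delta$, one restricts to parameters $\theta$ for which the auxiliary \emph{plane} projection $\tilde\pi_\theta$ (kernel $b_\theta$) has bounded $s$-energy, so that any such tube has mass $\lesssim\delta^{s/2}$. This ``good-parameter'' trick and the three-cones geometry are the two ideas your Fourier outline has no counterpart for.
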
 

The manner in which $\sigma(s)$ and $\tilde{\sigma}(s)$ are derived would, in principle, allow for their explicit determination, but I will not pursue this below. Speaking off the record, it seems likely that one could obtain $\sigma(s) = 1 + O((s - 1)^{2})$ for $s$ near $1$, and $\tilde{\sigma}(s) = 1/2 + O((s - 1/2)^{2})$ for $s$ close to $1/2$.

It is worth explaining, why our results do not follow from the work of Y. Peres and W. Schlag \cite{PS} on \emph{generalized projections}. The reason is that a one-dimensional family of orthogonal projections onto lines or planes in $\R^{3}$ can never satisfy the key \emph{transversality} property of generalized projections. For projections onto planes, this is so simply because the notion of transversality (see \cite[Definition 7.1]{PS}) requires the dimension of the target space of the projections, here $2$, to be at most as large as the dimension of the parameter set, here $1$. For projections onto lines, this necessary condition is naturally satisfied: however, if $\{\rho_{\theta}\}_{\theta \in J}$ was a one-dimensional family of orthogonal projections onto lines in $\R^{3}$ satisfying the generalized projections framework (this time, see \cite[Definition 2.7]{PS}), then, as shown in \cite[Remark 2.9]{O}, one could easily devise an "almost bi-Lipschitz" mapping between $\R^{3}$ and $\R^{2}$, which is absurd.

The paper is structured as follows. Section $3$ is concerned with projections onto planes -- namely the proof of Theorem \ref{main} -- whereas Section $4$ treats projections onto lines and contains the proof of Theorem \ref{main2}. The latter half of the paper consists of two appendices, which contain the proofs of certain geometric lemmas required in the earlier sections.

I close the introduction with a word on notation. For technical purposes, it is convenient to view $\rho_{\theta}$ and $\pi_{\theta}$ as mappings from $\R^{3}$ to $\R$ and $\R^{2}$, respectively. Throughout the paper I will write $a \lesssim b$, if $a \leq Cb$ for some constant $C \geq 1$. The two-sided inequality $a \lesssim b \lesssim a$, meaning $a \leq C_{1}b \leq C_{2}a$, is abbreviated to $a \sim b$. Should I wish to emphasise that the implicit constants depend on a parameter $p$, I will write $a \lesssim_{p} b$ and $a \sim_{p} b$. The closed ball in $\R^{d}$ with centre $x$ and radius $r > 0$ will be denoted by $B(x,r)$. For $A \subset \R^{d}$ and $\delta > 0$, I denote by $B(A,\delta) := \{x \in \R^{d} : \dist(x,A) \leq \delta\}$ the closed $\delta$-neighbourhood of $A$. The notation $|E|$ refers to the Lebesgue measure of a set $E \subset \R^{d}$.

\section{Acknowledgements}

I am grateful to Katrin F\"assler for many discussions and comments on the manuscript, and for supplying the proof contained in Remark \ref{parametrisationRemark}. This paper would definitely not exist without our earlier work on the subject. I also wish to thank the anonymous referee for reading the manuscript very carefully and giving many comments, which helped me clarify -- in some cases even simplify -- parts of the text. 

\section{Projections onto planes}

The proofs of Theorems \ref{main} and \ref{main2} have a lot in common, but the former is technically simpler. So, I start there. 

\begin{proof}[Proof of Theorem \ref{main}] Let $B \subset \R^{3}$ be an analytic set with $\dim B = s > 1$. Make a counter assumption: there exists a compact set $E \subset J$ with positive length such that
\begin{equation}\label{counter1} \cH^{\sigma}(\pi_{\theta}(B)) \leq 1 \quad \text{for all } \theta \in E. \end{equation}
The parameter $\sigma \in (1,s)$ will be fixed during the proof; in the end, it will only depend on $s$ how close $\sigma$ has to be chosen to one. Roughly speaking, the plan is to extract structural information about $B$ based on our counter assumption -- and to show that if $\sigma$ is close to one, no $s$-dimensional set can have such structure. 

There will be occasions, when it is required or useful to assume that $J$ is "short enough" for various purposes. Since this can always be done without loss of generality -- by covering $J$ by short subintervals and proving the theorem individually for those -- I will not make further remark about the issue.

The first task is to find small 'bad' scales $\delta > 0$, where the counter assumption \eqref{counter1} has a tractable geometric interpretation. This pigeonholing argument is essentially the same as \cite[p. 222]{Bo} by Bourgain. 

\begin{lemma}\label{cover} Let $A \subset \R^{d}$ be a set with $\cH^{\sigma}(A) \leq 1$. Then, for any $\delta_{0} > 0$, there exist collections of balls $\cG_{k}$, $2^{-k} < \delta_{0}$, with the properties that \emph{(i)} the balls in $\cG_{k}$ have bounded overlap (that is, the sum of their indicators is a bounded function), \emph{(ii)} they have diameter $\sim 2^{-k}$, \emph{(iii)} there are no more than $\lesssim_{d} 2^{k\sigma}$ balls in $\cG_{k}$, and
\begin{equation} A \subset \bigcup_{2^{-k} < \delta_{0}} \bigcup_{B' \in \cG_{k}} B'. \tag{iv} \end{equation}
\end{lemma}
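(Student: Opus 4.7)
The approach will be a pigeonholing on scales applied to a near-optimal Hausdorff $\sigma$-cover of $A$, essentially the argument Bourgain uses on p.\,222 of \cite{Bo}. Since $\cH^{\sigma}(A) \leq 1$, I can fix a cover $\{U_{i}\}_{i \in \N}$ of $A$ by sets of diameter at most $\delta_{0}/4$ with $\sum_{i} \diam(U_{i})^{\sigma} \leq 2$, and after replacing each $U_{i}$ by a ball $B_{i}$ of comparable diameter, I may assume the cover consists of balls. The key observation is that $\sigma$-summability already forces only $\lesssim 2^{k\sigma}$ of these balls to live in the dyadic band of diameters comparable to $2^{-k}$, which is exactly (iii); the remaining properties require only light massaging.

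Concretely, I would set $\mathcal{B}_{k} := \{B_{i} : \diam B_{i} \in (2^{-k-1}, 2^{-k}]\}$ for each $k$ with $2^{-k} < \delta_{0}$. The trivial estimate
\[ \card \mathcal{B}_{k} \cdot 2^{-(k+1)\sigma} \leq \sum_{B \in \mathcal{B}_{k}} \diam(B)^{\sigma} \leq 2 \]
yields (iii), and every ball in $\mathcal{B}_{k}$ already has diameter $\sim 2^{-k}$, which is (ii). The covering property (iv) is inherited from the fact that $\{U_{i}\}$ covers $A$ and each $U_{i}$ is contained in the corresponding $B_{i} \in \mathcal{B}_{k(i)}$, where $k(i)$ is chosen so that $\diam B_{i} \in (2^{-k(i)-1}, 2^{-k(i)}]$; the truncation $\diam U_{i} \leq \delta_{0}/4$ at the outset is exactly to guarantee $2^{-k(i)} < \delta_{0}$.

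To secure the bounded-overlap condition (i) --- the only point that demands a small flourish --- I would route through a dyadic grid: for each $B \in \mathcal{B}_{k}$ choose the $O_{d}(1)$ dyadic cubes of side $2^{-k}$ that meet $B$, then let $\cG_{k}$ consist of the balls circumscribing those cubes. Since dyadic cubes at a fixed scale tile $\R^{d}$, their circumscribing balls overlap with multiplicity bounded by a dimensional constant, giving (i); (iii) is preserved up to a further $O_{d}(1)$ factor, and (ii), (iv) carry over. I do not foresee a genuine obstacle, since the lemma is a standard bookkeeping device; the only step requiring care is the bounded-overlap arrangement, which is precisely why the substitution by dyadic cubes is useful.
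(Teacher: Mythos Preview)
Your proof is correct and follows essentially the same route as the paper: dyadic pigeonholing of a near-optimal $\sigma$-cover to obtain (ii)--(iv), followed by a standard device to secure bounded overlap. The only difference is cosmetic --- the paper uses the $5r$-covering theorem to extract a disjoint subfamily and then dilates by a factor of $5$, whereas you discretise through a dyadic grid; both achieve (i) with only a dimensional loss in (iii).
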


\begin{proof} By the very definition of $\cH^{\sigma}(A) \leq 1$, one may find collections of balls $\cG^{0}_{k}$, $2^{-k} < \delta_{0}$, satisfying conditions (ii)--(iv). In order to have (i), one first uses the $5r$-covering theorem to extract a disjoint subcollection $\cG_{k}^{1} \subset \cG^{0}_{k}$ such that 
\begin{displaymath} \bigcup_{B' \in \cG^{0}_{k}} B' \subset \bigcup_{B' \in \cG_{k}^{1}} 5B'. \end{displaymath}
Now, the collection $\cG_{k} := \{5B' : B' \in \cG_{k}^{1}\}$ satisfies all the requirements. \end{proof}


Fix $\delta_{0} > 0$ and $\theta \in E$. Based on the counter assumption \eqref{counter1} and the lemma above, find collections $\cG_{\theta,k}$, $2^{-k} < \delta_{0}$, of discs in $\R^{2}$ such that the properties (i)--(iv) listed in the lemma are satisfied with $A = \pi_{\theta}(B) \subset \R^{2}$. Without loss of generality (by Frostman's lemma), assume that $B = \spt \mu \subset B(0,1)$, where $\mu$ is a Borel probability measure on $\R^{3}$ satisfying
\begin{displaymath} \mu(B(x,r)) \lesssim r^{s} \quad \text{for } x \in \R^{3} \text{ and } r > 0. \end{displaymath}
Then, Lemma \ref{cover} (iv) implies that
\begin{displaymath} \sum_{2^{-k} < \delta_{0}} \mu(\pi_{\theta}^{-1}(\cup \, \cG_{k,\theta})) \geq 1, \end{displaymath}
where $\cup \, \cG_{k,\theta}$ stands for the union of the discs in $\cG_{k,\theta}$. In particular, there exists $k \in \N$ with $2^{-k} < \delta_{0}$ such that
\begin{equation}\label{form2} \mu\left(\pi_{\theta}^{-1}\left(\cup \, \cG_{k,\theta} \right)\right) \gtrsim k^{-2}. \end{equation}
Since the conclusion holds for every $\theta \in E$, one may further pigeonhole $k \in \N$ so that \eqref{form2} holds for all $\theta \in E_{k} \subset E$, where $|E_{k}| \gtrsim_{|E|} k^{-2}$. For this $k \in \N$, write $\delta := 2^{-k} < \delta_{0}$, $\cG_{\theta} := \cG_{k,\theta}$ and $E_{\delta} := E_{k}$. In the sequel, whenever the text says 'by taking $\delta > 0$ small enough' or something similar, one should understand it as 'first choose $\delta_{0} > 0$ small enough, and then run through the pigeonholing argument above to find $\delta < \delta_{0}$'.

Given $\theta \in [0,2\pi)$ and $x,y \in \R^{3}$, define the relation $x \sim_{\theta} y$ by
\begin{displaymath} x \sim_{\theta} y \quad \Longleftrightarrow \quad x,y \in \pi_{\theta}^{-1}(B) \text{ for some } B \in \cG_{\theta}. \end{displaymath}
So, the condition $x \sim_{\theta} y$ means that $x$ and $y$ share a common '$\delta$-tube' in $\R^{3}$. We now define the energy $\cE$ by
\begin{displaymath} \cE := \int_{0}^{2\pi} \mu \times \mu(\{(x,y) : x \sim_{\theta} y\}) \, d\theta = \iint |\{\theta \in [0,2\pi) : x \sim_{\theta} y\}| \, d\mu x \, d\mu y. \end{displaymath}
The next aim is to bound $\cE$ from below; this will be accomplished using the first expression above. Fix $\theta \in E_{\delta}$. Then \eqref{form2} holds, so there is a collection of $\delta$-tubes $T_{1},\ldots,T_{N}$ of the form $T_{j} = \pi_{\theta}^{-1}(B_{j})$, $B_{j} \in \cG_{\theta}$, such that the total $\mu$-mass of the tubes $T_{j}$ is $\gtrsim (\log 1/\delta)^{-2}$, and $N \lesssim \delta^{-\sigma}$. For each $T_{j}$, one has $T_{j} \times T_{j} \subset \{(x,y) : x \sim_{\theta} y\}$. Using this fact, the bounded overlap of the product sets $T_{j} \times T_{j}$ and the Cauchy-Schwarz inequality, one obtains the following estimate:
\begin{align*} \mu \times \mu(\{(x,y) : x \sim_{\theta} y\}) & \gtrsim \sum_{j = 1}^{N} \, [\mu(T_{j})]^{2}\\
& \geq \frac{1}{N} \left(\sum_{j = 1}^{N} \mu(T_{j}) \right)^{2}\\
& \gtrsim \delta^{\sigma} \cdot \mu\left(\bigcup_{j = 1}^{N} T_{j} \right)^{2}\\
& \gtrsim \delta^{\sigma} \cdot \left(\log \left(\frac{1}{\delta} \right) \right)^{-4}. \end{align*}
Integrating over $\theta \in E_{\delta}$ and recalling that $|E_{\delta}| \gtrsim (\log (1/\delta))^{-2}$ yields
\begin{equation}\label{form3} \cE \gtrsim \delta^{\sigma} \cdot \left(\log \left(\frac{1}{\delta} \right)\right)^{-6}. \end{equation}

The next question is: what structural information about $B = \spt \mu$ does \eqref{form3} provide? Write
\begin{displaymath} \cC := \bigcup_{\theta \in J} B(\ell_{\theta},\delta), \end{displaymath}
where $\ell_{\theta} := \spa(\gamma(\theta)) = \pi_{\theta}^{-1}\{0\}$. Thus, $\cC$ is the closed $\delta$-neighbourhood of a "conical" surface in $C \subset \R^{3}$. This intuition is correct, if the $\gamma(J)$ is contained in a small disc in a single hemisphere of $S^{2}$. This can be assumed without loss of generality, and such an assumption is indeed required a little later.  The rest of the proof runs as follows. If $\delta>0$ is small, one uses \eqref{form3} to find two points $x_{1},x_{2} \in \R^{3}$ such that
\begin{equation}\label{form004} |x_{1} - x_{2}| \geq \delta^{\kappa}, \end{equation}
and
\begin{equation}\label{form4} \mu((x_{1} + \cC) \cap (x_{2} + \cC)) \geq \delta^{\kappa}. \end{equation}
Here $\kappa > 0$ is a number depending on $s$ and $\sigma$ with the crucial property that it can be chosen arbitrarily close to zero by letting $\sigma \searrow 1$. On the other hand, there is Lemma \ref{twoConesLemma} below, stating (informally speaking) that if two conical surfaces -- such as $C$ -- in $\R^{3}$ are well separated, then the intersection of their $\delta$-neighbourhoods behaves like a one-dimensional object. But $\mu$ is a Frostman measure with index $s > 1$, so such objects cannot have so much mass as \eqref{form4} postulates for small $\kappa$. This will, eventually, show that \eqref{form004} and \eqref{form4} are mutually incompatible and conclude the proof.


The hunt for the points $x_{1},x_{2} \in \R^{3}$ begins. First, observe that
\begin{equation}\label{form5} \cE = \int \int_{y + \cC} |\{\theta \in [0,2\pi) : x \sim_{\theta} y\}| \, d\mu x \, d\mu y. \end{equation}
Indeed, if $x \notin y + \cC$, then the distance of $x$ to any of the lines $y+\ell_{\theta}$, $\theta \in [0,2\pi)$, is greater than $\delta$, and consequently $|\pi_{\theta}(x - y)| > \delta$ for all $\theta \in [0,2\pi)$. In particular, $x \not\sim_{\theta} y$ for all $\theta \in [0,2\pi)$. To estimate the integral in \eqref{form5} further, the following universal bound is needed:
\begin{lemma}\label{lemma1} If $x,y \in \R^{3}$ are distinct points, then
\begin{displaymath} |\{\theta \in [0,2\pi) : x \sim_{\theta} y\}| \lesssim \frac{\delta}{|x - y|}. \end{displaymath}
\end{lemma}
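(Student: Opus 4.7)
The plan is to translate the relation $x \sim_\theta y$ into a condition on the unit vector $v := (x-y)/|x-y|$ and then to exploit the non-vanishing of $\dot{\gamma}$.

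First I unpack the relation. By construction, $x \sim_\theta y$ means that $\pi_\theta(x)$ and $\pi_\theta(y)$ lie in a common disc of $\cG_\theta$ of diameter $\sim \delta$, hence, writing $z := x - y$, one has $|\pi_\theta(z)| \lesssim \delta$. Using the formula $\pi_\theta(z) = z - \langle z, \gamma(\theta)\rangle \gamma(\theta)$ and $v = z/|z|$, this reads
\begin{displaymath}
|\pi_\theta(z)|^{2} = |z|^{2}\bigl(1 - \langle v, \gamma(\theta)\rangle^{2}\bigr) = |z|^{2} \sin^{2}(\angle(\gamma(\theta), \pm v)).
\end{displaymath}
Thus $x \sim_\theta y$ forces $\gamma(\theta)$ to lie within spherical distance $\lesssim \delta/|z|$ of $\{v, -v\}$. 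The inequality in the lemma is trivial if $|z| \lesssim \delta$ (since $|J|$ is bounded), so I assume $\delta/|z|$ is small.

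Next, I use the non-degeneracy assumption \eqref{curvature}, which in particular forces $\dot{\gamma}$ to be nowhere zero on $J$. Taking $J$ short enough — as the author already allows himself to do — one secures a uniform lower bound $|\dot{\gamma}(\theta)| \geq c > 0$, together with injectivity of $\gamma|_{J}$, so that $\gamma \colon J \to S^{2}$ is a bi-Lipschitz embedding. Consequently, for any subset $A \subset S^{2}$, the preimage $\gamma^{-1}(A) \subset J$ satisfies
\begin{displaymath}
|\gamma^{-1}(A)| \leq \diam(\gamma^{-1}(A)) \lesssim \diam(A).
\end{displaymath}
Moreover, since $\gamma(J)$ may be assumed to lie in a small disc of a single hemisphere of $S^{2}$, at most one of the antipodal caps around $v$ and $-v$ meets $\gamma(J)$. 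Applying the bi-Lipschitz estimate to that single spherical cap, which has diameter $\lesssim \delta/|z|$, yields the asserted bound.

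The only point that needs a little care is securing the lower bound $|\dot{\gamma}| \geq c > 0$ and injectivity of $\gamma|_{J}$; both follow from $\cC^{3}$-regularity, the non-vanishing of $\dot{\gamma}$, and the freedom to shrink $J$. Once these are in place, the argument is essentially a bi-Lipschitz change of variables — note that the full curvature hypothesis \eqref{curvature} is not actually needed here, only $\dot{\gamma} \neq 0$.
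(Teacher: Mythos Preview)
Your proof is correct and follows essentially the same route as the paper: both reduce to the inclusion $\{\theta : x \sim_\theta y\} \subset \{\theta : |\pi_\theta(x-y)| \lesssim \delta\}$ and then bound the length of this sub-level set. The paper phrases the second step as ``$\theta \mapsto |\pi_\theta(\xi)|^2$ has at most second-order zeros'' and defers the details to \cite{FO2}, which is exactly the bi-Lipschitz property of $\gamma$ (equivalently, $|\dot\gamma|\geq c>0$) that you spell out explicitly.
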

\begin{proof} Observe that
\begin{displaymath} \{\theta \in [0,2\pi) : x \sim_{\theta} y\} \subset \{\theta \in [0,2\pi) : |\pi_{\theta}(x - y)| \leq \delta\}. \end{displaymath}
The length of the set on the right hand side can be estimated by studying the smooth function $\theta \mapsto |\pi_{\theta}(\xi)|^{2}$, $\xi \in S^{2}$. The crucial observation is that this function can have at most second order zeros. The details can be found above \cite[(3.9)]{FO2}.  \end{proof}
Now, in order to estimate the right hand side of \eqref{form5},  define
\begin{displaymath} G := \{y \in \R^{3} : \mu(y + \cC) \geq \delta^{\tau}\}, \end{displaymath}
where $\tau = \kappa/5 > 0$. Write
\begin{align*} \cE = & \int_{G} \int_{y + \cC} |\{ \theta \in [0,2\pi) : x \sim_{\theta} y\}| \, d\mu x \, d\mu y\\
& \quad + \int_{\mathbb{R}^3\setminus G} \int_{y + \cC} |\{ \theta \in [0,2\pi) : x \sim_{\theta} y\}| \, d\mu x \, d\mu y. \end{align*}
The terms will be referred to as $I_{G}$ and $I_{\mathbb{R}^3\setminus G}$. The term $I_{G}$ is estimated using the bound from Lemma \ref{lemma1}, and recalling the uniform bound $\mu(B(x,r)) \lesssim r^{s}$, $s > 1$:
\begin{equation}\label{form6} I_{G} \lesssim \delta \cdot \int_{G} \int \frac{1}{|x - y|} \, d\mu x \, d\mu y \lesssim_{s} \delta \cdot \mu(G).  \end{equation}
In order to estimate $I_{\mathbb{R}^3\setminus G}$, write $A_{j}(y) := \{x \in \R^{3} : 2^{j} \leq |x - y| \leq 2^{j + 1}\}$. For every $j \in \Z$ with $\delta \leq 2^{j} \leq 1$, couple the bound from Lemma \ref{lemma1} with the estimate $\mu((y + \cC) \cap A_{j}(y)) \lesssim \min\{\delta^{\tau},2^{js}\} \leq \delta^{\tau(1 - 1/s)} \cdot 2^{j}$, valid for $y \in \R^{3} \setminus G$.
\begin{align*} I_{\mathbb{R}^3\setminus G} & \lesssim \int_{\mathbb{R}^3\setminus G} \int_{B(y,\delta)} \, d\mu x \, d\mu y\\
& \quad + \int_{\mathbb{R}^3\setminus G} \sum_{\delta \leq 2^{j} \leq 1} \int_{(y + \cC) \cap A_{j}(y)} |\{ \theta \in [0,2\pi) : x \sim_{\theta} y\}| \, d\mu x \, d\mu y\\
& \lesssim \delta^{s} + \delta \cdot \int_{\mathbb{R}^3\setminus G} \sum_{\delta \leq 2^{j} \leq 1} 2^{-j} \cdot \mu((y + \cC) \cap A_{j}(y)) \, d\mu y\\
& \lesssim \delta^{s} + \delta^{1 + \tau(1 - 1/s)} \cdot \log\left(\frac{1}{\delta}\right). \end{align*}
Comparing the upper bounds for $I_{G}$ and $I_{\mathbb{R}^3\setminus G}$ with the lower bound \eqref{form3} results in
\begin{displaymath} \delta^{\sigma} \cdot \left(\log \left(\frac{1}{\delta} \right)\right)^{-6} \lesssim \delta \cdot \mu(G) + \delta^{s} + \delta^{1 + \tau(1 - 1/s)} \cdot \log\left(\frac{1}{\delta}\right). \end{displaymath}
One of the three terms on the right hand side must dominate the left hand side. The middle term clearly can never do that, since $\sigma <s$. Neither can the last term, if one chooses $\sigma < 1 + \tau(1-1/s) < 1 + \tau$. Then, the only possibility remaining is that
\begin{displaymath} \mu(G) \gtrsim \delta^{\sigma - 1} \cdot \left(\log \left(\frac{1}{\delta} \right)\right)^{-6} \gtrsim \delta^{\tau}. \end{displaymath}
In other words, if the counter assumption is strong enough ($\sigma$ is close enough to one), the 'good set' $G$ has relatively large $\mu$ measure. Now, a small technical point: the conical surface $C$ was "two-sided" to begin with, but later it will be easier to deal with just a one-sided versions of $C$ and $\cC$. So, define $\cC^{+} := \cC \cap \{(x,y,h) : h \geq 0\}$ and and note that $\cC \setminus \cC^{+} \subset -\cC^{+}$. Then, the estimate $\mu(G) \geq \delta^{\tau}$ implies that either $\mu(G^{+}) \geq \delta^{\tau}/2$ or $\mu(G^{-}) \geq \delta^{\tau}/2$, where
\begin{displaymath} G^{+} := \{y \in \R^{3} : \mu(y + \cC^{+}) \geq \delta^{\tau}/2\} \quad \text{and} \quad G^{-} := \mu(\{y \in \R^{3} : \mu(y - \cC^{+}) \geq \delta^{\tau}/2\}. \end{displaymath}
Assume, for instance, that $\mu(G^{-}) \geq \delta^{\tau}/2$. This will easily yield the existence of the points $x_{1},x_{2} \in \R^{3}$. First, one uses H\"older's inequality to make the following estimate:
\begin{align*} A & := \iint \mu((x_{1} + \cC^{+}) \cap (x_{2} + \cC^{+})) \, d\mu x_{1} \, d\mu x_{2}\\
& = \iint \int \chi_{x_{1} + \cC^{+}}(y)\chi_{x_{2} + \cC^{+}}(y) \, d\mu y \, d\mu x_{1} \, d\mu x_{2}\\
& = \int \mu(y - \cC^{+})^{2} \, d\mu y \geq \left(\int \mu(y - \cC^{+}) \, d\mu y \right)^{2} \gtrsim \delta^{4\tau}.  \end{align*}
Recall that the aim is to find two points $x_{1},x_{2} \in \spt \mu \subset B(0,1)$ such \eqref{form4} holds -- with $\cC$ replaced by $\cC^{+}$, in fact -- and the mutual distance of the points $x_{i}$ is at least $\delta^{\kappa} = \delta^{5\tau}$. If this cannot be done, then
\begin{displaymath} |x_{1} - x_{2}| \geq \delta^{5\tau} = \delta^{\kappa} \; \Longrightarrow \; \mu((x_{1} + \cC^{+}) \cap (x_{2} + \cC^{+})) < \delta^{5\tau} = \delta^{\kappa} \end{displaymath}
for all $x_{1},x_{2} \in \spt \mu$. Thus,
\begin{displaymath} A \leq \int \int_{B(x_{2},\delta^{5\tau})} 1 \, d\mu x_{1} \, d\mu x_{2} + \iint_{\{|x_{1} - x_{2}| \geq \delta^{5\tau}\}} \delta^{5\tau} \, d\mu x_{1} \, d\mu x_{2} \lesssim \delta^{5s\tau} + \delta^{5\tau}. \end{displaymath}
Since $s > 1$, for small enough $\delta > 0$ this violates the lower bound for $A$ obtained above. The conclusion is that there exist points $x_{1},x_{2},x_{3} \in B(0,1)$ satisfying \eqref{form004} and \eqref{form4} with $\cC$ replaced by $\cC^{+}$. For simplicity of notation, assume that $x_{1} = 0$.

Now, it is time to state the main geometric lemma. The proof is a bit technical, so it is postponed to Appendix \ref{AppendixB}.

\begin{lemma}[Two cones lemma]\label{twoConesLemma} The following holds for small enough $\epsilon > 0$, for all short enough intervals $J \subset \R$ (the precise requirements will be explained in the appendix), for small enough $\delta > 0$, and for $5\epsilon \leq \tau < 1/2$. If $p \in \R^{3}$ is a point with $|p| \geq \delta^{\epsilon}$, then the intersection
\begin{displaymath} \cC^{+} \cap (\cC^{+} + p) \cap B(0,1) \end{displaymath}
can be covered by two balls of diameter $\lesssim \delta^{\epsilon}$, plus either 
\begin{itemize}
\item[(a)] $\lesssim \delta^{-1/2 - 2\tau - R\epsilon}$ balls of diameter $\lesssim \delta^{1/2 - R\epsilon}$, or
\item[(b)]  $\lesssim \delta^{-\tau/4 - R\epsilon}$ balls of diameter $\lesssim \delta^{\tau/4 - R\epsilon}$, 
\end{itemize}
where $R \geq 1$ is an absolute constant.
\end{lemma}


The correct interpretation is that either option (a) or (b) holds depending on $p$ -- and not that one can choose at will between them. Assuming the lemma, the proof of Theorem \ref{main} is completed as follows. Apply the lemma with $\epsilon = \kappa$ and $p = x_{2}$. Since $\mu$ is a measure on $\R^{3}$ with $\mu(B) \leq \diam(B)^{s}$ for all balls $B$ and for some $s > 1$, the lemma shows that
\begin{align} \mu(\cC^{+} \cap (\cC^{+} + x_{2})) & \lesssim \notag \delta^{\kappa s} + \delta^{-1/2 - 2\tau - R\kappa} \cdot \delta^{s(1/2 - R\kappa)} + \delta^{-\tau/4 - R\kappa} \cdot \delta^{s(\tau/4 - R\kappa)}\\
&\label{muEstimate} = \delta^{\kappa s} + \delta^{(s - 1)/2 - 2\tau - R\kappa(s + 1)} + \delta^{\tau(s - 1)/4 - R\kappa(s + 1)}.  \end{align}
for small enough $\delta > 0$, and for any $1/2 > \tau \geq 5\kappa$. It remains to fix the parameters. First choose $\kappa_{0} > 0$ so small that $(s - 1)/2 - 3 \cdot 5\kappa_{0} > 2\kappa_{0}$. Next, pick $\kappa \in (0,\kappa_{0})$ so small that 
\begin{displaymath} (s - 1)/2 - 2 \cdot 5\kappa_{0} - R\kappa(s + 1) \geq (s - 1)/2 - 3 \cdot 5\kappa_{0} > 2\kappa_{0} \end{displaymath}
and
\begin{displaymath} 5\kappa_{0}(s - 1)/4 - R\kappa(s + 1) \geq 5\kappa_{0}(s - 1)/8 \geq 2\kappa. \end{displaymath}
The left hand sides of these inequalities correspond to exponents in \eqref{muEstimate} with $\tau = 5\kappa_{0}$. Finally, since Lemma  \ref{twoConesLemma} states that \eqref{muEstimate} holds for \textbf{any} $\tau \in [5\kappa,1/2)$ under the assumption $|x_{2}| \geq \delta^{\kappa}$, the inequality holds for $\tau := 5\kappa_{0} \geq 5\kappa$ in particular. This leads to $\mu(\cC^{+} \cap (\cC^{+} + x_{2})) \lesssim \delta^{\eta}$ whenever $|x_{2}| \geq \delta^{\kappa}$ where
\begin{displaymath} \eta = \min\{\kappa s,2\kappa_{0},2\kappa\} > \kappa, \end{displaymath} 
contradicting the choice of $x_{2}$ and concluding the proof. \end{proof}

\section{Projections onto lines}

Theorem \ref{main2} is established in this section. As a quick reminder, it concerns the projections $\rho_{\theta} \colon \R^{3} \to \R$ onto the lines $\ell(\theta) := \spa(\gamma(\theta))$, where
\begin{displaymath} \gamma(\theta) = \frac{1}{\sqrt{2}}(\cos \theta, \sin \theta, 1). \end{displaymath} 
The lines $\ell_{\theta}$ foliate the (classical) conical surface $C = \{(x,y,z) : x^{2} + y^{2} = z^{2}\}$. 

On first sight, it appears that an argument of the kind used in the previous section cannot work for the projections $\rho_{\theta} \colon \R^{3} \to \R$. In fact, one can still make a counter assumption -- this time that $\dim \rho_{\theta}(B) \approx 1/2$ for many $\theta \in [0,2\pi)$ -- and use it to find two well-separated copies of $C$ with the property that a large portion of $B$ is contained in the $\delta$-neighbourhoods of \textbf{both} surfaces. This time, however, there is no contradiction: the sets $B$ one is (mainly) interested in have $\dim B \leq 1$, so they can be easily contained in $(C + p) \cap (C + q)$ for some $p \neq q$. 

The first new idea is to use three copies of $C$ instead of two. The difference in the argument is mostly cosmetic, and the upshot is that one finds three well-separated points $p,q,r \in \R^{3}$ such that a large part of $B$ is contained near $p + C, q + C$ and $r + C$ each. Now, the intersection $(p + C) \cap (q + C) \cap (r + C)$ is either empty or contained in a line, which -- after a lengthy geometric argument given in Appendix \ref{AppendixA} -- shows that the intersection of the $\delta$-neighbourhoods of $p + C, q + C$ and $r + C$ is contained in the neighbourhood of a line on $p + C$, with quantitative bounds. Still, there is no contradiction, since $B$ could actually be contained in such a line. It is also worth noting that increasing the number of intersections beyond three gives no new information.

The final new trick is to start the whole proof by asking: if $B$ is fixed, how many parameters $\theta \in [0,2\pi)$ can there be such that a large part of $B$ is contained near $p + \ell_{\theta}$ for some $p$? It feels intuitive that there cannot be many such values of $\theta$, and this is not hard to prove either: roughly speaking, the "bad" parameters $\theta$ have measure zero. After this is established, the counter assumption $\dim \rho_{\theta}(B) \approx 1/2$ must also hold for positively many "good" $\theta$. Finally, one can replace $C$ by
\begin{displaymath} C_{\text{good}} := \bigcup_{\theta \text{ is good}} \ell_{\theta} \end{displaymath}
and run the argument through -- with three copies of $C_{\text{good}}$ -- as outlined above. The conclusion is that a large part of $B$ must be contained near $p + C_{\text{good}}, q + C_{\text{good}}$ and $r + C_{\text{good}}$ each, and then it follows that a large part of $B$ is contained near $p + \ell_{\theta}$ for some "good" $\theta$. This is a contradiction.

\begin{proof}[Proof of Theorem \ref{main2}] For the  "final new trick" described above, one needs to consider the family of projections onto planes $\tilde{\pi}_{\theta} \colon \R^{3} \to \tilde{V}_{\theta}$, $\theta \in [0,2\pi)$, where
\begin{displaymath} \tilde{V}_{\theta} = \spa(b_{\theta})^{\perp}, \end{displaymath}
and $b_{\theta}$ is the line $b_{\theta} = \spa(\gamma(\theta) \times \dot{\gamma}(\theta)) = \spa((\cos \theta, \sin \theta, -1))$. As before, it suffices to prove Theorem \ref{main2} in the case $B = \spt \mu \subset B(0,1)$, where $\mu$ is a Borel probability measure on $\R^{3}$ satisfying
\begin{displaymath} I_{s}(\mu) := \iint \frac{d\mu x \, d\mu y}{|x - y|^{s}} < \infty. \end{displaymath}
and the growth condition $\mu(B(x,r)) \lesssim r^{s}$ for all balls $B(x,r) \subset \R^{3}$. Moreover, one may assume that $1/2 < s < 1$. Under these hypotheses,
\begin{equation}\label{form9} \int_{0}^{2\pi} I_{s}(\tilde{\pi}_{\theta\sharp}\mu) \, d\theta < \infty, \end{equation}
where $\tilde{\pi}_{\theta\sharp}\mu$ is the measure on $\tilde{V}_{\theta}$ defined by $\tilde{\pi}_{\theta\sharp}\mu(A) = \mu(\tilde{\pi}_{\theta}^{-1}(A))$. Indeed, the finiteness of the integral in \eqref{form9} follows from the sub-level set estimate
\begin{displaymath} |\{\theta : |\tilde{\pi}_{\theta}(x)| \leq \lambda\}| \lesssim \lambda, \end{displaymath} 
valid for all $x \in S^{2}$ and all sufficiently small $\lambda > 0$. For more details on how to prove \eqref{form9}, see \cite[\S3.1]{FO2}, in particular \cite[(3.9)]{FO2}. 

From \eqref{form9}, one sees that $|\{\theta : I_{s}(\tilde{\pi}_{\theta\sharp}\mu) \geq C\}| \to 0$ as $C \to \infty$. Combining this fact with a counter assumption to Theorem \ref{main2}, one finds a constant $C > 0$ and a compact positive length set $E \subset [0,2\pi)$ with the properties that 
 \begin{equation}\label{form10} I_{s}(\tilde{\pi}_{\theta\sharp}\mu) \leq C, \qquad \theta \in E, \end{equation}
and
\begin{equation}\label{form012} \cH^{\sigma}(\rho_{\theta}(B)) \leq 1, \qquad \theta \in E. \end{equation}
This time, $\sigma > 1/2$ is a parameter close to $1/2$, to be fixed in the course of the proof. The assumption \eqref{form10} is the "final new trick": it guarantees that tubes perpendicular to the planes $\tilde{V}_{\theta}$ cannot carry too much $\mu$ mass. This is quantified by the following lemma:
 \begin{lemma}\label{littleMass} Let $\nu$ be a probability measure on $\R^{2}$. Then 
 \begin{displaymath} \nu(B) \leq I_{s}(\nu)^{1/2}\diam(B)^{s/2} \end{displaymath}
 for all $\nu$-measurable sets $B \subset \R^{2}$.
 \end{lemma}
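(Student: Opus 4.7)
The plan is a one-step potential-theoretic estimate: restrict the energy integral to $B \times B$ and use the trivial bound $|x - y| \leq d(B)$ on the diagonal block.

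More precisely, I would first observe that for any two points $x,y \in B$ one has $|x-y| \leq d(B)$, hence $|x-y|^{-s} \geq d(B)^{-s}$. Dropping the integrand outside $B \times B$ and applying this pointwise lower bound gives
\begin{displaymath} I_{s}(\nu) = \iint \frac{d\nu x \, d\nu y}{|x - y|^{s}} \geq \int_{B}\int_{B} \frac{d\nu x \, d\nu y}{|x-y|^{s}} \geq d(B)^{-s}\,\nu(B)^{2}. \end{displaymath}
Rearranging yields $\nu(B)^{2} \leq I_{s}(\nu)\,d(B)^{s}$, and taking square roots gives the stated estimate.

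There is essentially no obstacle here — the only subtle point to mention is measurability, which is immediate since $B$ is assumed $\nu$-measurable and the diagonal distance function is continuous, so the product set $B \times B$ is $(\nu \times \nu)$-measurable and the restricted integral makes sense. The hypothesis that $\nu$ is a probability measure plays no role in the bound itself; only the finiteness of $I_{s}(\nu)$ is needed for the conclusion to be non-trivial. The lemma will then be applied to $\nu = \tilde{\pi}_{\theta\sharp}\mu$ on each plane $\tilde{V}_{\theta}$, turning the uniform energy bound \eqref{form10} into a Frostman-type ball estimate with exponent $s/2$ for the projected measure.
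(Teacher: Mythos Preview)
Your proof is correct and is essentially the same as the paper's: the paper rewrites $I_{s}(\nu)$ via the layer-cake formula and then observes that $B \times B \subset \{(x,y) : |x-y|^{-s} \geq \lambda\}$ for all $\lambda \leq d(B)^{-s}$, which is exactly your diagonal-block estimate in slightly different packaging. If anything, your version is the more direct route to the same inequality $I_{s}(\nu) \geq d(B)^{-s}\,\nu(B)^{2}$.
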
 
 
 \begin{proof} Observe that
 \begin{equation}\label{form11} \int_{0}^{\infty} \nu \times \nu(\{(x,y) : |x - y|^{-s} \geq \lambda\}) \, d\lambda = I_{s}(\nu). \end{equation} 
 Now, let $B \subset \R^{2}$ be a $\nu$-measurable set. Then, as long as $x,y \in B$ and $\lambda \leq \diam(B)^{-s}$, one has $|x - y|^{-s} \geq \diam(B)^{-s} \geq \lambda$. This yields the lower bound
 \begin{displaymath} \int_{0}^{\infty} \nu \times \nu(\{(x,y) : |x - y|^{-s} \geq \lambda\}) \, d\lambda \geq \int_{0}^{\diam(B)^{-s}} [\nu(B)]^{2} \, d\lambda = \diam(B)^{-s} \cdot [\nu(B)]^{2}. \end{displaymath}
 A comparison with \eqref{form11} completes the proof.
 \end{proof}
 
It follows from \eqref{form10} and the lemma, that if $T$ is an $\epsilon$-tube perpendicular to a plane $\tilde{V}_{\theta}$, $\theta \in E$, then $\mu(T) \lesssim \epsilon^{s/2}$. However, given the counter assumption \eqref{form012}, and assuming that $\sigma$ is very close to $1/2$, one can extract such tubes $T$ with mass far greater than $\epsilon^{s/2}$. This contradiction will complete the proof in the end.

The search for these 'bad' tubes $T$ begins much like the search for the translated cones $x + \cC$, as seen in the proof of Theorem \ref{main}. The first step is to fix $\delta_{0} > 0$ and find a 'bad' scale $\delta < \delta_{0}$ as before. This process is repeated practically verbatim, so I only state the conclusion. There exists a scale $\delta < \delta_{0}$, a set $E_{\delta} \subset E$, and collections of intervals $\cG_{\theta}$, $\theta \in E_{\delta}$, such that
\begin{itemize}
\item[(i)] for every $\theta \in E_{\delta}$, the collection $\cG_{\theta}$ consists of $\lesssim \delta^{-\sigma}$ intervals with length $\sim \delta$ and bounded overlap,
\item[(ii)] $E_{\delta}$ is compact, and
\begin{displaymath} |E_{\delta}| \gtrsim \left(\log \left(\frac{1}{\delta}\right) \right)^{-2}, \end{displaymath}
\item[(iii)]
\begin{displaymath} \mu\left(\rho_{\theta}^{-1}(\cup \cG_{\theta}) \right) \gtrsim \left(\log \left(\frac{1}{\delta}\right) \right)^{-2} \quad \text{for } \theta \in E_{\delta}. \end{displaymath}
\end{itemize}

The relation $x \sim_{\theta} y$, for $x,y \in \R^{3}$, is defined analogously with the earlier notion:
\begin{displaymath} x \sim_{\theta} y \quad \Longleftrightarrow \quad x,y \in \rho_{\theta}^{-1}(I) \text{ for some } I \in \cG_{\theta}. \end{displaymath}
One also defines the energy $\cE$ almost as before by
\begin{displaymath} \cE := \int_{E_{\delta}} \mu \times \mu(\{(x,y) : x \sim_{\theta} y\}) \, d\theta = \iint |\{\theta \in E_{\delta} : x \sim_{\theta} y\}| \, d\mu x \, d\mu y. \end{displaymath}
The only difference with the earlier notion is that the domain of the $\theta$-integration is restricted to $E_{\delta}$. Following the argument leading to \eqref{form3}, one obtains the familiar lower bound
\begin{equation}\label{form12} \cE \gtrsim \delta^{\sigma} \cdot \left(\log \left(\frac{1}{\delta}\right) \right)^{-6}. \end{equation}
In order to estimate $\cE$ from above, I record the following universal bound:
\begin{lemma}\label{universal} If $x,y \in \R^{3}$ are distinct points, then
\begin{displaymath} |\{\theta \in [0,2\pi) : x \sim_{\theta} y\}| \lesssim \left(\frac{\delta}{|x - y|}\right)^{1/2} \end{displaymath}
\end{lemma}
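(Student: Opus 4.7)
The plan is to reduce Lemma \ref{universal} to a standard sub-level set estimate for a smooth function exhibiting second-order non-degeneracy. First, the relation $x \sim_\theta y$ forces $\rho_\theta(x)$ and $\rho_\theta(y)$ to lie in a common interval of length $\sim \delta$, and since $\gamma(\theta)$ is a unit vector, this gives $|\langle x - y, \gamma(\theta)\rangle| \lesssim \delta$. Normalising $\xi := (x-y)/|x-y| \in S^{2}$ and setting $\lambda := \delta/|x - y|$, it suffices to show
\[ |\{\theta \in [0,2\pi) : |f_\xi(\theta)| \lesssim \lambda\}| \lesssim \lambda^{1/2}, \qquad f_\xi(\theta) := \langle \xi, \gamma(\theta)\rangle, \]
uniformly in $\xi \in S^{2}$.

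The main technical step is a uniform pointwise lower bound $|f_\xi(\theta)| + |f_\xi'(\theta)| + |f_\xi''(\theta)| \gtrsim 1$. Since $f_\xi'(\theta) = \langle \xi, \dot{\gamma}(\theta)\rangle$ and $f_\xi''(\theta) = \langle \xi, \ddot{\gamma}(\theta)\rangle$, this triple equals $G(\theta)(c_1,c_2,c_3)^{T}$, where $(c_1, c_2, c_3)$ are the coordinates of $\xi$ in the frame $\{\gamma(\theta), \dot{\gamma}(\theta), \ddot{\gamma}(\theta)\}$ and $G(\theta)$ is the Gram matrix of that frame. For the explicit curve \eqref{specialCurve} a direct computation gives $\det[\gamma \,|\, \dot{\gamma} \,|\, \ddot{\gamma}] \equiv 1/(2\sqrt{2})$, so the curvature condition \eqref{curvature} holds with constants uniform in $\theta$; consequently $G(\theta)$ is uniformly invertible and $|\xi| = 1$ forces $\max_i |c_i(\theta)| \gtrsim 1$, whence the claimed lower bound.

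The conclusion then follows from the classical sub-level set lemma: any $f \in C^{2}$ satisfying $|f| + |f'| + |f''| \gtrsim 1$ on an interval has $|\{|f| \leq \lambda\}| \lesssim \lambda^{1/2}$. The standard proof splits the interval according to which of the three quantities dominates, applies the inverse function theorem on the region where $|f'|$ or $|f''|$ is large, and notes that the set where $|f| \gtrsim 1$ is disjoint from the sub-level set for small $\lambda$. I expect no substantive obstacle; the exponent $1/2$ (rather than the $1$ of Lemma \ref{lemma1}) is genuinely sharp, as is visible from the special case $\xi = \gamma(\theta_0)$, where $f_\xi(\theta) = \tfrac{1}{2}(\cos(\theta - \theta_0) + 1)$ has a true double zero at $\theta_0 + \pi$. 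This second-order degeneracy is precisely the reason the line-projection theorem has threshold $1/2$ rather than $1$, and it is the only place where the curvature assumption \eqref{curvature} is essentially used.
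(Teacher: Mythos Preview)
Your proposal is correct and follows essentially the same route as the paper: reduce $x \sim_{\theta} y$ to the sub-level set $\{\theta : |\rho_{\theta}(x - y)| \leq \delta\}$, normalise, and use that the function $\theta \mapsto \langle \xi, \gamma(\theta)\rangle$ has at most second-order zeros (which the paper states and refers to \cite[(3.6)]{FO2} for details). Your Gram-matrix justification of the uniform bound $|f_{\xi}| + |f_{\xi}'| + |f_{\xi}''| \gtrsim 1$ is a correct elaboration of that same point, and the sharpness remark is accurate.
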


\begin{proof} Observe that
\begin{displaymath} \{\theta \in [0,2\pi) : x \sim_{\theta} y\} \subset \{\theta \in [0,2\pi) : |\rho_{\theta}(x - y)| \leq \delta\}. \end{displaymath}
The length of the set on the right hand side can be estimated by studying the function $\theta \mapsto \rho_{\theta}(\xi)$, $\xi \in S^{2}$. The key observation is that this function can have at most second order zeros. The details can be found above \cite[(3.6)]{FO2}.
\end{proof}

Next, the proof deviates a little further from the one of Theorem \ref{main}. One defines the cone
\begin{displaymath} C^{E} := \bigcup_{\theta \in E_{\delta}} b_{\theta}, \end{displaymath} 
where $b_{\theta} = \spa(\gamma(\theta) \times \dot{\gamma}(\theta)) = \spa(\cos \theta, \sin \theta, -1)$, as before. If a difference $x - y$ stays far from $C^{E}$, the universal bound in Lemma \ref{universal} can be improved as follows.
\begin{lemma}\label{improvement} Let $0 \leq \tau < 1$, and assume that $y  - x \notin B(C^{E},\delta^{\tau})$. Then
\begin{displaymath} |\{\theta \in E_{\delta} : x \sim_{\theta} y\}| \lesssim \delta^{1 - \tau}. \end{displaymath}
\end{lemma}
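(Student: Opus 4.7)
The plan is to upgrade the universal bound in Lemma \ref{universal} by converting the geometric separation hypothesis into a quantitative lower bound on the derivative of the function $f(\theta) := \rho_{\theta}(\xi) = \xi \cdot \gamma(\theta)$, where $\xi := y - x$. Since $x \sim_{\theta} y$ forces $|f(\theta)| \lesssim \delta$, the lemma reduces to bounding the length of
\begin{displaymath}
S := \{\theta \in E_{\delta} : |f(\theta)| \lesssim \delta\}.
\end{displaymath}

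The key step is the pointwise identity
\begin{displaymath}
\dist(\xi, b_{\theta})^{2} = (\xi \cdot \gamma(\theta))^{2} + 2\,(\xi \cdot \dot{\gamma}(\theta))^{2} = f(\theta)^{2} + 2 f'(\theta)^{2},
\end{displaymath}
which is a direct computation using that $\{\gamma(\theta),\, \sqrt{2}\,\dot{\gamma}(\theta)\}$ is an orthonormal basis of $b_{\theta}^{\perp}$. The hypothesis $\xi \notin C^{E}(\delta^{\tau})$ reads $f(\theta)^{2} + 2 f'(\theta)^{2} > \delta^{2\tau}$ for every $\theta \in E_{\delta}$. On $S$ the first summand is of size $\delta^{2}$ and hence, for $\tau < 1$ and $\delta$ small, is negligible against $\delta^{2\tau}$; I therefore obtain the uniform lower bound $|f'(\theta)| \gtrsim \delta^{\tau}$ on $S$. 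Morally, the separation from $C^{E}$ is precisely the quantitative failure of the \emph{second-order} vanishing of $f$ that was responsible for the square root in Lemma \ref{universal}.

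The conclusion is then immediate from the area formula:
\begin{displaymath}
\delta^{\tau} \cdot |S| \lesssim \int_{S} |f'(\theta)|\, d\theta = \int_{\R} \#\{\theta \in S : f(\theta) = y\}\, dy \lesssim \delta,
\end{displaymath}
the last inequality using that the integrand is supported in an interval of length $O(\delta)$ and that $f(\theta) = \tfrac{1}{\sqrt{2}}(\xi_{1}\cos\theta + \xi_{2}\sin\theta + \xi_{3})$ is a sinusoid plus a constant, hence admits at most two preimages per value. Dividing by $\delta^{\tau}$ yields $|S| \lesssim \delta^{1-\tau}$. The main obstacle, such as it is, lies in spotting the identity relating $\dist(\xi, b_{\theta})$ to the $1$-jet of $f$, which is what cleanly encodes the geometric hypothesis as an analytic bound; the degenerate case $(\xi_{1}, \xi_{2}) = (0, 0)$ is harmless because then $\xi \cdot \dot{\gamma} \equiv 0$ and the hypothesis forces $|f| > \delta^{\tau}/\sqrt{2} \gg \delta$, making $S$ empty.
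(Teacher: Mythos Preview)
Your proof is correct and follows essentially the same route as the paper: both arguments convert the hypothesis $\dist(\xi,b_{\theta})>\delta^{\tau}$ into the statement that at least one of $|\xi\cdot\gamma(\theta)|$, $|\xi\cdot\dot\gamma(\theta)|$ exceeds $c\delta^{\tau}$, observe that on the set $\{x\sim_{\theta}y\}$ the first quantity is $\le\delta$, and conclude $|f'(\theta)|\gtrsim\delta^{\tau}$ there. The only cosmetic difference is that you package the final step via the area formula, whereas the paper phrases it as ``intervals of length $\lesssim\delta^{1-\tau}$ around the at most two zeros of $f$''; these are the same estimate.
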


\begin{proof} By definition of $B(C^{E},\delta^{\tau})$, one has $\dist(y - x,b_{\theta}) > \delta^{\tau}$ for all $\theta \in E_{\delta}$. Since $b_{\theta} = \ker \tilde{\pi}_{\theta}$, this implies that $|\tilde{\pi}_{\theta}(y - x)| > \delta^{\tau}$ for $\theta \in E_{\delta}$. Rewriting the inequality,
\begin{displaymath} \left[\left((x - y) \cdot \frac{\gamma(\theta)}{|\gamma(\theta)|}\right)^{2} + \left((x - y) \cdot \frac{\dot{\gamma}(\theta)}{|\dot{\gamma}(\theta)|}\right)^{2}\right]^{1/2} = |\tilde{\pi}_{\theta}(x - y)| > \delta^{\tau}. \end{displaymath}
Since $|\gamma(\theta)|$ and $|\dot{\gamma}(\theta)|$ are both bounded from below on $[0,2\pi)$, one may infer that, for some suitable constant $c > 0$, 
\begin{displaymath} \{\theta \in E_{\delta} : x \sim_{\theta} y\} \subset \{\theta : |(x - y) \cdot \gamma(\theta)| > c\delta^{\tau}\} \cup \{\theta : |(x - y) \cdot \dot{\gamma}(\theta)| > c\delta^{\tau}\}. \end{displaymath}
On the other hand, the condition $x \sim_{\theta} y$ always implies that $|(x - y) \cdot \gamma(\theta)| \leq \delta$, so, if $\delta > 0$ is small,
\begin{displaymath} \{\theta \in E_{\delta} : x \sim_{\theta} y\} \subset \{\theta \in [0,2\pi) : |(x - y) \cdot \gamma(\theta)| \leq \delta \text{ and } |(x - y) \cdot \dot{\gamma}(\theta)| > c\delta^{\tau}\}. \end{displaymath}
As long as $x \neq y$, the mapping $\theta \mapsto (x - y) \cdot \gamma(\theta) = \rho_{\theta}(x - y)$ has at most two zeroes on $[0,2\pi)$, and the set $\{\theta : |\rho_{\theta}(x - y)| \leq \delta\}$ is contained in the union of certain intervals around these zeroes. The upper bound on $|(x - y) \cdot \gamma(\theta)|$ and the lower bound on $|(x - y) \cdot \dot{\gamma}(\theta)|$ show that these individual intervals have length $\lesssim \delta^{1 - \tau}$, and the proof of the lemma is complete. \end{proof}

The next goal is to find three points $x_{1},x_{2},x_{3} \in B(0,1)$ such that $|x_{i} - x_{j}| \geq \delta^{13\kappa}$ for $1 \leq i < j \leq 3$ and 
\begin{equation}\label{form14} \mu([x_{1} + B(C^{E},\delta^{\tau})] \cap [x_{2} + B(C^{E},\delta^{\tau})] \cap [x_{3} + B(C^{E},\delta^{\tau})]) \geq \delta^{13\kappa}. \end{equation}
As long as one is not interested in optimising the constants in Theorem \ref{main2}, the number $\tau$ can be chosen freely on the open interval $(0,1/2)$; the value of $\kappa > 0$ will be fixed later, and it will have to be small relative to $\tau$. To reach \eqref{form14}, one -- almost as before -- defines the set $G$ by
\begin{displaymath} G := \{y \in \R^{3} : \mu(y + B(C^{E},\delta^{\tau})) \geq \delta^{\kappa}\}. \end{displaymath}
Write $\cE = I_{G} + I_{\R^{3} \setminus G}$, where
\begin{displaymath} I_{G} = \int_{G} \int |\{\theta \in E_{\delta} : x \sim_{\theta} y\}| \, d\mu x \, d\mu y \end{displaymath}
and
\begin{displaymath} I_{\R^{3} \setminus G} = \int_{\R^{3} \setminus G} \int |\{\theta \in E_{\delta} : x \sim_{\theta} y\}| \, d\mu x \, d\mu y. \end{displaymath}
The part $I_{G}$ is estimated using the universal bound from Lemma \ref{universal}:
\begin{displaymath} I_{G} \lesssim \delta^{1/2} \cdot \int_{G} \int \frac{1}{|x - y|^{1/2}} \, d\mu x \, d\mu y \lesssim_{s} \delta^{1/2} \cdot \mu(G). \end{displaymath}
In the latter inequality one needs the growth condition $\mu(B(x,r)) \lesssim \min\{r^{s},1\}$ with some $s > 1/2$ to ensure that $\int |x - y|^{-1/2} \, d\mu x \lesssim_{s} 1$ for $y \in \R^{3}$. To find an upper bound for $I_{\R^{3} \setminus G}$, another splitting of the integration is required:
\begin{displaymath} I_{\R^{3} \setminus G} = \int_{\R^{3} \setminus G} \int_{y + B(C^{E},\delta^{\tau})} \ldots \, d\mu x \, d\mu y + \int_{\R^{3} \setminus G} \int_{\R^{3} \setminus (y + B(C^{E},\delta^{\tau}))} \ldots \, d\mu x \, d\mu y. \end{displaymath}
These terms will be called $I_{\R^{3} \setminus G}^{1}$ and $I_{\R^{3} \setminus G}^{2}$. As regards $I_{\R^{3} \setminus G}^{1}$, the definition of $y \in \R^{3} \setminus G$ means that $\mu(y + B(C^{E},\delta^{\tau})) < \delta^{\kappa}$. Let 
\begin{displaymath} A_{j}(y) := \{x \in \R^{3} : 2^{j} \leq |x - y| \leq 2^{j + 1}\}. \end{displaymath}
Combining the universal bound from Lemma \ref{universal} with the inequality 
\begin{displaymath} \mu([y + B(C^{E},\delta^{\tau})] \cap A_{j}(y)) \lesssim \min\{\delta^{\kappa},2^{js}\} \leq \delta^{\kappa(1 - 1/2s)} \cdot 2^{j/2}, \quad y \in \R^{3} \setminus G, \end{displaymath}
gives
\begin{align*} I_{\R^{3} \setminus G}^{1} & \lesssim \int_{\R^{3} \setminus G} \int_{B(y,\delta)} 2\pi \, d\mu x \, d\mu y\\
&\quad + \int_{\R^{3} \setminus G} \sum_{\delta \leq 2^{j} \leq 1} \int_{(y + B(C^{E},\delta^{\tau})) \cap A_{j}(y)} |\{\theta \in E_{\delta} : x \sim_{\theta} y\}| \, d\mu x \, d\mu y\\
& \lesssim \delta^{s} + \delta^{1/2} \cdot \int_{\R^{3} \setminus G} \sum_{\delta \leq 2^{j} \leq 1} 2^{-j/2} \cdot \mu([y + B(C^{E},\delta^{\tau})] \cap A_{j}(y)) \, d\mu y\\
& \lesssim \delta^{s} + \delta^{1/2 + \kappa(1 - 1/2s)} \cdot \log \left(\frac{1}{\delta}\right). \end{align*} 
In estimating $I_{\R^{3} \setminus G}^{2}$, one only needs to know that $y - x \notin B(C^{E},\delta^{\tau})$ in the inner integration. This enables the use of Lemma \ref{improvement}:
\begin{align*} I_{\R^{3} \setminus G}^{2} \lesssim \int_{\R^{3}} \int_{\R^{3}  \setminus (y + B(C^{E},\delta^{\tau}))} \delta^{1 - \tau} \, d\mu x \, d\mu y \leq \delta^{1 - \tau}. \end{align*} 
Collecting the three-part upper estimate for $\cE$ and comparing it with the lower bound \eqref{form12} yields
\begin{displaymath} \delta^{\sigma} \cdot \left(\log \left(\frac{1}{\delta} \right) \right)^{-6} \lesssim \cE \lesssim \delta^{1/2} \cdot \mu(G) + \delta^{s} + \delta^{1/2 + \kappa(1 - 1/2s)} \cdot \log \left(\frac{1}{\delta}\right) + \delta^{1 - \tau}. \end{displaymath}
Now, as long as $0 < \kappa,\tau < 1/2$ are fixed parameters, assuming that $\sigma$ is close enough to $1/2$ shows that the sum of the three last terms on the right hand side cannot dominate the left hand side for small $\delta$. Thus, one obtains
\begin{equation}\label{form13} \mu(G) \gtrsim \delta^{\sigma - 1/2} \cdot \left(\log \left(\frac{1}{\delta} \right) \right)^{-6} \geq \delta^{\kappa}, \end{equation} 
where the second inequality is, once again, reached simply by taking $\delta > 0$ small and $\sigma$ close to $1/2$. Next, an application of Hölder's inequality similar to the one seen in the proof of Theorem \ref{main} gives
\begin{displaymath} A := \iiint \mu([x + B(C^{E},\delta^{\tau})] \cap [y + B(C^{E},\delta^{\tau})] \cap [z + B(C^{E},\delta^{\tau})]) \, d\mu x \, d\mu y \, d\mu z \gtrsim \delta^{6\kappa}. \end{displaymath} 
Recall that the aim is to find a triple $x_{1},x_{2},x_{3} \in \spt \mu \subset B(0,1)$ such \eqref{form14} holds and the mutual distance of the points $x_{i}$ is at least $\delta^{13\kappa}$. If this cannot be done, then the condition
\begin{displaymath} \min\{|x_{i} - x_{j}| : 1 \leq i < j \leq 3\} \geq \delta^{13\kappa} \end{displaymath}
implies that 
\begin{displaymath} \mu([(x_{1} + B(C^{E},\delta^{\tau})] \cap [x_{2} + B(C^{E},\delta^{\tau})] \cap [x_{3} + B(C^{E},\delta^{\tau})]) < \delta^{13\kappa} \end{displaymath}
for all $x_{1},x_{2},x_{3} \in \spt \mu$. Thus, one finds that
\begin{align*} A & \leq \sum_{1 \leq i_{1} \leq i_{2} \leq i_{3} \leq 3} \iint \int_{B(x_{i_{1}},\delta^{13\kappa}) \cup B(x_{i_{2}},\delta^{13\kappa})} \, d\mu x_{i_{3}} \, d\mu x_{i_{1}} \, d\mu x_{i_{2}}\\
& \quad + \iiint_{\{\min\{|x_{i} - x_{j}| : 1 \leq i < j \leq 3\} \geq \delta^{13\kappa}\}} \delta^{13\kappa} \, d\mu x_{1} \, d\mu x_{2} \, d\mu x_{3} \lesssim \delta^{13\kappa s} + \delta^{13\kappa}. \end{align*}
Since $s > 1/2$, for small enough $\delta > 0$ this violates the lower for $A$ obtained above. Thus, there must exist points $x_{1},x_{2},x_{3} \in B(0,1)$ such that $|x_{i} - x_{j}| \geq \delta^{13\kappa}$ and \eqref{form14} holds. Without loss of generality, assume that $x_{1} = 0$.

Now, it is again time to introduce the relevant geometric lemma:
\begin{lemma}[Three cones lemma]\label{threeCones} There is an absolute constant $c \in (0,1)$ such that the following holds for small enough $\delta > 0$. Let $C = \{(x,y,z) : x^{2} + y^{2} = z^{2}\}$, and let $p,q \in B(0,1)$ be points satisfying
\begin{displaymath} \min\{|p|,|q|,|p - q|\} \geq \delta^{c}. \end{displaymath}
Write
\begin{displaymath} \cC_{0} := B(C,\delta), \quad \cC_{p} := p + \cC_{0}, \quad \cC_{q} := q + \cC_{0}. \end{displaymath}
Then the intersection
\begin{displaymath} (\cC_{0} \cap \cC_{p} \cap \cC_{q}) \cap B(0,1) \end{displaymath}
is contained in the $\delta^{c}$-neighbourhood of at most two of the lines on $C$. \end{lemma}

Assuming that $13\kappa/\tau < c$ and applying the three cones lemma with $p = x_{2}, q = x_{3}$, and with $\delta^{\tau}$ in place of $\delta$, one finds that the intersection
\begin{displaymath} (x_{1} + B(C^{E},\delta^{\tau})) \cap (x_{2} + B(C^{E},\delta^{\tau})) \cap (x_{3} + B(C^{E},\delta^{\tau})) \cap B(0,1) \end{displaymath}
is contained in the $\delta^{c \tau}$-neighbourhood of at most two lines on $C$. Let $L_{1},L_{2} \subset C$ be these lines. It follows from \eqref{form14} that either $\mu(B(C^{E},\delta^{\tau}) \cap B(L_{1},\delta^{c \tau})) \gtrsim \delta^{13\kappa}$ or $\mu(B(C^{E},\delta^{\tau}) \cap B(L_{2},\delta^{c \tau})) \gtrsim \delta^{13\kappa}$; assume that the former options holds. Then also
\begin{equation}\label{form15} \mu(B(C^{E},\delta^{c\tau}) \cap B(L_{1},\delta^{c\tau})) \gtrsim \delta^{13\kappa}, \end{equation}
by monotonicity and $c < 1$. There are two options: either $L_{1}$ forms a large angle with all the lines on $b_{\theta} \subset C^{E}$, or $L_{1}$ forms a small angle with a certain line on $C^{E}$. More precisely, assume first that the angle between $L_{1}$ and each line $b_{\theta} \subset C^{E}$, $\theta \in E$, is at least $\delta^{c \tau/2}$. Then, since $L_{1}$ intersects all the lines on $C^{E}$ at the origin, simple geometry (as in \cite[(4)]{Wo}) shows that
\begin{displaymath} B(C^{E},\delta^{c\tau}) \cap B(L_{1},\delta^{c\tau}) \subset B(0,\delta^{c\tau/3}) \end{displaymath}
for $\delta > 0$ small enough. However, this would imply that 
\begin{displaymath} \mu(B(C^{E},\delta^{c\tau}) \cap B(L_{1},\delta^{c\tau})) \lesssim \delta^{c s \tau/3}, \end{displaymath}
which, using \eqref{form15}, can be ruled out by choosing $\kappa > 0$ small enough to begin with. The conclusion is that there exists a line $L = b_{\theta} \subset C^{E}$ such that the angle between $L_{1}$ and $L$ is smaller than $\delta^{c\tau/2}$. It follows that $B(L_{1},\delta^{c\tau}) \cap B(0,1) \subset B(L,\delta^{c\tau/3})$ for small enough $\delta > 0$, and so \eqref{form15} yields
\begin{displaymath} \mu(B(L,\delta^{c\tau/3})) \gtrsim \delta^{13\kappa}. \end{displaymath}
To complete the proof of the theorem, apply Lemma \ref{littleMass} to the projected measure $\tilde{\pi}_{\theta\sharp}\mu$, where $L = b_{\theta}$. Since $\theta \in E$, one has \eqref{form10}, and then Lemma \ref{littleMass} yields an upper bound for the $\mu$ mass of the pre-images of discs on $\tilde{V}_{\theta}$. The neighbourhood $B(L,\delta^{c \tau/3})$ is such a pre-image, so
\begin{displaymath} \mu(B(L,\delta^{c\tau/3})) \lesssim (\delta^{c\tau/3})^{s/2} \sim \delta^{c s\tau/6}. \end{displaymath}
Choosing $\kappa < c s \tau/78$, this contradicts the lower bound from \eqref{form15} and completes the proof of Theorem \ref{main2}.
\end{proof}

\appendix

\section{Proof of the two cones lemma}\label{AppendixB}

Recall that $\gamma$ was an $S^{2}$-valued $\cC^{3}$-curve initially defined on an open subinterval of $\R$, which we now choose to denote by $J_{0}$, such that 
\begin{displaymath} \spa\{\gamma(\theta),\dot{\gamma}(\theta),\ddot{\gamma}(\theta)\} = \R^{3}, \qquad \theta \in J_{0}. \end{displaymath}
We reiterate the statement of the Two cones lemma:

\begin{lemma}[Two cones lemma]\label{twoCones} Let $\gamma$ be as above. Then, the following holds for small enough $\epsilon > 0$, for all short enough subintervals $J \subset J_{0}$ (see Remark \ref{parametrisationRemark} below), for small enough $\delta > 0$, and for $5\epsilon \leq \tau < 1/2$. Let 
\begin{displaymath} \cC := \bigcup_{\theta \in J} B(\ell_{\theta},\delta),  \end{displaymath} 
where $\ell_{\theta}$ is the \textbf{half-line} $\ell_{\theta} = \{r\gamma(\theta) : r \geq 0\}$, and assume that $p \in \R^{3}$ is a point with $|p| \geq \delta^{\epsilon}$. Then the intersection
\begin{displaymath} \cC \cap (\cC + p) \cap B(0,1) \end{displaymath}
can be covered by two balls of diameter $\lesssim \delta^{\epsilon}$, plus either 
\begin{itemize}
\item[(a)] $\lessapprox \delta^{-1/2 - 2\tau}$ balls of diameter $\lessapprox \delta^{1/2}$, or
\item[(b)]  $\lessapprox \delta^{-\tau/4}$ balls of diameter $\lessapprox \delta^{\tau/4}$. 
\end{itemize}
\end{lemma}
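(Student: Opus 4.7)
\emph{The plan.} The idea is to parametrize the set $C \cap (C+p)$ as (essentially) a smooth curve $\theta \mapsto x(\theta) \in \R^{3}$, localise the failure of the parametrization into two small balls around the two cone apices $0$ and $p$, and then cover a suitable neighbourhood of the remaining curve by balls at a scale dictated by how near the two cones come to tangency. Concretely, a point $x \in \cC \cap (\cC + p) \cap B(0,1)$ satisfies $x = r\gamma(\theta) + O(\delta)$ and $x - p = s\gamma(\phi) + O(\delta)$ for some $\theta,\phi \in J$ and $r,s \geq 0$, so
\begin{equation*}
r\gamma(\theta) - s\gamma(\phi) = p + O(\delta).
\end{equation*}
Setting $h = \phi - \theta$, expanding $\gamma(\phi)$ to second order, and decomposing $p$ in the moving basis $\{\gamma(\theta),\dot\gamma(\theta),\ddot\gamma(\theta)\}$ (which is a basis by \eqref{curvature}) as $p = a(\theta)\gamma(\theta) + b(\theta)\dot\gamma(\theta) + c(\theta)\ddot\gamma(\theta)$, matching coefficients in $1$, $h$, $h^{2}$ yields, to leading order, $h(\theta) \approx 2c/b$, $s(\theta) \approx -b^{2}/(2c)$, $r(\theta) \approx a + s$. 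This identifies the ``spine'' of the intersection as the curve $\theta \mapsto x(\theta) := r(\theta)\gamma(\theta)$.

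Next I would absorb the cone apices into the two balls of diameter $\lesssim \delta^{\epsilon}$: near $0$ the surface $\cC$ is contained in a ball of diameter $\lesssim \delta^{\epsilon}$ (all half-lines collapse at the apex), and symmetrically for $\cC + p$ near $p$. Off the two apex balls one has $\min(r,s) \gtrsim \delta^{\epsilon}$, and the intersection lies in a $\delta$-neighbourhood of $x(\theta)$, fattened transversally by an amount controlled by the angle $\alpha(\theta)$ between the tangent planes of $\cC$ and $\cC + p$ along the curve. A direct computation in the moving frame shows $\alpha(\theta) \sim |h(\theta)| \sim |c(\theta)|$, so the crucial quantity is the size of $|c(\theta)|$ on $J$. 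I would then split $J$ according to this size. On the ``transverse'' set $\{|c(\theta)| \geq \delta^{\tau}\}$ one has $|\dot{x}(\theta)| \lesssim |c(\theta)|^{-2}$, hence the spine has length $\lesssim \delta^{-2\tau}$; moreover the parabolic tangency of the two cones forces the transverse width of their joint $\delta$-neighbourhood to be $\lesssim \delta^{1/2}$. Covering the resulting tube by $\delta^{1/2}$-balls uses $\lesssim \delta^{-1/2-2\tau}$ of them, giving option (a). On the ``tangential'' set $\{|c(\theta)| < \delta^{\tau}\}$, the transverse width can be as large as $\delta^{1/2}/|c|^{1/2}$, which defeats the $\delta^{1/2}$-scale covering, and one retreats to the coarser scale $\delta^{\tau/4}$, at which the combined length-times-thickness of the intersection yields $\lesssim \delta^{-\tau/4}$ balls, giving option (b). The assumption $|p| \geq \delta^{\epsilon}$ forces $c(\theta)$ not to be uniformly small on $J$, so the bad set is contained in a short subinterval whose length determines the threshold $\tau(\epsilon) \searrow 0$ as $\epsilon \to 0$.

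\emph{Main obstacle.} The technical crux is the transversality statement underlying Step 2: if two $\delta$-fattened, non-flat surfaces meet with angle $\alpha$ between their tangent planes at a common point, then the local intersection sits inside a parallelepiped of dimensions roughly $\delta \times \delta \times (\delta/\alpha + \delta^{1/2}/\alpha^{1/2})$, with the $\delta^{1/2}$ term reflecting the quadratic rate at which the surfaces separate when they are tangent. Establishing this quantitatively for two conical surfaces, and then tracking $\alpha(\theta) \sim |c(\theta)|$ through the moving-frame calculus so that the errors from the Taylor expansion, from solving the meeting equation modulo $\delta$, and from the bound $|p| \geq \delta^{\epsilon}$ all stay controlled uniformly in $\tau \in [\tau(\epsilon), 1/2)$, is where the absolute constant $R \geq 1$ in the exponents comes from. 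Balancing these contributions against the lengths of the good and bad subsets of $J$ is what forces the exponents $-1/2 - 2\tau$ and $-\tau/4$ in the two covering options.
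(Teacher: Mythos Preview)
Your moving-frame approach is genuinely different from the paper's, but it has a structural gap that prevents it from delivering the lemma as stated.

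\textbf{The dichotomy is either/or, not a mix.} The lemma asserts that, depending on $p$, \emph{either} option (a) covers the entire intersection \emph{or} option (b) does. Your plan splits $J$ into a ``transverse'' piece $\{|c(\theta)|\geq\delta^{\tau}\}$ and a ``tangential'' piece $\{|c(\theta)|<\delta^{\tau}\}$ and covers them by (a)-balls and (b)-balls respectively. That produces a mixed covering, and you cannot convert one type into the other without blowing up the count: covering a single $\delta^{\tau/4}$-ball by $\delta^{1/2}$-balls already costs $\delta^{-3(1/2-\tau/4)}$ of them. The paper avoids this by proving (its Proposition~\ref{secondAlternative}) that the existence of a \emph{single} near-tangential point forces an algebraic relation $|af(b/a)+c|\lessapprox\delta^{3\tau/4}$ among the translation parameters, which then pins the \emph{entire} intersection (every height $h$) near one line on $C$; this is why (b) is a global alternative, not a local patch. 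Your framework needs an analogous statement: if $|c(\theta_{0})|<\delta^{\tau}$ for some $\theta_{0}$, then the whole of $\cC\cap(\cC+p)\cap B(0,1)$ sits in a $\delta^{\tau/4}$-tube around $\ell_{\theta_{0}}$. Without it the argument does not close.

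\textbf{Secondary issues.} The intersection $C\cap(C+p)$ is generically two-branched, not single-valued in $\theta$; a line $\ell_{\theta}\subset C$ can meet $C+p$ in two points, and your formulas $h\approx 2c/b$, $s\approx -b^{2}/(2c)$ recover only one root of the resulting quadratic. The paper handles this by slicing horizontally and tracking \emph{two} special points $s_{1}(h),s_{2}(h)$ per slice. Also, your expansion degenerates where $b(\theta)$ is small, not only where $c(\theta)$ is small, and the claim that $|p|\geq\delta^{\epsilon}$ prevents $c(\theta)$ from being uniformly small is not automatic (the dual-basis coefficient $c(\theta)$ can vanish identically if $p$ happens to lie in the osculating plane of $\gamma$ at every $\theta$; one must use the curvature condition to rule this out, and even then one only gets $\max\{|a|,|b|,|c|\}\gtrsim\delta^{\epsilon}$ pointwise). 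For comparison, the paper parametrises $C$ as a graph $\{(t,hf(t/h),h)\}$ with $f''\geq\eta>0$, studies the scalar difference $d_{h}(t)$ on each horizontal slice, and replaces the implicit function theorem by a hands-on stability estimate for the zeros of $d_{h}$ under the transversality hypothesis $|d_{h}(t)|\leq\delta^{\tau}\Rightarrow|d_{h}'(t)|\geq\delta^{\tau}$.
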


\begin{remark}\label{parametrisationRemark} The correct interpretation of the lemma is that one of the options (a) or (b) always holds, depending on $p$, and not that one can choose freely between them. The notation $A \lessapprox B$ means that $A \leq R\delta^{-R\epsilon}B$ for some absolute constant $R \geq 1$, where $\epsilon > 0$ is the constant from the lemma. Writing $A \gtrapprox B$ means that $B \lessapprox A$ (that is, $A \geq (1/R)\delta^{R\epsilon}B$). It will be made apparent after \eqref{transversality} below, how small is a "small enough $\epsilon > 0$", but the meaning of "short enough $J$" will be explained right now. First of all, a precise formulation of the phrase would read as follows: one can pick any point $\theta_{0}$ on the interval $J_{0}$, where $\gamma$ was originally defined, and then restrict this interval to a neighbourhood $J$ of $\theta_{0}$, so that the lemma holds for $J$, and the length requirements for $J$ depend only on $\gamma$ and $\theta_{0}$.

Let $C$ be the surface
\begin{displaymath} C := \bigcup_{\theta \in J} \ell_{\theta}. \end{displaymath}
For convenience, assume that $J \subset J_{0}$ is closed. It is desirable to be able to parametrise $C$ as
\begin{equation}\label{parametrisation} C = \left\{\left(t,hf\left(\frac{t}{h}\right),h\right) : h \geq 0, t \in h I\right\}, \end{equation}
where $I \subset \R$ is a compact interval, and $f \colon I \to \R$ is a smooth Lipschitz function satisfying 
\begin{displaymath} f'' \geq \eta > 0. \end{displaymath}
This can be done, if $J$ is "short enough". To understand the restrictions, assume that $\theta_{0} \in J$, and -- without loss of generality -- $\gamma(\theta_{0}) = (0,0,1) \in S^{2}$. Then, the tangent plane of $S^{2}$ at $\gamma(\theta_{0})$ is $H = \{(x,y,1) : x,y \in \R\}$, and, if $J$ is so short that $\gamma(J)$ lies in the well inside the upper hemisphere of $S^{2}$, one can define a path $\lambda \colon J \to H$ by
\begin{displaymath} \lambda(\theta) := \left(\frac{\gamma_{1}(\theta)}{\gamma_{3}(\theta)},\frac{\gamma_{2}(\theta)}{\gamma_{3}(\theta)},1\right). \end{displaymath}
Then, it is clear that
\begin{displaymath} C = \bigcup_{\theta \in \tilde{J}} \spa(\lambda(\theta)),  \end{displaymath}
where $\spa(\lambda(\theta))$ refers to the half-line spanned by $\lambda(\theta)$. Moreover, since $\gamma = \gamma_{3}\lambda$, one has the following relations for the derivatives:
\begin{displaymath} \gamma_{3}'\lambda + \lambda'\gamma_{3} = \gamma' \quad \text{and} \quad \gamma_{3}''\lambda + 2\gamma_{3}'\lambda' + \gamma_{3}\lambda'' = \gamma''. \end{displaymath}
This leads to 
\begin{align*} \gamma'' \cdot (\gamma \times \gamma') = (\gamma_{3}''\lambda + 2\gamma_{3}'\lambda' + \gamma_{3}\lambda'') \cdot (\gamma_{3}\lambda \times [\gamma_{3}'\lambda + \lambda'\gamma_{3}]) = \gamma_{3}^{3}\lambda'' \cdot (\lambda \times \lambda'). \end{align*} 
Since $\spa(\{\gamma,\gamma',\gamma''\}) = \R^{3}$ implies that $\gamma'' \cdot (\gamma \times \gamma') \neq 0$, the relation above shows that $\lambda''(\theta) \neq 0$ for $\theta \in J$. Moreover, $|\lambda''(\theta)| \geq \tilde{\eta} > 0$ for $\theta \in J$ by compactness.  Using this, a routine argument shows that $\lambda(J)$ can be parametrised as
\begin{equation}\label{parametrisation2} \lambda(J) = \{(t,f(t),1) : t \in I\}, \end{equation}
where $f \colon I \to \R$ is a Lipschitz function with $|f''(t)| \geq \eta > 0$ for $t \in I$, and $I \subset \R$ is a compact interval (this may involve a rotation of coordinates by $90$ degrees and making $J$ a little shorter around $\theta_{0}$, if one is so unlucky that $\lambda'(\theta_{0})$ is parallel to the $y$-axis). Then, without loss of generality, one may assume that $f''$ is positive on $I$, and $I = [0,1]$. Finally, \eqref{parametrisation2} implies \eqref{parametrisation}, since 
\begin{displaymath} C \cap \{(x,y,h) : x,y \in \R\} = h\lambda(J) = \left\{\left(t,hf\left(\frac{t}{h}\right),h\right) : 0 \leq t \leq h\right\}, \quad h \geq 0. \end{displaymath}
So, the parametrisation \eqref{parametrisation} is possible, once $J$ is "short enough". This hypothesis will be also be needed in the proof below -- mainly in the form that $\gamma(J)$ is contained in the upper hemisphere -- but I will make no further mention about it.
\end{remark}

Now that the assumptions and notations have been clarified, I start the preparations for the actual proof. Assuming that $C$ is parametrised as in \eqref{parametrisation}, with $I = [0,1]$, any translate of $C$ can be written as
\begin{displaymath} C + p = \left\{\left(t, (h + a)f \left(\frac{t + b}{h + a}\right) + c, h \right) : t \in \R, h \geq -a, 0 \leq t + b \leq h + a \right\}, \end{displaymath}
where $p = (-b,c,-a)$. To see this, note that if $x \in C + (-b,c,-a)$, then $x = (t,hf(t/h),h) + (-b,c,-a)$ for some $0 \leq t \leq h$. Then, writing $h' := h - a$ and $t' := t - b$, one has $h' \geq -a$ and $0 \leq t' + b \leq h' + a$, and $x = (t',(h' + a)f((t' + b)/(h' + a)) + c, h')$.

With the identification $p \cong (-a,b,-c)$ as above, the assumption $\delta^{\epsilon} \leq |p| \leq 1$ translates to
\begin{equation}\label{sizeabc} \delta^{\epsilon} \leq \max\{|a|,|b|,|c|\} \leq 1. \end{equation}

Before getting anywhere, one also needs to declare that
\begin{equation}\label{heightAssumption} \min\{h,h + a\} \geq \delta^{\epsilon} \end{equation}
for all the heights $h$, which one encounters below. Indeed, the "two balls of diameter $\lesssim \delta^{\epsilon}$" appearing in the statement of the lemma are used to cover the sets
\begin{displaymath} \cC \cap \R^{2} \times [-\delta^{\epsilon},\delta^{\epsilon}] \quad \text{and} \quad (\cC + p) \cap \R^{2} \times [- a - \delta^{\epsilon}, - a + \delta^{\epsilon}]. \end{displaymath}
After this, all the points $(t,y,h) \in \cC \cap (\cC + p)$ where \eqref{heightAssumption} fails have already been covered, and one can assume \eqref{heightAssumption} in the sequel. The upshot is that the functions 
\begin{equation}\label{functions} t \mapsto hf\left(\frac{t}{h}\right) \quad \text{and} \quad t \mapsto (h + a)f\left(\frac{t + b}{h + a}\right) \end{equation}
and their difference are $L$-Lipschitz with $L \lessapprox 1$ under the assumption \eqref{heightAssumption}. 

Next, write
\begin{displaymath} H(h,r) := \R^{2} \times [h - r, h + r] \end{displaymath}
for the horizontal slab of width $2r$, with vertical centre at $h$. For $r = 0$, this is abbreviated to $H(h) := H(h,0)$. 

\subsection{Overview of the proof} I will now explain the structure of the proof at a semi-technical level, introducing notation as I go. There are two main steps. The first is to restrict the intersection $\cC \cap (\cC + p) \cap B(0,1)$ to some fixed height $h$ satisfying \eqref{heightAssumption}, and to study a single slice of the form
\begin{displaymath} H(h) \cap \cC \cap (\cC + p), \qquad h \in [-1,1]. \end{displaymath}
The main analytic tool in this task is the function
\begin{displaymath} d_{h}(t) = hf\left(\frac{t}{h}\right) - \left[(h + a)f\left(\frac{t + b}{h + a}\right) + c \right], \end{displaymath}
defined for
\begin{displaymath} t \in I_{h} := [\max\{0,-b\},\min\{h,h + a - b\}]. \end{displaymath}
So, $I_{h}$ is simply the intersection of the domains of definition of the functions in \eqref{functions}. See Figure \ref{fig1} for the graphical interpretation and recall that $d_{h}$ is Lipschitz with constant $\lessapprox 1$ under the hypothesis \eqref{heightAssumption}.
\begin{figure}[h!]
\begin{center}
\includegraphics[scale = 0.5]{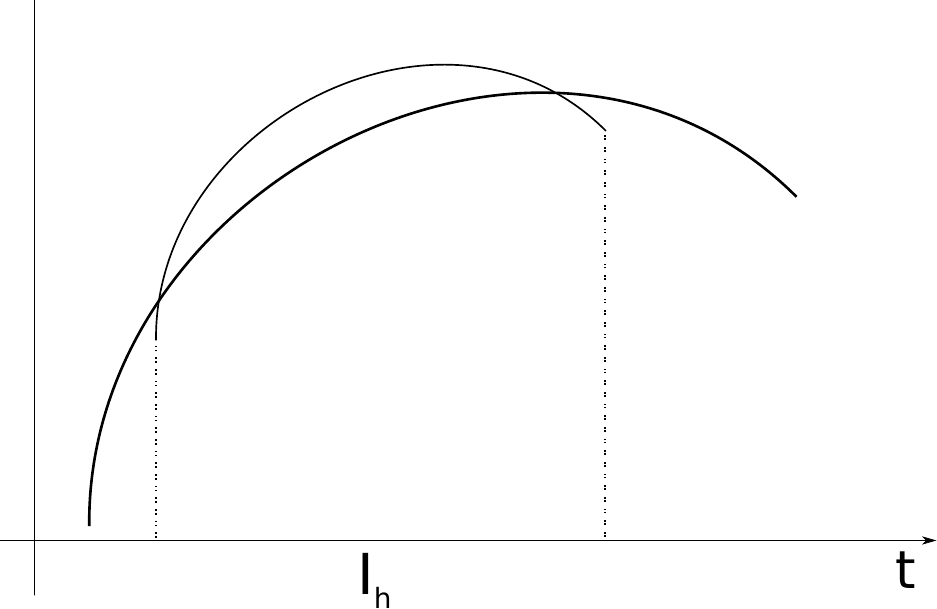}
\caption{The function $d_{h}$ measures the difference between the heights of the graphs on the interval $I_{h}$, where both graphs are well-defined.}\label{fig1}
\end{center}
\end{figure}
Now, in the first step of the proof, one is trying to establish that the set $H(h) \cap \cC \cap (\cC + p)$ (interpreted as a subset of $\R^{2}$) can be covered by at most two small discs. To do this, one observes that
\begin{displaymath} H(h) \cap \cC \cap (\cC + p) \subset B(\Gamma_{1},\lessapprox \delta) \cap B(\Gamma_{2},\lessapprox \delta), \end{displaymath}
where $B(\Gamma_{1},\lessapprox \delta)$ and $B(\Gamma_{2},\lessapprox \delta)$ stand for the $A$-neighbourhoods of the graphs of the functions
\begin{displaymath} g_{1}(t) = hf\left(\frac{t}{h}\right) \quad \text{and} \quad g_{2}(t) = (h + a)f\left(\frac{t + b}{h + a}\right). \end{displaymath}
for some $A \lessapprox \delta$. With this notation, $d_{h} = g_{1} - g_{2}$, and the proof will proceed by establishing that 
\begin{displaymath} \{t \in I_{h} : |d_{h}(t)| \lessapprox \delta\} \end{displaymath}
can be covered by two small intervals \textbf{with midpoints located either at the zeros of $d_{h}$, or at the endpoints of the interval $I_{h}$}. To obtain from this information the desired disc-cover for $B(\Gamma_{1},\lessapprox \delta) \cap B(\Gamma_{2},\lessapprox \delta)$, one uses the following simple fact:
\begin{fact}\label{comparison} Assume that $g_{i} \colon I_{i} \to \R$, $i \in \{1,2\}$ are two $L$-Lipschitz functions defined on the intervals $I_{1},I_{2} \subset \R$, where $I_{1} \cap I_{2} \neq \emptyset$ and $L \geq 1$. Let $\Gamma_{i} \subset \R^{2}$ be the graph of $g_{i}$,
\begin{displaymath} \Gamma_{i} = \{(t,g_{i}(t)) : t \in I_{i}\}. \end{displaymath}
Then $B(\Gamma_{1},\delta) \cap B(\Gamma_{2},\delta)$ is contained in the $6L\delta$-neighbourhood of the set 
\begin{displaymath} g_{1}(\{t \in I_{1} \cap I_{2} : |(g_{1} - g_{2})(t)| \leq 6L\delta\}). \end{displaymath}
\end{fact}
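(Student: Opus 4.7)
The approach is a direct unpacking of the definitions combined with the triangle inequality. Given an arbitrary point $(x,y) \in \Gamma_{1}(\delta) \cap \Gamma_{2}(\delta)$, I will produce a witness $t \in I_{1} \cap I_{2}$ with $|(g_{1} - g_{2})(t)| \leq 6L\delta$ such that $(x,y)$ lies within distance $6L\delta$ of the corresponding graph point $(t,g_{1}(t))$. The constant $6L$ is not sharp; it is chosen to absorb the losses from both the Lipschitz estimate and the step where the witness is produced.

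The first step is routine: by definition of neighbourhoods of graphs, there exist $t_{i} \in I_{i}$ with $|(x,y) - (t_{i},g_{i}(t_{i}))| \leq \delta$. In particular $|x - t_{i}| \leq \delta$ and $|y - g_{i}(t_{i})| \leq \delta$, which gives the two key preliminary bounds
\begin{displaymath} |t_{1} - t_{2}| \leq 2\delta \quad \text{and} \quad |g_{1}(t_{1}) - g_{2}(t_{2})| \leq 2\delta. \end{displaymath}

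The only genuinely non-cosmetic step is to replace the pair $(t_{1},t_{2})$ by a single $t \in I_{1} \cap I_{2}$ that is within $2\delta$ of each $t_{i}$. This is where the hypothesis $I_{1} \cap I_{2} \neq \emptyset$ is used. The observation is that for intervals $I_{1},I_{2} \subset \R$ with $I_{1} \cap I_{2} \neq \emptyset$, and points $t_{1} \in I_{1}$, $t_{2} \in I_{2}$ with $|t_{1} - t_{2}| \leq 2\delta$, a short case analysis on whether $t_{1} \in I_{2}$, $t_{2} \in I_{1}$, or neither shows that some endpoint of $I_{1} \cap I_{2}$ lies in the segment $[t_{1},t_{2}]$ and can serve as $t$. (In the trivial subcases one simply takes $t = t_{1}$ or $t = t_{2}$.) This is the one spot where some care is needed, but it is purely combinatorial and contained in a few lines.

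Once $t \in I_{1} \cap I_{2}$ is in hand, the rest is the triangle inequality plus the Lipschitz bound. For the functional defect,
\begin{displaymath} |g_{1}(t) - g_{2}(t)| \leq |g_{1}(t) - g_{1}(t_{1})| + |g_{1}(t_{1}) - g_{2}(t_{2})| + |g_{2}(t_{2}) - g_{2}(t)| \leq 2L\delta + 2\delta + 2L\delta \leq 6L\delta, \end{displaymath}
using $L \geq 1$; so $t$ lies in the set appearing in the statement. For the spatial distance, $|x - t| \leq |x - t_{1}| + |t_{1} - t| \leq 3\delta$ and $|y - g_{1}(t)| \leq |y - g_{1}(t_{1})| + L|t_{1} - t| \leq (1 + 2L)\delta \leq 3L\delta$, so $(x,y)$ lies within $6L\delta$ of $(t,g_{1}(t))$, completing the proof. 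The only step with any content is the interval-chasing argument in the previous paragraph; everything else is the triangle inequality.
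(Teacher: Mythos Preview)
Your proof is correct and follows exactly the approach the paper indicates: the paper's own proof consists of the single line ``Repeated application of the triangle inequality,'' and you have simply written this out in full, including the one subtlety (locating a common parameter $t \in I_{1} \cap I_{2}$ within $2\delta$ of both $t_{1}$ and $t_{2}$) that the paper leaves implicit.
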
 

\begin{proof} Repeated application of the triangle inequality. \end{proof} 

In the present application, $I_{1} \cap I_{2} = I_{h}$, so -- to apply the fact -- one should make sure that $I_{h} \neq \emptyset$. In general, the set $H(\neq\emptyset) := \{h \in [-1,1] : I_{h} \neq \emptyset\}$ is a closed subinterval of $[-1,1]$, by inspecting the definition of $I_{h}$. If it is a \textbf{strict} subinterval, one should restrict all further attention to $H(\neq\emptyset)$. To avoid introducing any further notation, however, I will assume that $H(\neq \emptyset) = [-1,1]$.

So, once it has been shown that $\{t \in I_{h} : |d_{h}(t)| \lessapprox \delta\}$ can be covered by two intervals $I_{1}^{h},I_{2}^{h}$ of length $\leq \delta^{\beta}$, for some $\beta > 0$, Fact \ref{comparison} shows that the intersection $\Gamma_{1}(\lessapprox \delta) \cap \Gamma_{2}(\lessapprox \delta) \supset H(h) \cap \cC \cap (\cC + p)$ can be covered by two discs of diameter $\lessapprox \delta^{\beta}$. More precisely, the centres of the discs can be chosen to be of the form 
\begin{displaymath} (t,g_{1}(t),h) = \left(t,hf\left(\frac{t}{h}\right),h\right), \end{displaymath}
where either 
\begin{displaymath} d_{h}(t) = 0 \quad \text{or} \quad t \in \partial I_{h}, \end{displaymath}
as long as this holds for the midpoints $t$ of the intervals $I_{1}^{h}$ and $I_{2}^{h}$.

The second main step of the proof is "gluing together" the slices $H(h) \cap \cC \cap (\cC + p)$ for various $h \in [-1,1]$. As there is only an "abstract" statement that each $h$-slice can be covered by two small discs, there remains a risk of the -- admittedly unbelievable -- situation that the centres of the discs vary so much for different $h$ that the union of the slices can no longer be covered by a small number of small balls. Morally, the solution is to parametrise the centres of the discs by a Lipschitz function with Lipschitz constant $\lessapprox 1$. Since the centres were connected with the zeros of $d_{h}$, this sounds like a job for the implicit function theorem (IFT).

A straightforward application of the IFT runs soon into trouble, and it is instructive to see why. I will explain the "argument". One first defines a function $d \colon \R \times \R \to \R$ by $d(t,h) = d_{h}(t)$. Then, as remarked above, the midpoints of the at most two intervals covering $\{t : |d_{h}(t)| \lessapprox \delta\}$ are situated at the zeros of the function $d_{h}(t)$ (or at the endpoints of $I_{h}$, but ignore this possibility for now). So, one can start off at some $(t_{0},h_{0})$ such that $d(t_{0},h_{0}) = 0$ and try to apply the IFT: if everything works out, the theorem pops out a smooth function $\psi$ of the variable $h$ such that $d(\psi(h),h) = 0$ for $h$ close enough to $h_{0}$. Then, because $d_{h}$ can have at most two zeros on $I_{h}$ (easy), and since $d_{h}(\psi(h)) = 0$, it \textbf{has} to be the case that $\psi(h)$ is among the midpoints of the "abstractly" chosen two intervals covering $\{t : |d_{h}(t)| \leq \delta\}$. So, this strategy might conceivably produce a smooth parametrisation for the midpoints. As a corollary, one would obtain a smooth parametrisation for the centres of the discs, since -- as discussed above -- these can be taken to be of the form $(\psi(h),hf(\psi(h)/h),h)$. At this point, the proof would practically be finished.

There are two issues. First, the IFT gives no indication of the size of the interval around $h_{0}$ such that $g(h)$ is well-defined. However, one essentially needs a global parametrisation here. Second, the principal hypothesis of the IFT in this situation is that $d'_{h_{0}}(t_{0}) \neq 0$, and this can easily fail, if the parameters $a,b,c$ are chosen suitably. Such an event is depicted in Figure \ref{fig2}.
\begin{figure}[h!]
\begin{center}
\includegraphics[scale = 0.5]{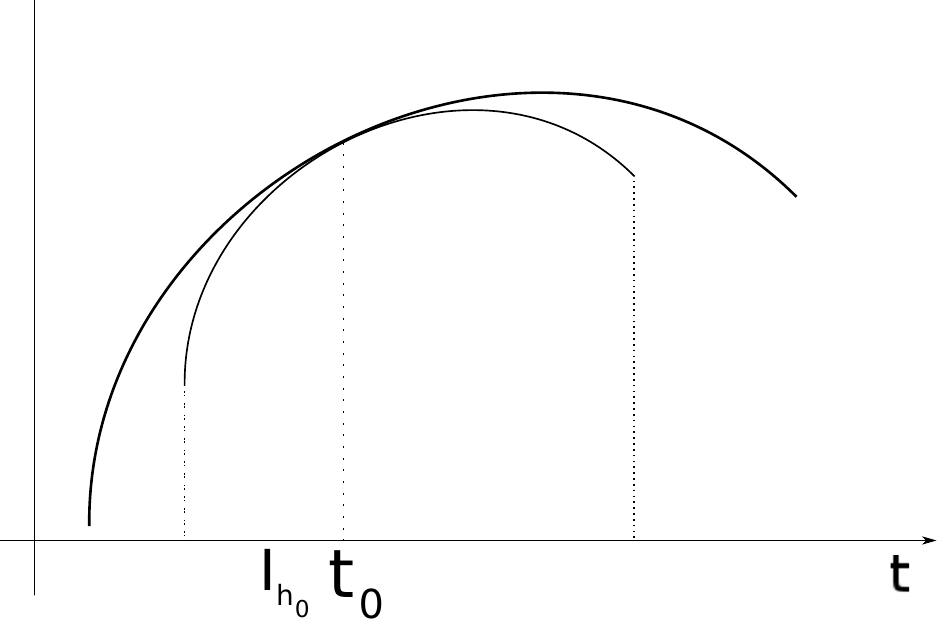}
\caption{It can happen that $d_{h_{0}}(t_{0}) = 0 = d'_{h_{0}}(t_{0})$.}\label{fig2}
\end{center}
\end{figure}
The second issue would kill the approach, were it not the case that the situation of Figure \ref{fig2} can be excluded \emph{a priori}. In fact, if $d_{h_{0}}(t_{0}) \sim 0 \sim d'_{h_{0}}(t_{0})$ for some $(t_{0},h_{0})$, one can extract an algebraic relation between the parameters $a,b,c$ and use it to finish off the whole proof in an \emph{ad hoc} manner. This leads to alternative (b) in the lemma. After the bad case has been excluded, one can prove a global "poor man's version" of the implicit function theorem by hand, and conclude the proof along the lines discussed above.
\subsection{The details} According to the proof outline above, the first task is to dispose of the situation, where $d_{h_{0}}(t_{0}) \sim 0 \sim d'_{h_{0}}(t_{0})$ for some $(t_{0},h_{0})$. This is the content of the following proposition:

\begin{proposition}\label{secondAlternative} Let $5\epsilon \leq \tau < 1/2$ where $\epsilon > 0$ is the constant from \eqref{sizeabc}. Assume that there exists a height $h_{0}$ satisfying \eqref{heightAssumption}, and a point $t_{0} \in I_{h_{0}}$ such that
\begin{displaymath} |d_{h_{0}}(t_{0})| \leq \delta^{\tau} \quad \text{and} \quad |d'_{h_{0}}(t_{0})| \leq \delta^{\tau}. \end{displaymath}
Then, the intersection $\cC \cap (\cC + p) \cap B(0,1)$ can be covered by two balls of diameter $\lesssim \delta^{\epsilon}$, plus $\lessapprox \delta^{-\tau/4}$ balls of diameter $\lessapprox \delta^{\tau/4}$.
\end{proposition}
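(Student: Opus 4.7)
The plan is to convert the pointwise near-tangency at $(t_0,h_0)$ into an approximate algebraic relation between the translation parameters $a,b,c$, and then use that relation to trap the full intersection $\cC \cap (\cC + p) \cap B(0,1)$ inside a thin tubular neighbourhood of a single ruling of $C$. Write $s_1 := t_0/h_0$ and $s_2 := (t_0+b)/(h_0+a)$. Since $d'_h(t) = f'(t/h) - f'((t+b)/(h+a))$, the hypothesis $|d'_{h_0}(t_0)| \leq \delta^{\tau}$ combined with $f'' \geq \eta > 0$ yields $|s_1 - s_2| \lesssim \delta^{\tau}$, and clearing denominators using \eqref{heightAssumption} gives $|t_0 a - h_0 b| \lessapprox \delta^{\tau}$. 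Substituting a first-order Taylor expansion $f(s_1) - f(s_2) = f'(s_2)(s_1 - s_2) + O((s_1 - s_2)^2)$ into $|d_{h_0}(t_0)| \leq \delta^{\tau}$ then produces $|a f(s_1) + c| \lessapprox \delta^{\tau}$. In the generic regime where $|a|$ is not too small (say $|a| \gtrsim \delta^{\tau/2}$), these two relations pin down $s_* := b/a \approx s_1$ and $c \approx -a f(s_*)$, so $p = (-b, c, -a) \approx -a\,(s_*, f(s_*), 1)$. Geometrically, $p$ lies within $O(\delta^{\tau/2})$ of the ruling of $C$ in direction $(s_*, f(s_*), 1)$.

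The second step is to propagate this information to every admissible height. For an arbitrary $h$ satisfying \eqref{heightAssumption}, set $t_h := s_* h$ and expand $d_h$ around $t_h$. Both $d_h(t_h)$ and $d'_h(t_h)$ evaluate to $\lessapprox \delta^{\tau}$ after inserting the algebraic relations. The second derivative
\begin{equation*}
d''_h(t) \;=\; \tfrac{1}{h} f''\!\bigl(\tfrac{t}{h}\bigr) \,-\, \tfrac{1}{h+a} f''\!\bigl(\tfrac{t+b}{h+a}\bigr)
\end{equation*}
is bounded in absolute value from below by $\gtrsim |a|/[h(h+a)]$, after possibly shortening $J$ so that the oscillation of $f''$ is dominated by $\eta$; hence the Taylor expansion $d_h(t) \approx d_h(t_h) + d'_h(t_h)(t-t_h) + \tfrac12 d''_h(\xi)(t-t_h)^2$ turns $|d_h(t)| \leq \delta^{\tau}$ into $|t - t_h|^{2} \lessapprox \delta^{\tau}/|a|$, i.e.\ $|t - t_h| \lessapprox \delta^{\tau/4}$ precisely in the range $|a| \gtrsim \delta^{\tau/2}$. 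The degenerate subcases (either $|a| \lesssim \delta^{\tau/2}$, or $|b|$ or $|c|$ being the dominant parameter in \eqref{sizeabc}) are treated by hand: the hypothesis $4\kappa < \tau$ together with $|p| \geq \delta^{\epsilon}$ forces one of $|a|, |b|, |c|$ to be $\geq \delta^{\kappa}$, and in each alternative the linear term of $d_h$ takes over the role played by the quadratic term above and delivers the same bound $|t-t_h| \lessapprox \delta^{\tau/4}$.

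Applying Fact \ref{comparison} at each height $h$ converts the slice bound into a disc of radius $\lessapprox \delta^{\tau/4}$ centred on the ruling $\{(s_* h, h f(s_*), h) : h \geq 0\}$ of $C$. As $h$ sweeps through $[-1,1] \cap \{h : \min\{h, h+a\} \geq \delta^{\epsilon}\}$, these discs fill out a tubular $\lessapprox \delta^{\tau/4}$-neighbourhood of a line segment of length $\lesssim 1$, which is covered by $\lessapprox \delta^{-\tau/4}$ balls of diameter $\lessapprox \delta^{\tau/4}$. The two horizontal slabs of vertical width $\delta^{\epsilon}$ where \eqref{heightAssumption} fails are each absorbed into a single ball of diameter $\lesssim \delta^{\epsilon}$, as announced at the start of the appendix. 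The main obstacle is the second step: the algebraic relations are derived only to accuracy $\delta^{\tau}$ at the single point $(t_0,h_0)$, and one has to verify that they propagate uniformly in $h$ with no worse loss than $\tau \mapsto \tau/4$, while bookkeeping the $\lessapprox$-factors incurred by the small-height cutoff \eqref{heightAssumption} and by the regime distinctions on $|a|$.
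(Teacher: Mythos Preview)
Your overall plan matches the paper's: extract from the near-tangency at $(t_0,h_0)$ the approximate relations $t_0/h_0 \approx b/a$ and $af(b/a)+c \approx 0$, propagate these to every admissible height by centring $d_h$ at $t_h = hb/a$, show that $\{t \in I_h : |d_h(t)| \lessapprox \delta\}$ is contained in a short interval around $t_h$, and finish with Fact \ref{comparison} and a Lipschitz sweep in $h$.

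The one genuine gap is your quadratic step. The claimed lower bound $|d_h''(t)| \gtrsim |a|/[h(h+a)]$ is false in general, even after shortening $J$. Writing $u = t/h$ and $v = (t+b)/(h+a)$, one has
\[
d_h''(t) \;=\; \frac{f''(u)\,a + \bigl(f''(u)-f''(v)\bigr)h}{h(h+a)},
\]
and the second term in the numerator has size up to $(\operatorname{osc} f'')\cdot h$. For $h \sim 1$ this dominates the first term $f''(u)\,a$ whenever $|a| \ll \operatorname{osc} f''$; since you need the bound precisely in the regime $|a| \sim \delta^{\tau/4}$ (or $\delta^{\tau/2}$), any fixed positive oscillation of $f''$ destroys it, and shortening $J$ cannot push the oscillation below $\delta^{\tau/4}$. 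The paper's fix is to bypass $d_h''$ altogether: because $f'$ is strictly increasing, $d_h'$ has its \emph{unique} zero at $hb/a$ and keeps constant sign on either side, with $|d_h'(r)| \geq \eta\,|r/h - (r+b)/(h+a)| = \eta\,|a|\,|r - hb/a|/[h(h+a)]$. Integrating this monotone quantity from $hb/a$ to $t$ gives directly $|d_h(t) - d_h(hb/a)| \gtrsim |a|\,(t-hb/a)^2/[h(h+a)]$, which is exactly the quadratic lower bound you were after.

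Two smaller remarks. First, the small-$|a|$ ``degenerate subcase'' does not need separate treatment: the paper shows $|a| < \delta^{\tau/4}$ is actually \emph{impossible} under the hypotheses, because it forces $|c| \lesssim \delta^{\tau/4}$ and hence $|b| \geq \delta^{\kappa}$ via \eqref{sizeabc}, but then $|s_1 - s_2| \gtrsim |b| - |a| \gtrsim \delta^{\kappa}$, contradicting \eqref{form16}. Second, you have not addressed the case $b/a \notin [0,1]$, where $t_h = hb/a$ may lie outside $I_h$; the paper anchors instead at an endpoint $t(h) = h+a-b$ or $t(h) = -b$ of $I_h$ and reruns the same argument.
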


\begin{proof} The derivative of $d_{h}$ has the relatively simple expression
\begin{equation}\label{derivativeOfD} d_{h}'(t) = f'\left(\frac{t}{h}\right) - f'\left(\frac{t + b}{h + a}\right). \end{equation}
Now, recall that $f'' \geq \eta > 0$. In particular, if $|d'_{h_{0}}(t_{0})| \leq \delta^{\tau}$, it follows that
\begin{equation}\label{form16} \left|\frac{t_{0}}{h_{0}} - \frac{t_{0} + b}{h_{0} + a}\right| \lesssim \left|f'\left(\frac{t_{0}}{h_{0}}\right) - f'\left(\frac{t_{0} + b}{h_{0} + a}\right)\right| = |d_{h_{0}}'(t_{0})| \leq \delta^{\tau}. \end{equation}
Consequently,
\begin{displaymath} \left|\frac{t_{0}}{h_{0}} - \frac{b}{a}\right| = \left|\frac{t_{0}}{h_{0}} - \frac{t_{0} + b}{h_{0} + a}\right| \cdot \frac{|h_{0} + a|}{|a|} \lesssim \frac{|h_{0} + a|}{|a|} \cdot \delta^{\tau} \lesssim \frac{\delta^{\tau}}{|a|}, \end{displaymath}
and also
\begin{equation}\label{form25} \left|\frac{b}{a} - \frac{t_{0} + b}{h_{0} + a} \right| \lesssim \frac{\delta^{\tau}}{|a|} + \delta^{\tau} \lesssim \frac{\delta^{\tau}}{|a|}. \end{equation}
Next, using the fact that $f$ is Lipschitz, one deduces from \eqref{form16} that
\begin{displaymath} \left|h_{0}f\left(\frac{t_{0}}{h_{0}}\right) - h_{0}f\left(\frac{t_{0} + b}{h_{0} + a}\right) \right| \lesssim \delta^{\tau}, \end{displaymath}
so that (by the triangle inequality and the definition of $d_{h_{0}}$)
\begin{equation}\label{form18} \left|a f\left(\frac{t_{0} + b}{h_{0} + a}\right) + c \right| \lesssim \delta^{\tau} + |d_{h_{0}}(t_{0})| \lesssim \delta^{\tau}. \end{equation}
Now, if $b/a \in [0,1]$ (so that $f(b/a)$ is well-defined), one can argue as follows (I will come back to this simplifying assumption later). Combining \eqref{form25} and \eqref{form18}, and using the Lipschitz property of $f$,
\begin{equation}\label{algRelation} \left|af\left(\frac{b}{a}\right) + c \right| \lesssim \frac{\delta^{\tau}}{|a|}. \end{equation}
This is not very useful, if $|a|$ is small, say $|a| \leq \delta^{\tau/4}$. However, since $\tau/4 > \epsilon$, the condition that $|a| \leq \delta^{\tau/4}$ forces $|c| \geq \delta^{\epsilon}$ or $|b| \geq \delta^{\epsilon}$ by \eqref{sizeabc}. But if $|a| \leq \delta^{\tau/4}$, then also $|c| \lesssim \delta^{\tau/4}$ by \eqref{form18}, so it has to be the case that $|b| \geq \delta^{\epsilon}$. In this case
\begin{displaymath} \left|\frac{t_{0}}{h_{0}} - \frac{t_{0} + b}{h_{0} + a}\right| = \left|\frac{bh_{0} - at_{0}}{h_{0}(h_{0} + a)}\right| \geq \frac{|b|}{h_{0} + a} - \frac{|a|t_{0}}{h_{0}(h_{0} + a)}. \end{displaymath}
Since $t_{0}/h_{0} \leq 1$ -- by $t_{0} \in I_{h_{0}}$ -- the last expression is further bounded from below by $(|b| - |a|)/(h_{0} + a) \gtrsim \delta^{\epsilon}$, which is a contradiction in light of \eqref{form16}. The conclusion is that $|a| \geq \delta^{\tau/4}$ under the hypotheses of the lemma. Then, \eqref{algRelation} gives
\begin{equation}\label{form19} \left|d_{h}\left(\frac{h b}{a}\right) \right| = \left|af\left(\frac{b}{a}\right) + c \right| \leq C\delta^{3\tau/4} \end{equation}
for \textbf{every} $h \in [-1,1]$, and not just $h = h_{0}$ (the first equality in \eqref{form19} being simply the definition of $d_{h}$). This will have the consequence that the set $\{t : |d_{h}(t)| \leq \delta\}$ is contained in a single (short) interval around $t(h) = hb/a$. To see why, one has to show that $|d_{h}(t)|$ is large, when $|t - hb/a|$ is large. Assume, for example, that $t > hb/a$. Now, since the only zero of $d_{h}'(t)$ is at $t = hb/a$, the function $d_{h}'$ has constant sign on the interval $[hb/a,t]$. This sign could be determined from $a$ and $b$, but it does not affect the computations; I will simply assume that it is positive. So, using $f'' \geq \eta$ again,
\begin{align*} |d_{h}(t)| & \geq \left|d_{h}(t) - d_{h}\left(\frac{hb}{a}\right)\right| - C\delta^{3\tau/4} = \int_{hb/a}^{t} d'_{h}(r) \, dr - C\delta^{3\tau/4}\\
& \gtrsim \int_{hb/a}^{t} \left[\frac{r}{h} - \frac{r + b}{h + a} \right] \, dr - C\delta^{3\tau/4} = \int_{hb/a}^{t} \frac{ra - hb}{h(h + a)} \, dr - C\delta^{3\tau/4}\\
& = \int_{hb/a}^{t} \frac{(r - hb/a)a}{h(h + a)} \, dr - C\delta^{3\tau/4} \gtrsim \delta^{\tau/4}(t - hb/a)^{2} - C\delta^{3\tau/4}. \end{align*} 
This is far larger than $\delta^{3\tau/4}$, as soon as $\delta^{\tau/4}(t - hb/a)^{2} \geq 2C\delta^{3\tau/4}$, which happens as soon as $(t - hb/a) \geq \sqrt{2C}\delta^{\tau/4}$. So, since $|d_{h}(t)| \lessapprox \delta$ implies that $|d_{h}(t)| \leq \delta^{3\tau/4}$, this gives
\begin{equation}\label{form24} \{t \in I_{h} : |d_{h}(t)| \lessapprox \delta\} \subset [hb/a - c\delta^{\tau/4}, hb/a + c\delta^{\tau/4}] \end{equation}
for some large enough constant $c > 0$. So, the sets $\{t \in I_{h} : |d_{h}| \lessapprox \delta\}$, $\min\{h,h + a\} \geq \delta^{\epsilon}$, can be covered by a single short interval each, the midpoint of which depends smoothly on $t$. The rest of the argument follows the outline described earlier. Here are the details once more: using Fact \ref{comparison}, the inclusion \eqref{form24} yields a covering of $H(h) \cap \cC \cap (\cC + p)$ by a single disc of diameter $\lessapprox \delta^{\tau/4}$, centred at
\begin{displaymath} \operatorname{centre}(h) := \left(\frac{hb}{a},hf\left(\frac{b}{a}\right),h\right). \end{displaymath} 
Since $h \mapsto \operatorname{centre}(h)$ is Lipschitz with bounded constants (recalling that $0 \leq b/a \leq 1$), this means that $\cC \cap (\cC + p) \cap B(0,1)$ can be covered by $\lessapprox \delta^{-\tau/4}$ balls of diameter $\lessapprox \delta^{\tau/4}$, and the proof of the proposition is complete.

If $b/a \notin [0,1]$, the details are similar but messier. The extra assumption was not used before \eqref{algRelation}, so \eqref{form16}--\eqref{form18} hold. Also, $|a| \geq \delta^{\tau/4}$, which implies that
\begin{displaymath} \left|\frac{t_{0} + b}{h_{0} + a} - \frac{b}{a} \right| \lesssim \delta^{3\tau/4} \end{displaymath}
by \eqref{form25}. By definition of $I_{h_{0}}$, one has $(t_{0} + b)/(h_{0} + a) \in [0,1]$, so the fact that $b/a \notin [0,1]$ implies that either $0$ or $1$ has to lie between $(t_{0} + b)/(h_{0} + a)$ and $b/a$, at distance $\lesssim \delta^{3\tau/4}$ from both numbers. Assume, for instance, that $1$ has this property, so that $b/a > 1$. Now $t(h) := h + a - b$ will play the role of the special point $h b/a$ above (the reason being that $(t(h) + b)/(h + a) = 1$; if $0$ was picked instead of 1, the choice $t(h) = -b$ would be correct). The claim is that $\{t \in I_{h} : |d_{h}(t)| \lessapprox \delta\}$ is contained in a single short interval centred at $t(h)$. First, note that
\begin{displaymath} d_{h}(t(h)) = hf\left(\frac{h + a - b}{h}\right) - [(h + a)f(1) + c] \end{displaymath}
is well-defined for all $h \in [-1,1]$, since $h + a - b \geq 0$ and $a - b \leq 0$: the first condition is necessary for $I_{h} \neq \emptyset$ (an assumption I made at the beginning), and the second condition is equivalent to $b/a > 1$.

Then, using that $f$ is Lispschitz with bounded constants, combined with the fact that both numbers $b/a$ and $(t_{0} + b)/(h_{0} + a)$ are very close to one, and \eqref{form18}, 
\begin{align*} |d_{h}(t(h))| & \leq |h|\left|f\left(1 + \frac{a - b}{h}\right) - f(1) \right| + |af(1) + c|\\
& \lesssim |a - b| + \left|af\left(\frac{t_{0} + b}{h_{0} + a} \right) + c \right| + \delta^{3\tau/4} \lesssim \delta^{3\tau/4}. \end{align*}
This is the analogue of \eqref{form19}, and the proof can now be concluded in the same spirit as before; one should note $d_{h}'$ has constant sign on the whole interval $I_{h}$, because $d_{h}'$ could only have a zero at $hb/a \notin I_{h}$. I omit the rest of the details. \end{proof}

In the sequel, one is entitled to assume that
\begin{equation}\label{transversality} |d_{h}(t)| \leq \delta^{\tau} \quad \Longrightarrow \quad |d'_{h}(t)| \geq \delta^{\tau}, \end{equation} 
if $5\epsilon \leq \tau < 1/2$, and $t \in I_{h}$.

\begin{proof}[Proof of Lemma \ref{twoCones}] Assume that $a \leq 0$; this corresponds to the case that the vertex of the cone $C + p$ is above the $xy$-plane. The case with $a > 0$ is treated similarly. Recall that
\begin{displaymath} d_{h}'(t) = f'\left(\frac{t}{h}\right) - f'\left(\frac{t + b}{h + a}\right). \end{displaymath} 
Since $f'$ is a strictly increasing function, a quick computation gives
\begin{displaymath} d'_{h}(t) \geq 0 \quad \Longleftrightarrow \quad \frac{t}{h} \leq \frac{b}{a}. \end{displaymath}
So, either $hb/a \notin I_{h}$, and $d_{h}$ is strictly monotone on $I_{h}$, or then $hb/a \in I_{h}$, and the picture looks something like Figure \ref{fig3}. In particular, $d_{h}$ can have zero, one or two zeros on $I_{h}$. Observe that any zero $z$ of $d_{h}$ must satisfy 
\begin{equation}\label{form20} |z - hb/a| \gtrapprox \delta^{\tau}, \end{equation}
since otherwise 
\begin{displaymath} |d'_{h}(z)| = \left|f'\left(\frac{z}{h}\right) - f'\left(\frac{z + b}{h + a}\right)\right| \lesssim \left|\frac{z}{h} - \frac{z + b}{h + a}\right| = \left|\frac{a(z - hb/a)}{h(h + a)}\right| < \delta^{\tau}, \end{displaymath}
contrary to \eqref{transversality}. Also, it is good to keep in mind that if there are two zeros $z_{1},z_{2}$, then $hb/a \in I_{h}$, and $z_{1},z_{2}$ have to be located on different sides of $hb/a$.
\begin{figure}[h!]
\begin{center}
\includegraphics[scale = 0.6]{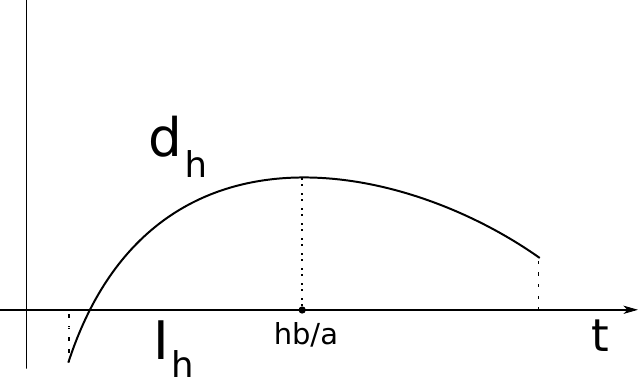}
\caption{The function $d_{h}$.}\label{fig3}
\end{center}
\end{figure} 
Now, define exactly two \emph{special points} $s_{1}(h),s_{2}(h) \in I_{h}$ 
\begin{equation}\label{form21} s_{1}(h) < s_{2}(h) \end{equation}
as follows. 

\begin{itemize}
\item If $d_{h}$ has a zero $z \leq hb/a$, then $s_{1}(h) = z$. Otherwise $s_{1}(h)$ is the left endpoint of $I_{h}$.
\item If $d_{h}$ has a zero $z \geq hb/a$, then $s_{2}(h) = z$. Otherwise $s_{2}(h)$ is the right endpoint of $I_{h}$.
\end{itemize}
Next, the plan is to argue that the set $Z^{h} := \{t \in I_{h} : |d_{h}(t)| \leq \delta^{1/2 + 2\tau}\}$ is contained in two intervals of length $\lesssim \delta^{1/2 + \tau}$, centred at the special points $s_{1}(h)$ and $s_{2}(h)$. First, consider the part
\begin{displaymath} Z^{h}_{\leq hb/a} := \{t \in I_{h} \cap (-\infty,hb/a] : |d_{h}(t)| \leq \delta^{1/2 + 2\tau}\}. \end{displaymath}
The function $d_{h}$ is strictly increasing on $I_{h} \cap (-\infty,hb/a]$, so it can have at most one zero on this interval. If such a zero exists, it is located at $s_{1}(h)$, and $Z^{h}_{\leq hb/a}$ is an interval $I$ around $s_{1}(h)$. Moreover, since $d'_{h}(t) \geq \delta^{\tau}$ for all $t \in I$ by \eqref{transversality}, the length of $I$ is bounded by $\ell(I) \leq 2\delta^{1/2 + \tau}$, as desired. 

If there is no zero of $d_{h}$ on $I_{h} \cap (-\infty,hb/a]$, then the left endpoint of $I_{h}$ is $s_{1}(h)$. Moreover, if $Z^{h}_{\leq hb/a}$ is non-empty, it is an interval $J$ containing $s_{1}(h)$. Once more, $d'_{h}(t) \geq \delta^{\tau}$ for all $t \in J$ by \eqref{transversality}, and this implies that $\ell(J) \leq 2\delta^{1/2 + \tau}$.

A similar argument shows that $\{t \in I_{h} \cap [hb/a,\infty) : |d_{h}| \leq \delta^{1/2 + 2\tau}\}$ is contained in a single interval of length $\lesssim \delta^{1/2 + \tau}$ around one of the special points on $I_{h} \cap [hb/a,\infty)$. Putting the two pieces together, $Z^{h}$ is indeed contained in two intervals of length $\lesssim \delta^{1/2 + \tau}$ centred at the special points. These intervals are denoted by $I_{1}(h)$ and $I_{2}(h)$ (not to be confused with $I_{h_{1}}$ and $I_{h_{2}}$).

Finally, it is time to examine how the special points $s_{1}(h)$ and $s_{2}(h)$ vary as functions of $h$. The desirable conclusion has the form
\begin{equation}\label{IFT} |h_{1} - h_{2}| \leq \delta^{1/2 + 2\tau + C\epsilon} \quad \Longrightarrow \quad |s_{2}(h_{1}) - s_{2}(h_{2})| \lessapprox \delta^{1/2}, \end{equation}
where $C$ is a large enough absolute constant; the same statement holds for $s_{1}(h)$, and the proof is slightly easier. So, assume that $h_{1},h_{2} \in [0,1]$ satisfy \eqref{heightAssumption}, and $|h_{1} - h_{2}| \leq \delta^{1/2 + 2\tau + C\epsilon}$. There are essentially two different cases.

First, it is possible that $s_{2}(h_{1})$ is the right endpoint of $I_{h_{1}}$, and $s_{2}(h_{2})$ is the right endpoint of $I_{h_{2}}$. Then $|s_{2}(h_{1}) - s_{2}(h_{2})| \leq |h_{1} - h_{2}| \leq \delta^{1/2 + 2\tau + C\epsilon}$, which is good.

The second possibility is that at least one of the points $s_{2}(h_{i})$ is a zero of $d_{h_{i}}$. Assume, for instance, that this is the case for $s_{2}(h_{1})$. Then, if $C$ is large enough, one can find a point $t \in I_{h_{2}}$ such that 
\begin{equation}\label{form26} |t - s_{2}(h_{1})| \leq \delta^{1/2 + \tau} \quad \text{and} \quad |d_{h_{2}}(t)| \leq \delta^{1/2 + 2\tau}. \end{equation}
Indeed, one should choose the point $t \in I_{h_{2}}$ closest to $s_{2}(h_{1})$ (so that if $s_{2}(h_{1}) \in I_{h_{2}}$, one would simply pick $t = s_{2}(h_{1})$). Such a point $t$ can be found even at distance $\lesssim \delta^{1/2 + 2\tau + C\epsilon}$ from $s_{2}(h_{1})$, because the endpoints of $I_{h}$ only move at a $1$-Lipschitz rate as $h$ varies. Then, the condition $|d_{h_{2}}(t)| \leq \delta^{1/2 + 2\tau}$ is a consequence of the easy fact that, under the assumption \eqref{heightAssumption}, the mapping $(h,t) \mapsto d_{h}(t)$ is Lipschitz in the following sense:
\begin{displaymath} |d_{h_{2}}(t)| = |d_{h_{1}}(s_{2}(h_{1})) - d_{h_{2}}(t)| \lessapprox \max\{|h_{1} - h_{2}|, |t - s_{2}(h_{1})|\} \lesssim \delta^{1/2 + 2\tau + C\epsilon}. \end{displaymath}
Choosing $C$ counters the implicit constants in the $\lessapprox$ notation and gives \eqref{form26}.

Now, the point $t$ lies in the set $Z^{h_{2}}$, so it is at distance $\lesssim \delta^{1/2 + \tau}$ from either one of the special points $s_{i}(h_{2})$, $i \in \{1,2\}$ by the previous considerations. This implies that
\begin{equation}\label{form22} |s_{2}(h_{1}) - s_{i}(h_{2})| \lesssim \delta^{1/2 + \tau}, \end{equation}
which looks like a little better than required: the surplus $\tau$ will be lost when proving that one can take $i = 2$. 

This is yet another case chase. First, assume that $s_{1}(h_{2})$ is a zero of $d_{h_{2}}$. Then $s_{1}(h_{2}) < h_{2}b/a$. Also, since $s_{2}(h_{1})$ is a zero of $d_{h}$, one has $s_{2}(h_{1}) > h_{1}b/a$, and indeed $s_{2}(h_{1}) - h_{1}b/a \gtrapprox \delta^{\tau}$ by \eqref{form20}. These facts show that
\begin{displaymath} s_{2}(h_{1}) - s_{1}(h_{2}) \geq s_{2}(h_{1}) - h_{2}b/a \gtrapprox \delta^{\tau} - |h_{1}b/a - h_{2}b/a| \geq \delta^{\tau}/2, \end{displaymath}
since $|h_{1} - h_{2}| \leq \delta^{1/2}$ and $b/a \leq s_{2}(h_{1})/h_{1} \leq 1$. In particular, \eqref{form22} is out of the question with $i = 1$, for small enough $\tau > 0$.

So, if $i = 1$, it has to be the case that $s_{1}(h_{2})$ is an endpoint of $I_{h_{2}}$ -- namely the left one. Now, if $s_{1}(h_{2}) < h_{2}b/a$, one can reason exactly as above to show that \eqref{form22} is impossible with $i = 1$. So, one only has to consider the case $s_{1}(h_{2}) \geq h_{2}b/a$. Recall that $s_{i}(h_{2})$ was at distance $\lesssim \delta^{1/2 + \tau}$ from a certain point $t$ with $|d_{h_{2}}(t)| \leq \delta^{1/2 + 2\tau}$ (chosen above \eqref{form22}), which implies that 
\begin{equation}\label{form23} |d_{h_{2}}(s_{i}(h_{2}))| \lessapprox \delta^{1/2 + \tau}. \end{equation}
So, if $i = 1$, the left endpoint $s_{1}(h_{2})$ of $I_{h_{2}}$ has to satisfy \eqref{form23}. Then, by \eqref{transversality}, there exists a zero $z \in I_{h_{2}}$ of $d_{h_{2}}$ with $z - s_{1}(h_{2}) \lessapprox \delta^{1/2}$ \textbf{unless} $I_{h_{2}}$ is too short for this to happen, namely $\ell(I_{h_{2}}) \lessapprox \delta^{1/2}$. In both cases, the special points of $I_{h_{2}}$ are necessarily close to each other, $|s_{1}(h_{2}) - s_{2}(h_{2})| \lessapprox \delta^{1/2}$, which -- combined with \eqref{form22} -- shows that $|s_{2}(h_{1}) - s_{2}(h_{2})| \lessapprox \delta^{1/2}$. Thus, the proof of \eqref{IFT} is complete.

Now, it is time to finish the argument, and cover $\cC \cap (\cC + p) \cap B(0,1)$ by a small number of balls of diameter $\lessapprox \delta^{1/2}$. At the risk of over-repeating an argument, I recall it once more: it has been established that 
\begin{displaymath} \{t \in I_{h} : |d_{h}(t)| \lessapprox \delta\} \subset Z^{h} \end{displaymath}
can be covered by two intervals of length $\lesssim \delta^{1/2}$ centred at $s_{1}(h)$ and $s_{2}(h)$. It follows that $H(h) \cap \cC \cap (\cC + p)$ can be covered by two discs of diameter $\lessapprox \delta^{1/2}$ centred at
\begin{displaymath} \operatorname{centre}_{1}(h) = \left(s_{1}(h),hf\left(\frac{s_{1}(h)}{h}\right),h\right) \text{ and } \operatorname{centre}_{2}(h) := \left(s_{2}(h),hf\left(\frac{s_{2}(h)}{h}\right),h\right). \end{displaymath}
Next, take any interval $H \subset [-1,1]$ such that $|H| \leq \delta^{1/2 + 2\tau + C\epsilon}$, and $h,h + a \geq \delta^{\epsilon}$ for $h \in H$. It follows from \eqref{IFT} (which is the "poor man's implicit function theorem") that 
\begin{displaymath} |\operatorname{centre}_{1}(h_{1}) - \operatorname{centre}_{1}(h_{2})| \lessapprox \delta^{1/2} \end{displaymath}
for all $h_{1},h_{2} \in H$. Consequently, $\cC \cap (\cC + p) \cap \R^{2} \times H$ can be covered by two balls of diameter $\lessapprox \delta^{1/2}$. Finally, one can split the set
\begin{displaymath} B_{+} := B(0,1) \cap \{(t,y,h) : h,h + a \geq \delta^{\epsilon}\} \end{displaymath}
into $\lessapprox \delta^{-1/2 - 2\tau}$, regions of the form $B_{+} \cap \R^{2} \times H$, where $H \subset [-1,1]$ satisfies the requirements above. It follows that $\cC \cap (\cC + p) \cap B_{+}$ can be covered by $\lessapprox \delta^{-1/2 - 2\tau}$ balls of diameter $\lessapprox \delta^{1/2}$, and the proof of Lemma \ref{twoCones} is complete.\end{proof}

\section{Proof of the Three cones lemma}\label{AppendixA}

Recall the statement:

\begin{lemma}[Three cones lemma]\label{threeCones} There is an absolute constant $c \in (0,1)$ such that the following holds for small enough $\delta > 0$. Let $C \subset \R^{3}$ be the cone $C = \{(x,y,z) : x^{2} + y^{2} = z^{2}\}$, and let $p,q \in B(0,1)$ be points satisfying
\begin{displaymath} \min\{|p|,|q|,|p - q|\} \geq \delta^{c}. \end{displaymath}
Write
\begin{displaymath} \cC_{0} := B(C,\delta), \quad \cC_{p} := p + \cC_{0}, \quad \cC_{q} := q + \cC_{0}. \end{displaymath}
Then the intersection
\begin{displaymath} (\cC_{0} \cap \cC_{p} \cap \cC_{q}) \cap B(0,1) \end{displaymath}
is contained in the $\delta^{c}$-neighbourhood of at most two of the lines on $C$. \end{lemma}

It seems likely that the lemma should hold with one line in place of two, but this way the proof is easier. The argument divides into several propositions. I will not write a heuristic overview of them here, because this would essentially be repeating the paragraph "Proof of Lemma \ref{threeCones}" at the end of the paper; in fact, I suggest the reader take a look there before starting with the technicalities.

In order to avoid writing '$B(0,1)$' all the time, the agreement is made that the \textbf{all} the sets below will be intersected with $B(0,1)$. Thus, any claim concerning, say, $\cC_{0} \cap \cC_{p}$ should be interpreted as a claim concerning $\cC_{0} \cap \cC_{p} \cap B(0,1)$ instead. A similar remark concerns the words \emph{taking $c,\delta > 0$ small enough}: these should be inserted anywhere in the text, where they appear needed but missing.

\begin{proposition}\label{task1} Suppose that either $p$ or $q$, say $p$, lies in the $\delta^{1/4}$-neighbourhood of $C$. Then $\cC_{0} \cap \cC_{p}$ (and in particular $\cC_{0} \cap \cC_{p} \cap \cC_{q}$) is contained in the $\delta^{c}$-neighbourhood of a single line on $C$.
\end{proposition}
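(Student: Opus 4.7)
The plan is to reduce everything to the Lorentzian quadratic form
\[
Q(x) := x_1^2 + x_2^2 - x_3^2, \qquad \langle u, v\rangle_L := u_1v_1 + u_2v_2 - u_3v_3,
\]
so that $C = \{Q = 0\}$. On $B(0,1)$ we have $|\nabla Q| \lesssim 1$, and away from the vertex $|\nabla Q(x)| \sim |x|$; in particular the hypotheses $x \in \cC_0$, $x \in \cC_p$ are equivalent on $B(0,1)$ to
\[
|Q(x)| \lesssim \delta \quad \text{and} \quad |Q(x - p)| \lesssim \delta,
\]
and the assumption that $p$ lies in the $\delta^{1/4}$-neighbourhood of $C$ gives $|Q(p)| \lesssim \delta^{1/4}$ (using $|p| \leq 1$). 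The points $x \in \cC_0 \cap \cC_p$ with $|x| \leq \delta^c$ are already within $\delta^c$ of any line through the origin, so I may assume $|x| \geq \delta^c$ throughout.

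The polarisation identity $Q(x - p) = Q(x) - 2\langle x, p\rangle_L + Q(p)$ then immediately yields
\[
|\langle x, p\rangle_L| \lesssim \delta^{1/4}.
\]
Pick $p' \in C$ with $|p - p'| \leq \delta^{1/4}$. Since $|p| \geq \delta^c$ and $c < 1/4$, also $|p'| \gtrsim \delta^c$, and the Cauchy--Schwarz bound $|\langle x, p - p'\rangle_L| \leq \sqrt{3}|x||p - p'| \lesssim \delta^{1/4}$ gives
\[
|\langle x, p'\rangle_L| \lesssim \delta^{1/4}.
\]
The line I propose to use is $L := \mathbb{R} p' \subset C$; in the exact limit ($\delta = 0$) one has $\langle x, p'\rangle_L = 0 = Q(x)$ and $p' \in C$, which forces $x \in L$ because the Lorentz-orthogonal complement of a null vector meets $C$ only in the line it spans.

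To quantify this, I apply a rotation in the $x_1x_2$-plane (which preserves $Q$) to put $p'$ in the form $p' = r(1, 0, \pm 1)$ with $r \gtrsim \delta^c$; consider $p' = r(1,0,1)$ (the other sign is symmetric). Then $\langle x, p'\rangle_L = r(x_1 - x_3)$, so
\[
|x_1 - x_3| \lesssim \delta^{1/4 - c}.
\]
Using the factorisation $Q(x) = (x_1 - x_3)(x_1 + x_3) + x_2^2$ together with $|x_1 + x_3| \leq 2$ and $|Q(x)| \lesssim \delta$, one gets
\[
x_2^2 \lesssim \delta + \delta^{1/4 - c} \lesssim \delta^{1/4 - c},
\qquad \text{hence} \qquad |x_2| \lesssim \delta^{1/8 - c/2}.
\]
The distance from $x$ to $L = \mathbb{R}(1,0,1)$ is $\sqrt{(x_1 - x_3)^2/2 + x_2^2} \lesssim \delta^{1/8 - c/2}$, which is at most $\delta^c$ provided $c \leq 1/16$ and $\delta$ is small.

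The only subtle point is the transition from neighbourhood statements to the level-set inequalities for $Q$ near the cone's vertex, but this is harmless: the small-$|x|$ case is handled by hand as noted above, and on $|x| \geq \delta^c$ the gradient $\nabla Q$ is bounded below, so no constants are lost beyond those already tracked. Choosing $c$ small enough (e.g.\ $c = 1/20$) completes the proof.
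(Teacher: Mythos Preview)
Your proof is correct and takes a genuinely different route from the paper's. The paper slices horizontally: writing $p \approx (0,r,r)$, each slice $H_t \cap \cC_0$ and $H_t \cap \cC_p$ is a thin annulus around the circles $S((0,0),|t|)$ and (approximately) $S((0,r),|r-t|)$, which are internally or externally tangent at $(0,t)$; a tangent-circles estimate (citing Wolff) then confines the intersection to a small disc centred at $(0,t,t) \in \spa(0,1,1)$, and varying $t$ traces out the line.

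Your approach is purely algebraic: the polarisation identity for the Lorentz form turns the two membership conditions plus $p \in C(\delta^{1/4})$ directly into $|\langle x, p'\rangle_L| \lesssim \delta^{1/4}$ for a null vector $p'$, and then the factorisation $Q(x) = (x_1-x_3)(x_1+x_3) + x_2^2$ finishes things in two lines. This avoids both the slicing and the appeal to the tangent-circles lemma, and makes the quantitative dependence on $c$ completely explicit. The paper's argument is more geometric and perhaps more suggestive of how one might proceed for more general conical surfaces (where no quadratic form is available), but for the specific cone $\{x_1^2 + x_2^2 = x_3^2\}$ your route is cleaner and shorter. One cosmetic point: you only use the forward implications $x \in \cC_0 \Rightarrow |Q(x)| \lesssim \delta$ etc., so ``equivalent'' overstates what is needed, and your closing remark about the vertex is not really an issue in the direction you use.
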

\begin{proof} Assume, without loss of generality, that $p$ lies in the $\delta^{1/4}$-neighbourhood of the line $\spa(0,1,1) \subset C$. Then $p = (0,r,r) + e$, where $|e| \leq \delta^{1/4}$ and $|r| \gtrsim \delta^{c}$. The idea is to study separately all the intersections $\cC_{0} \cap \cC_{p} \cap H_{t}$, $t \in \R$, where $H_{t}$ is the horizontal plane $H_{t} = \{(x,y,t) : (x,y) \in \R^{2}\} \subset \R^{3}$. Fix $t \in \R$ and make the temporary identification $H_{t} \cong \R^{2}$ (that is, drop off the third component from all vectors on $H_{t}$). Then $\cC_{0} \cap H_{t}$ and $\cC_{p} \cap H_{t}$ are contained in the $\delta$-neighbourhoods of the circles
\begin{displaymath} S_{0} = S((0,0),|t|) \subset \R^{2}  \quad \text{and} \quad S_{p} = S((p_{1},p_{2}),|p_{3} - t|) \subset \R^{2}, \end{displaymath} 
respectively. Since
\begin{displaymath} S((p_{1},p_{2}),|p_{3} - t|) = S((e_{1},r + e_{2}),|r + e_{3} - t|), \end{displaymath}
where $|(e_{1},e_{2},e_{3})| \leq \delta^{1/4}$, one may infer that the $\delta$-neighbourhood $B(S_{p},\delta)$ is contained in the $R\delta^{1/4}$-neighbourhood of the circle $S((0,r),|r - t|)$ for some large enough absolute constant $R \geq 1$. Now, the circles $S(0,|t|)$ and $S((0,r),|r - t|)$ are tangent (either internally or externally) at $(0,t)$, so the intersection of their $R\delta^{1/4}$-neighbourhoods is contained in a small disc $D$ centred at $(0,t)$. The diameter of $D$ depends, of course, on the size of $r$, but choosing $c,\delta > 0$ small enough and assuming $|r| \sim |p| \gtrsim \delta^{c}$ guarantees that $\operatorname{diam}(D) \leq \delta^{c}$. For more details, see the proof of \cite[Lemma 3.1]{Wo}.

Finally, observe that $(0,t,t)$ -- the midpoint of $D$ lifted from $\R^{2}$ to $H_{t}$ -- lies on the line $L = \spa(0,1,1) \subset C$. Repeating the argument above for every $t \in \R$ shows that $\cC_{0} \cap \cC_{p}$ is contained in the $\delta^{c}$-neighbourhood of $L$. \end{proof}

\begin{proposition}\label{metric} Let $A_{1}$ and $A_{2}$ be sets in a metric space $(X,d)$, and let $r,s > 0$. Then
\begin{displaymath} B(A_{1},r) \cap B(A_{2},s) \subset B(B(A_{1},r + s) \cap A_{2},s). \end{displaymath}
\end{proposition}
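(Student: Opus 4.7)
The plan is a direct unpacking of definitions followed by a single application of the triangle inequality; there is no real obstacle here.

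Given $x \in A(r) \cap B(s)$, the hypothesis reads $\dist(x, A) \leq r$ and $\dist(x, B) \leq s$. Using the second inequality, I pick a point $b \in B$ with $d(x, b) \leq s$. In the paper's intended applications the relevant sets are closed subsets of $\R^{3}$, so such an infimum is attained and a legitimate $b$ exists.

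The triangle inequality applied to the distance to $A$ then gives
\begin{equation*}
\dist(b, A) \;\leq\; d(b, x) + \dist(x, A) \;\leq\; s + r \;=\; r + s,
\end{equation*}
so that $b \in A(r+s)$, and hence $b \in A(r+s) \cap B$. Combined with $d(x, b) \leq s$, this places $x$ in the closed $s$-neighbourhood $[A(r+s) \cap B](s)$, which is exactly the claim.

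If one wishes to avoid the attainment issue in a fully general metric space, one replaces the single $b$ by a sequence $b_{\eta} \in B$ with $d(x, b_{\eta}) \leq s + \eta$, $\eta \searrow 0$; the same triangle-inequality computation yields $b_{\eta} \in A(r+s+\eta) \cap B$ at distance $\leq s + \eta$ from $x$, and the conclusion follows by a limiting argument (e.g.\ when $A(r+s) \cap B$ is closed). Either way, the proof is essentially one line.
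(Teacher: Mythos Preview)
Your proof is correct and essentially identical to the paper's: pick $b\in B$ with $d(x,b)\leq s$, use the triangle inequality to get $b\in A(r+s)\cap B$, and conclude $x\in [A(r+s)\cap B](s)$. The paper also tacitly assumes the infimum is attained (it simply says ``choose $a\in A$, $b\in B$ with $d(x,a)\leq r$, $d(x,b)\leq s$''), so your remark about closedness in the intended $\R^{3}$ applications is a fair clarification rather than a deviation.
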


\begin{proof} Let $x \in B(A_{1},r) \cap B(A_{2},s)$. Choose $a \in A_{1}$, $b \in A_{2}$ such that $d(x,a) \leq r$, $d(x,b) \leq s$. Then $b \in B(A_{1},r + s) \cap A_{2}$, so that $x \in B(B(A_{1},r + s) \cap A_{2},s)$. \end{proof}

\begin{proposition}\label{task2} There is an absolute constant $R \geq 1$ such that the intersections $\cC_{0} \cap \cC_{p}$ and $\cC_{0} \cap \cC_{q}$ are contained in the $R\delta^{1 - c}$-neighbourhoods of the planes
\begin{displaymath} V_{p} := \left\{(x,y,z) : \left((x,y,z) - \frac{(p_{1},p_{2},p_{3})}{2} \right) \cdot (p_{1},p_{2},-p_{3}) = 0\right\} \end{displaymath}
and
\begin{displaymath} V_{q} := \left\{(x,y,z) : \left((x,y,z) - \frac{(q_{1},q_{2},q_{3})}{2} \right) \cdot (q_{1},q_{2},-q_{3}) = 0\right\}. \end{displaymath}
\end{proposition}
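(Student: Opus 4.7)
The plan is to exploit the quadratic defining polynomial $F(x,y,z) := x^{2} + y^{2} - z^{2}$ of $C$, whose level sets make $\cC_{0}$, $\cC_{p}$, and $\cC_{q}$ analytically tractable.

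First I would show that for every $u \in B(0,2)$ which lies in the $\delta$-neighbourhood of $C$ one has $|F(u)| \lesssim \delta$. This is immediate from writing $u = u_{0} + v$ with $u_{0} \in C$ and $|v| \leq \delta$, and observing that $F$ is $C^{1}$ with $|\nabla F(\xi)| = 2|\xi| \lesssim 1$ on bounded sets, so $|F(u)| = |F(u) - F(u_{0})| \leq |\nabla F(\xi)| \cdot |v| \lesssim \delta$ by the mean value theorem.

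The crucial algebraic step is the identity
\begin{displaymath} F(u - p) = F(u) - 2\langle u, p^{\ast}\rangle + F(p), \qquad p^{\ast} := (p_{1},p_{2},-p_{3}), \end{displaymath}
which follows by expanding each square. Now fix $u \in \cC_{0} \cap \cC_{p} \cap B(0,1)$. From $u \in \cC_{0}$ I get $|F(u)| \lesssim \delta$, and since $u - p \in \cC_{0} \cap B(0,2)$ I also get $|F(u - p)| \lesssim \delta$. Subtracting via the identity yields
\begin{displaymath} \bigl|2\langle u, p^{\ast}\rangle - F(p)\bigr| \lesssim \delta. \end{displaymath}

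This inequality says precisely that $u$ lies in the $\delta/(2|p^{\ast}|)$-neighbourhood of the affine plane $\{u : \langle u, p^{\ast}\rangle = F(p)/2\}$. A one-line check, using $\langle p/2, p^{\ast}\rangle = (p_{1}^{2} + p_{2}^{2} - p_{3}^{2})/2 = F(p)/2$, identifies this plane with $V_{p}$. Since $|p^{\ast}| = |p| \geq \delta^{c}$ by hypothesis, the thickness of the neighbourhood is $\lesssim \delta/|p| \leq \delta^{1-c}$, which is the claimed bound. The reasoning for $\cC_{0} \cap \cC_{q}$ is verbatim the same with $q$ in place of $p$.

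I do not anticipate any real obstacle: the whole statement is a direct consequence of the fact that $F(u - p) - F(u)$ is \emph{affine} in $u$, so that the intersection of two translates of a level set of $F$ is automatically pinned to a hyperplane up to the error inherited from the neighbourhood thickness. The only thing requiring a touch of care is the constant in $|F(u)| \lesssim \delta$, but since the domain is fixed as $B(0,1)$ this constant is absolute, and the factor $1/|p|$ gained at the end — responsible for the $\delta^{1-c}$ rather than $\delta$ — is the only place where the separation hypothesis $|p| \geq \delta^{c}$ is invoked.
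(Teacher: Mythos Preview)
Your argument is correct and follows essentially the same route as the paper: both proofs exploit that $F(u-p)-F(u)$ is affine in $u$ to pin $\cC_{0}\cap\cC_{p}$ to the hyperplane $V_{p}$ up to an $O(\delta/|p|)$ error. The only cosmetic difference is that the paper first reduces to $C\cap\cC_{p}$ via Proposition~\ref{metric} and decomposes $\cC_{p}$ into exact translated cones, whereas you bypass this by bounding $|F(u)|$ and $|F(u-p)|$ directly with the mean value theorem; your packaging is slightly more economical.
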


\begin{proof} By the previous proposition, it suffices to prove the claim for the intersection $C \cap \cC_{p}$. Note that
\begin{displaymath} \cC_{p} = \bigcup_{r \in B(0,\delta)} p + r + C. \end{displaymath}
We will now prove that  $C \cap (p + r + C)$ is contained in the $R\delta^{1 - c}$-neighbourhood of $V_{p}$ for every $r \in B(0,\delta)$. Using the equation $C = \{(x,y,z) : x^{2} + y^{2} = z^{2}\}$, one can check that $C \cap (p + r + C)$ is contained in the plane
\begin{displaymath} \left\{\left((x,y,z) - \frac{(p_{1},p_{2},p_{3}) + (r_{1},r_{2},r_{3})}{2}\right) \cdot [(p_{1}, p_{2}, -p_{3}) + (r_{1},r_{2},-r_{3})] = 0\right\}. \end{displaymath}
Now, if $(x,y,z) \in B(0,1)$ satisfies the equation above, then it follows from $|r| \leq \delta$ and $p \in B(0,1)$ that
\begin{displaymath} \left| \left((x,y,z)  - \frac{(p_{1},p_{2},p_{3})}{2} \right) \cdot (p_{1},p_{2},-p_{3}) \right| \leq 3\delta. \end{displaymath}
Choose $(x',y',z') \in V_{p}$ such that the difference $(x,y,z) - (x',y',z')$ is parallel to $(p_{1},p_{2},-p_{3})$ (so $(x',y',z')$ is the orthogonal projection of $(x,y,z)$ into $V_{p}$). Then
\begin{align*} |(x,y,z) - (x',y',z')||p| & = |(x,y,z) - (x',y',z')||(p_{1},p_{2},-p_{3})|\\
& = |[(x,y,z) - (x',y',z')] \cdot (p_{1},p_{2},-p_{3})|\\
& = \left| \left((x,y,z)  - \frac{(p_{1},p_{2},p_{3})}{2} \right) \cdot (p_{1},p_{2},-p_{3}) \right| \leq 3\delta, \end{align*} 
proving that $(x,y,z)$ lies in the $(3\delta/|p|)$-neighbourhood of $V_{p}$. Since $|p| \geq \delta^{c}$ by hypothesis, the claim follows.
\end{proof}

For the remainder of the proof, fix $\tau \in (1/2,1)$.

\begin{proposition}\label{task3} Assume that $p,q \notin B(C,\delta^{1/4})$ and $\dist(p,\spa(q)) \leq \delta^{\tau}$. Then the intersection $B(V_{p},R\delta^{1-c}) \cap B(V_{q},R\delta^{1 - c})$ is empty. In particular, the previous lemma implies that
\begin{displaymath} \cC_{0} \cap \cC_{p} \cap \cC_{q} = \emptyset. \end{displaymath}
\end{proposition}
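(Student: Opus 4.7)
The plan is to argue by contradiction: suppose there exists $x \in V_{p}(R\delta^{1-c}) \cap V_{q}(R\delta^{1-c}) \cap B(0,1)$. Setting $n_{y} := (y_{1},y_{2},-y_{3})$ (so $|n_{y}| = |y|$) and $F(y) := y_{1}^{2}+y_{2}^{2}-y_{3}^{2} = y \cdot n_{y}$, the membership of $x$ in both slabs translates to
\begin{displaymath} |x \cdot n_{p} - F(p)/2| \leq R\delta^{1-c}|p| \quad \text{and} \quad |x \cdot n_{q} - F(q)/2| \leq R\delta^{1-c}|q|. \end{displaymath}
The hypothesis $\dist(p,\spa(q)) \leq \delta^{\tau}$ lets me write $p = tq + r$ with $|r| \leq \delta^{\tau}$, so $n_{p} = tn_{q} + n_{r}$ with $|n_{r}| \leq \delta^{\tau}$. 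The plan is to eliminate $x$ between the two inequalities and derive a numerical contradiction.

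Before doing the elimination, I would record several quantitative consequences of the lower bounds. Since $|r| \leq \delta^{\tau} \ll \delta^{c}$, the identities $tq = p-r$ and $(t-1)q = (p-q)-r$ combined with $|p|,|q|,|p-q| \in [\delta^{c},2]$ force, for small enough $\delta$,
\begin{displaymath} \tfrac{1}{2}\delta^{c} \leq |t||q| \leq 2 \quad \text{and} \quad \tfrac{1}{2}\delta^{c} \leq |t-1||q| \leq 3. \end{displaymath}
Separately, a brief geometric computation on the cone -- using the identity $\dist(y,C) = ||y_{3}|-\sqrt{y_{1}^{2}+y_{2}^{2}}|/\sqrt{2}$ from cylindrical coordinates and the factorisation $F(y) = (\sqrt{y_{1}^{2}+y_{2}^{2}}-|y_{3}|)(\sqrt{y_{1}^{2}+y_{2}^{2}}+|y_{3}|)$ -- shows $|F(y)| \sim \dist(y,C)\cdot|y|$, so $q \notin C(\delta^{1/4})$ yields $|F(q)| \gtrsim \delta^{1/4}|q|$.

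To perform the elimination, I would multiply the second displayed inequality by $t$ and subtract it from the first: since $x \cdot n_{p} = tx\cdot n_{q} + x\cdot n_{r}$, the $x \cdot n_{q}$ terms cancel, leaving only the residual $|x \cdot n_{r}| \leq \delta^{\tau}$; using $|p|+|t||q| \lesssim 1$ on the right side, this gives
\begin{displaymath} |F(p)/2 - tF(q)/2| \lesssim \delta^{1-c} + \delta^{\tau}. \end{displaymath}
The key algebraic identity is $F(p) = F(tq+r) = t^{2}F(q) + 2t\,n_{q}\cdot r + F(r)$, which rearranges to
\begin{displaymath} F(p)/2 - tF(q)/2 = t(t-1)F(q)/2 + t\,n_{q}\cdot r + F(r)/2; \end{displaymath}
the last two summands satisfy $|t\,n_{q}\cdot r| + |F(r)|/2 \leq |t||q||r| + |r|^{2} \lesssim \delta^{\tau}$, so absorbing them yields $|t||t-1||F(q)| \lesssim \delta^{1-c} + \delta^{\tau}$.

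Finally, multiplying the lower bounds $|t||q| \geq \delta^{c}/2$ and $|t-1||q| \geq \delta^{c}/2$ and combining with $|F(q)| \gtrsim \delta^{1/4}|q|$ and $|q| \leq 1$ gives $|t||t-1||F(q)| \gtrsim \delta^{1/4+2c}$. Therefore
\begin{displaymath} \delta^{1/4+2c} \lesssim \delta^{1-c} + \delta^{\tau}, \end{displaymath}
which for small $\delta$ fails as soon as $1/4+2c < 1-c$ and $1/4+2c < \tau$; both are guaranteed by fixing the absolute constant $c < \min\{1/4,(\tau-1/4)/2\}$, a positive quantity since $\tau > 1/2$. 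This contradicts the existence of $x$ and completes the proof. The main delicate point is that the error term $t\,n_{q}\cdot r$ threatens to be amplified by $|t|$ (possibly as large as $\delta^{-c}$); the remedy is to control it through $|t||q| \leq 2$ rather than through a separate estimate on $|t|$, which is why the hypothesis $|r| \leq \delta^{\tau}$ enters cleanly.
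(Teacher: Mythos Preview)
Your proof is correct and follows essentially the same approach as the paper's: both exploit that the normals $n_{p},n_{q}$ to $V_{p},V_{q}$ are nearly parallel (since $p \approx tq$), while the affine offsets of the two planes differ substantially because $q \notin C(\delta^{1/4})$ and $|p-q| \geq \delta^{c}$. The paper normalises to $\xi = q/|q|$ and evaluates the linear functional $(\cdot)\cdot(\xi_{1},\xi_{2},-\xi_{3})$ directly on each plane to obtain separation $\gtrsim \delta^{1/2}$; your version packages the same computation through the quadratic form $F$ and a contradiction argument, landing on the equivalent inequality $\delta^{1/4+2c} \lesssim \delta^{1-c}+\delta^{\tau}$.
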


\begin{proof} It suffices to show that the planes $V_{p}$ and $V_{q}$ intersected with $B(0,1)$ are at distance more than $3R\delta^{1-c}$ apart. Let $\xi = q/|q| \in S^{2}$, and write $p = r\xi + e$, where $|e| \leq \delta^{\tau}$, and $|r - |q|| \gtrsim \delta^{1/4}$ (for the latter inequality one uses the assumption $|p - q| \geq \delta^{c}$ with $c \leq 1/4$). Then the equation for the plane $V_{p}$ becomes
\begin{displaymath} \left\{(x,y,z) \cdot (r\xi_{1} + e_{1}, r\xi_{2} + e_{2}, -r\xi_{3} - e_{3}) = \frac{(r\xi_{1} + e_{1})^{2} + (r\xi_{2} + e_{2})^{2} - (r\xi_{3} + e_{3})^{2}}{2}\right\}. \end{displaymath}
This means that if $(x,y,z) \in V_{p}$, then
\begin{align*} (x,y,z) \cdot (\xi_{1},\xi_{2},-\xi_{3}) = r \cdot \frac{\xi_{1}^{2} + \xi_{2}^{2} - \xi_{3}^{2}}{2} \pm O(\delta^{\tau}) = r \cdot \frac{1 - 2\xi_{3}^{2}}{2} \pm O(\delta^{\tau}).    \end{align*} 
On the other hand, if $(x',y',z') \in V_{q}$, then
\begin{displaymath} (x',y',z') \cdot (\xi_{1},\xi_{2},-\xi_{3}) = |q| \cdot \frac{1 - 2\xi_{3}^{2}}{2}. \end{displaymath}
Thus, for $(x,y,z) \in V_{p}$ and $(x',y',z') \in V_{q}$, one finds that
\begin{displaymath} |[(x,y,z) - (x',y',z')] \cdot (\xi_{1},\xi_{2},-\xi_{3})| \geq |r - |q|| \cdot \frac{1 - 2\xi_{3}^{2}}{2} - O(\delta^{\tau}). \end{displaymath}
The assumption $q \notin B(C,\delta^{1/4})$ shows that $\dist(\xi,C) \geq \delta^{1/4}$. Observing that $C \cap S^{2} = \{(t_{1},t_{2},t_{3}) : t_{3} \in \{-1/\sqrt{2},1/\sqrt{2}\}\} \cap S^{2}$, this (and $\xi \in S^{2}$) implies further that 
\begin{displaymath} \dist(\xi_{3},\{-1/\sqrt{2},1/\sqrt{2}\}) \gtrsim \delta^{1/4}. \end{displaymath}
Since the derivative of the mapping $t \mapsto 1 - 2t^{2}$ stays bounded away from zero near $t = \pm 1/\sqrt{2}$, one may infer that $|(1 - 2\xi_{3}^{2})/2| \gtrsim \delta^{1/4}$. All in all, for small enough $\delta > 0$,
\begin{displaymath} |(x,y,z) - (x',y',z')| \geq |[(x,y,z) - (x',y',z')] \cdot (\xi_{1},\xi_{2},-\xi_{3})| \gtrsim \delta^{1/2}. \end{displaymath}
Assuming that $c < 1/2$, the term on the right hand side dominates $3R\delta^{1 - c}$ for small enough $\delta > 0$. This proves that $\dist(V_{p} \cap B(0,1),V_{q} \cap B(0,1)) \geq 3R\delta^{1 - c}$, and so the two $R\delta^{1 - c}$-neighbourhoods cannot intersect inside $B(0,1)$.
\end{proof} 

\begin{proposition}\label{task4} Assume that $\dist(p,\spa(q)) \geq \delta^{\tau}$. Then, for small enough $c,\delta > 0$, the intersection $B(V_{p},R\delta^{1 - c}) \cap B(V_{q},R\delta^{1 - c})$ is contained in the $\delta^{c}$-neighbourhood of the the line $V_{p} \cap V_{q}$.
\end{proposition}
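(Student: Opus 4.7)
The plan is to reduce the claim to a purely linear-algebraic fact: the intersection of the $R\delta^{1-c}$-neighbourhoods of two planes is contained in a tube of width $\lesssim \delta^{1-c}/\sin\alpha$ around the intersection line, where $\alpha$ is the dihedral angle between the planes. Thus the real work is to bound $\sin\alpha$ from below by a suitable power of $\delta$, using the hypothesis $\dist(p,\spa(q)) \geq \delta^{\tau}$.

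The angle $\alpha$ between $V_{p}$ and $V_{q}$ equals the angle between their normals $n_{p} := (p_{1},p_{2},-p_{3})$ and $n_{q} := (q_{1},q_{2},-q_{3})$. The key observation is that $n_{p} = Tp$ and $n_{q} = Tq$, where $T(x,y,z) := (x,y,-z)$ is the reflection across the $xy$-plane (this is precisely the symmetry that makes the planes $V_{p}$ tangent-plane-type objects associated with the cone $C$). Since $T$ is an isometry with $\det T = -1$, one has $Tp \times Tq = -T(p \times q)$, and in particular $|n_{p} \times n_{q}| = |p \times q|$. Combined with $|n_{p}| = |p|$ and $|n_{q}| = |q|$, this yields
\begin{displaymath}
\sin\alpha \;=\; \frac{|n_{p} \times n_{q}|}{|n_{p}|\,|n_{q}|} \;=\; \frac{|p \times q|}{|p|\,|q|} \;=\; \sin\theta,
\end{displaymath}
where $\theta$ denotes the angle between $p$ and $q$. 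Since $\dist(p,\spa(q)) = |p|\sin\theta$ and $|p| \leq 1$, the hypothesis immediately gives the crucial lower bound $\sin\alpha \geq \delta^{\tau}$, and in particular $V_{p}$ and $V_{q}$ are not parallel, so $L := V_{p} \cap V_{q}$ is a line.

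Next comes the standard ``thickened planes'' estimate. If $x \in V_{p}(r) \cap V_{q}(r)$ and $x'$ is the orthogonal projection of $x$ onto $V_{p}$, then $|x - x'| \leq r$ and the triangle inequality gives $\dist(x',V_{q}) \leq 2r$. Working in $V_{p}$, elementary geometry yields $\dist(x',V_{q}) = \dist(x',L)\sin\alpha$, so $\dist(x',L) \leq 2r/\sin\alpha$, and one more triangle inequality gives $\dist(x,L) \leq r + 2r/\sin\alpha \leq 3r/\sin\alpha$. Applying this with $r = R\delta^{1-c}$ and the angle bound $\sin\alpha \geq \delta^{\tau}$ delivers
\begin{displaymath}
V_{p}(R\delta^{1-c}) \cap V_{q}(R\delta^{1-c}) \;\subset\; L\bigl(3R\delta^{1-c-\tau}\bigr).
\end{displaymath}
Since $\tau \in (1/2,1)$ is fixed, one then chooses $c > 0$ so small that $c < (1-\tau)/2$, after which $3R\delta^{1-c-\tau} \leq \delta^{c}$ for all sufficiently small $\delta$.

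There is no substantial obstacle in the argument: once one spots the identity $|n_{p} \times n_{q}| = |p \times q|$, which is the algebraic manifestation of the symmetry of $C$ under $T$, the rest is affine geometry of two tilted planes. The only delicate point is consistency of the smallness of $c$: Propositions \ref{task1}--\ref{task4} each impose a constraint of the form $c \leq c_{0}(\tau)$, and the final absolute constant $c$ in the Three cones lemma must be taken to be the minimum of all of them (in particular, $c < (1-\tau)/2$ suffices here).
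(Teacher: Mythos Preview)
Your proof is correct and follows essentially the same route as the paper: both arguments bound the angle between the normals $n_{p}=(p_{1},p_{2},-p_{3})$ and $n_{q}=(q_{1},q_{2},-q_{3})$ from below by $\delta^{\tau}$ (the paper asserts this ``by assumption'' while you make the isometry $T$ explicit), and then deduce that $V_{p}(R\delta^{1-c})\cap V_{q}(R\delta^{1-c})\subset L(\lesssim\delta^{1-c-\tau})$, with the same final constraint $c<(1-\tau)/2$. The only cosmetic difference is that you carry out the thickened-planes estimate via projection onto $V_{p}$ and the dihedral angle, whereas the paper writes $y=l+x$ with $x\in L^{\perp}=\spa\{\bar p,\bar q\}$ and expands in the orthonormal basis $\{\bar p,\,(\bar q-(\bar p\cdot\bar q)\bar p)/|\bar q-(\bar p\cdot\bar q)\bar p|\}$.
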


\begin{proof} Translating if necessary, one may assume that the line $L = V_{p} \cap V_{q}$ passes through the origin. Let $y \in B(V_{p},R\delta^{1 - c}) \cap B(V_{q},R\delta^{1 - c})$. Then $y = l + x$, where $l \in L$ and $x \in L^{\perp} = \spa\{\bar{p},\bar{q}\}$. Here $\bar{p} = (p_{1},p_{2},-p_{3})/|p|$ and $\bar{q} = (q_{1},q_{2},-q_{3})/|q|$ are normal to $V_{p}$ and $V_{q}$, respectively. Then, since $\{\bar{p},(\bar{q} - (\bar{p} \cdot \bar{q})\bar{p})/|\bar{q} - (\bar{p} \cdot \bar{q})\bar{p}|\}$ is an orthonormal basis for $\spa\{\bar{p},\bar{q}\}$, one sees that
\begin{align*} |x| \sim |x \cdot \bar{p}| + \left|\frac{x \cdot (\bar{q} - (\bar{p} \cdot \bar{q})\bar{p})}{|\bar{q} - (\bar{p} \cdot \bar{q})\bar{p}|} \right| \leq |x \cdot \bar{p}| + \frac{|x \cdot \bar{q}|}{|\bar{q} - (\bar{p} \cdot \bar{q})\bar{p}|} + \frac{|x \cdot \bar{p}|}{|\bar{q} - (\bar{p} \cdot \bar{q})\bar{p}|}. \end{align*}
Here $|x \cdot \bar{p}|,|x \cdot \bar{q}| \leq R\delta^{1 - c}$, since, for instance, $|x \cdot \bar{p}| = \dist(y,V_{p}) \leq R\delta^{1 - c}$. On the other hand $|\bar{q} - (\bar{p} \cdot \bar{q})\bar{p}| \geq \delta^{\tau}$ by assumption, so one obtains $\dist(y,L) = |x| \lesssim \delta^{1 - c - \tau}$. Hence, the claim is true as long as $c < 1 - c - \tau$.

 \end{proof}

\begin{proposition}\label{task5} Let $L$ be an arbitrary line in $\R^{3}$. Then the intersection $B(L,\delta^{c}) \cap \cC_{0}$ is contained in the $\delta^{c^{2}/5}$-neighbourhood of at most two lines on $C$. 
\end{proposition}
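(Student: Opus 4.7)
The approach will exploit that $C = f^{-1}(0)$ for the quadratic $f(x,y,z) = x^{2}+y^{2}-z^{2}$: the restriction of $f$ to any line is a quadratic polynomial in the line parameter, and the rulings of $C$ are exactly those lines on which this polynomial vanishes identically. I would parametrize $L(t) = a + tv$ with $|v|=1$ and (WLOG) $a \in L \cap B(0,2)$, so $t \in [-4,4]$ suffices to cover $L \cap B(0,2)$. Setting $\phi(t) := f(L(t)) = \alpha t^{2} + 2\beta t + \gamma$, where $\alpha = v_{1}^{2}+v_{2}^{2}-v_{3}^{2}$, $\beta = a_{1}v_{1}+a_{2}v_{2}-a_{3}v_{3}$, $\gamma = f(a)$, one observes that $|\nabla f|$ is bounded on $B(0,3)$, so any $L(t) \in C(2\delta^{c}) \cap B(0,2)$ has $|\phi(t)| \lesssim \delta^{c}$. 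By Proposition \ref{metric} it is enough to cover $L \cap C(2\delta^{c})$ by the $(\delta^{c^{2}/5}/2)$-neighborhood of at most two rulings; re-thickening by $\delta^{c}$ then costs another $\delta^{c} \leq \delta^{c^{2}/5}/2$ for small $\delta$.

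Define $T := \{t \in [-4,4] : |\phi(t)| \leq C_{0}\delta^{c}\}$, a sub-level set of a quadratic, hence a disjoint union of at most two intervals $T_{1}, T_{2}$. For each $j$ I plan to produce one ruling $R_{j}$ so that $L(T_{j}) \subset R_{j}(\delta^{c^{2}/5}/2)$. The short case is easy: if $|T_{j}| \leq \delta^{c^{2}/5}/4$, one picks $t^{*} \in T_{j}$ with $L(t^{*}) \in C(2\delta^{c})$ (if no such $t^{*}$, the component contributes nothing), takes a nearest point $p^{*}$ on $C$, and sets $R_{j} := \spa(p^{*})$; the triangle inequality then gives $\dist(L(t), R_{j}) \leq |T_{j}| + 2\delta^{c} \leq \delta^{c^{2}/5}/2$ for all $t \in T_{j}$.

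The essential case is $|T_{j}| \geq \delta^{c^{2}/5}/4$. Picking $t_{-} < t_{0} < t_{+}$ in $T_{j}$ with $h := t_{+} - t_{0} = t_{0} - t_{-} \geq \delta^{c^{2}/5}/8$, the second and first difference identities $\phi(t_{+}) - 2\phi(t_{0}) + \phi(t_{-}) = 2\alpha h^{2}$ and $\phi(t_{+}) - \phi(t_{-}) = 2h(\alpha(t_{+}+t_{-}) + 2\beta)$ will, together with $|\phi(t_{\pm})|, |\phi(t_{0})| \lesssim \delta^{c}$ and $|t_{\pm}| \leq 4$, force $|\alpha|, |\beta|, |\gamma| \lesssim \delta^{c - 2c^{2}/5} =: \eta$. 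This is where the hard part begins: translating coefficient smallness into the geometric conclusion that $L$ itself is close to an actual ruling. From $|\alpha| \leq \eta$ and $|v|=1$, $v_{3}^{2}$ is within $\eta$ of $1/2$, so (WLOG) $v_{3} > 0$ and $v = \xi + O(\eta)$ for the unit vector $\xi := (\cos\theta/\sqrt{2},\, \sin\theta/\sqrt{2},\, 1/\sqrt{2}) \in C \cap S^{2}$, with $\theta$ the angle of $v_{\perp} = (v_{1},v_{2})$; the candidate ruling is $R_{j} := \spa(\xi)$.

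To control $\dist(a, R_{j})$ I would introduce $A := a_{1}\cos\theta + a_{2}\sin\theta$ and $B := -a_{1}\sin\theta + a_{2}\cos\theta$, and use the direct algebraic identity $\dist(a, R_{j})^{2} = |a|^{2} - (a \cdot \xi)^{2} = \tfrac{1}{2}((A-a_{3})^{2} + 2B^{2})$. The bound $|\beta| \leq \eta$ combined with $v \approx \xi$ yields $|A - a_{3}| \lesssim \eta$; then the factorization $\gamma = (A-a_{3})(A+a_{3}) + B^{2}$ forces $|B| \lesssim \sqrt{\eta}$. Hence $\dist(a, R_{j}) \lesssim \sqrt{\eta}$, and for $|t| \leq 4$,
\[ \dist(L(t), R_{j}) \leq \dist(a, R_{j}) + |t|\,|v - \xi| \lesssim \sqrt{\eta} \lesssim \delta^{c/2 - c^{2}/5}. \]
For $c \leq 5/4$ one has $c/2 - c^{2}/5 \geq c^{2}/5$, so for $\delta$ small this gives $L(T_{j}) \subset R_{j}(\delta^{c^{2}/5}/2)$. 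The square-root loss when extracting $B$ from $\gamma$ is precisely what forces the quadratic exponent $c^{2}/5$ in the statement, rather than something linear in $c$. Combining the two components and re-thickening by $\delta^{c}$ concludes the proof.
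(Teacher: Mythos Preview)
Your argument is correct, and it takes a genuinely different route from the paper's proof. The paper splits into two cases according to whether the direction $\xi$ of $L$ is close to $C$ or not: if $d(\xi,C)\le\delta^{c/4}$, it observes that $L(\delta^{c})\cap\cC_{0}$ sits inside the intersection of two nearby translates of $C(\delta^{c/5})$ and then invokes Proposition~\ref{task1} (the tangent-circles argument) to confine everything near a single ruling; if $d(\xi,C)\ge\delta^{c/4}$, it solves the quadratic for $L_{q}\cap C$ explicitly for each line $L_{q}$ in the tube $L(\delta^{c})$ and uses the $\tfrac12$-H\"older continuity of the quadratic-formula numerator in $q$, together with the $\gtrsim\delta^{c/4}$ lower bound on the denominator, to trap the intersection in two balls of radius $\lesssim\delta^{c/4}$.

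Your approach is more self-contained: you never appeal to Proposition~\ref{task1}, working instead with the single quadratic $\phi(t)=f(L(t))$ and its sub-level set. The dichotomy ``short interval versus long interval'' replaces the paper's angle dichotomy, and in the long case your second-difference argument recovers $|\alpha|,|\beta|,|\gamma|\lesssim\eta$ directly, after which the explicit identity $\dist(a,R_{j})^{2}=\tfrac12(A-a_{3})^{2}+B^{2}$ together with the factorisation $\gamma=(A-a_{3})(A+a_{3})+B^{2}$ does the geometric work that Proposition~\ref{task1} does in the paper. What your approach buys is transparency about the exponent: the square-root loss when extracting $B$ from $\gamma$ is exactly what produces $\delta^{c/2-c^{2}/5}$ and hence the $c^{2}/5$ in the statement, whereas in the paper this exponent arises somewhat opaquely from feeding $\delta^{c/5}$ into Proposition~\ref{task1}. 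The paper's route, on the other hand, recycles earlier machinery and keeps each step short.
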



\begin{proof}Let $L$ be the line $L = \{r\xi + p : r \in \R\}$, where $\xi \in S^{2}$ and $p \in \R^{3}$. Assume first that $\xi$ forms a small angle with one of the lines on $C$, say $\dist(\xi,C) \leq \delta^{c/4}$. Then, if $q \in B(L,\delta^{c})$, one may conclude that $B(L,\delta^{c}) \subset q + B(C,\delta^{c/5})$ for small enough $\delta > 0$. Thus, assuming that $B(L,\delta^{c})$ intersects $\cC_{0}$ at even one point, say $q \in \cC_{0}$, then certainly $B(L,\delta^{c}) \cap \cC_{0} \subset (q + B(C,\delta^{c/5})) \cap B(C,\delta^{c/5})$. But now Proposition \ref{task1} is applicable and shows that $B(C,\delta^{c/5}) \cap (q + B(C,\delta^{c/5}))$ is contained in the $\delta^{c^{2}/5}$-neighbourhood of a single line on $C$.

Next, assume that $\dist(\xi,C) \geq \delta^{c/4}$. By Proposition \ref{metric}, it suffices to prove that $B(L,\delta^{c}) \cap C$ is contained in the union of two small balls centred at points on $C$. The neighbourhood $B(L,\delta^{c})$ is the union of the lines $L_{q} := \{r\xi + q : r \in \R\}$, where $q \in p + B(0,\delta^{c})$. We may explicitly find the (at most) two points on $L_{q} \cap C$, since such points must satisfy
\begin{displaymath} (r\xi_{1} + q_{1})^{2} + (r\xi_{2} + q_{2})^{2} - (r\xi_{3} + q_{3})^{2} = 0, \end{displaymath}
amounting to
\begin{displaymath} r = \frac{-2(\xi_{1}q_{1} + \xi_{2}q_{2} - \xi_{3}q_{3})\pm\sqrt{4(\xi_{1}q_{1} + \xi_{2}q_{2} - \xi_{3}q_{3})^{2} - 4(\xi_{1}^{2} + \xi_{2}^{2} - \xi_{3}^{2})(q_{1}^{2} + q_{2}^{2} - q_{3}^{2})}}{2(\xi_{1}^{2} + \xi_{2}^{2} - \xi_{3}^{2})}. \end{displaymath}
The denominator is $\gtrsim \delta^{c/4}$, by the assumption $\dist(\xi,C) \geq \delta^{c/4}$. The numerator, on the other hand is $1/2$-Hölder continuous with respect to moving the point $q = (q_{1},q_{2},q_{3})$ around. So, when $q$ ranges in $p + B(0,\delta^{c})$, the solutions $r = r(q)$ can vary only inside intervals of length $\lesssim \delta^{-c/4} \cdot \delta^{c/2} = \delta^{c/4}$. This implies that the intersection $B(L,\delta^{c}) \cap C$ is contained in two balls of radius $\lesssim \delta^{c/4}$. \end{proof} 

\begin{proof}[Proof of Lemma \ref{threeCones}] The lemma follows by combining the propositions. If either $p$ or $q$ lies very close to the surface $C$, one is instantly done by Proposition \ref{task1}. If both points lie far from $C$, then Proposition \ref{task3} implies that either $\cC_{0} \cap \cC_{p} \cap \cC_{q}$ is empty, or $p$ does not lie close to the line spanned by $q$. In the latter case, the intersection $\cC_{0} \cap \cC_{p} \cap \cC_{q}$ is contained in the small neighbourhood of a single line in $\R^{3}$, according to Proposition \ref{task4}. Finally, by Proposition \ref{task5}, the intersection of any such neighbourhood with $\cC_{0}$ is contained in the neighbourhood of at most two lines on $C$, as claimed. \end{proof}

\end{document}